\def\dashint{\,\ThisStyle{\ensurestackMath{%
  \stackinset{c}{.2\LMpt}{c}{.5\LMpt}{\SavedStyle-}{\SavedStyle\phantom{\int}}}%
  \setbox0=\hbox{$\SavedStyle\int\,$}\kern-\wd0}\int}
\def\ddashint{\,\ThisStyle{\ensurestackMath{%
  \stackinset{c}{.2\LMpt}{c}{.5\LMpt+.2\LMex}{\SavedStyle-}{%
    \stackinset{c}{.2\LMpt}{c}{.5\LMpt-.2\LMex}{\SavedStyle-}{%
      \SavedStyle\phantom{\int}}}}\setbox0=\hbox{$\SavedStyle\int\,$}\kern-\wd0}\int}
\theoremstyle{plain}
\begin{document}


\theoremstyle{plain}
\newtheorem{theorem}{Theorem} [section]
\newtheorem{corollary}[theorem]{Corollary}
\newtheorem{lemma}[theorem]{Lemma}
\newtheorem{proposition}[theorem]{Proposition}
\newtheorem{example}[theorem]{Example}


\theoremstyle{definition}
\newtheorem{definition}[theorem]{Definition}
\theoremstyle{remark}
\newtheorem{remark}[theorem]{Remark}

\def\RR{{\mathbb R}}
\def\NN{{\mathbb N}}
\def\N{{\mathbb N}}
\def\LE{\mathcal{E}}
\def\gabo{\textcolor{red}}
\def\fran{\textcolor{magenta}}
\def\julio{\textcolor{blue}}

\numberwithin{theorem}{section}
\numberwithin{equation}{section}

\title[Coupling local and nonlocal equations]{Coupling local and nonlocal equations with Neumann boundary conditions.}

\thanks{G.A. partially supported by ANPCyT under grant PICT 2018 - 3017  (Argentina). \newline
\indent F.M.B. partially supported by ANPCyT under grant PICT 2018 - 3017 (Argentina).
 \newline
\indent J.D.R. partially supported by 
CONICET grant PIP GI No 11220150100036CO
(Argentina), PICT 2018 - 3183 (Argentina) and UBACyT grant 20020160100155BA (Argentina).}

\author[G. Acosta, F. Bersetche, J.D. Rossi]{Gabriel Acosta, Francisco Bersetche, Julio D. Rossi} 
\address{Departamento  de Matem\'atica, FCEyN, Universidad de Buenos Aires \&
IMAS CONICET,  Pabell\'on I, Ciudad Universitaria (1428),
Buenos Aires, Argentina.}
\email{gacosta@dm.uba.ar, fbersetche@dm.uba.ar, jrossi@dm.uba.ar}

%
%

\keywords{Local equations, nonlocal equations, Neumann boundary conditions
\\
\indent 2020 {\it Mathematics Subject Classification:}
35R11, 
45K05, 
47G20, 
}
\date{}

\begin{abstract}
We introduce two different ways of coupling local and nonlocal equations
with Neumann boundary conditions in such a 
way that the resulting model is naturally associated with an energy functional. 
For these two models we prove that there is a minimizer of the resulting energy that
is unique modulo adding a constant. 
\end{abstract}

\maketitle


\section{Introduction} 

Nonlocal models can be used to describe phenomena 
(including
problems characterized by long-range interactions and discontinuities)
that can not be well represented by classical Partial Differential Equations, PDE. 
For instance, long-range interactions effectively
describe anomalous diffusion and crack
formation results in material models. 
The fundamental difference between nonlocal models and classical local models  
is the fact that the latter only involve differential operators (local equations), whereas the
former rely on integral operators (nonlocal equations). For general references on nonlocal models 
and their applications to elasticity, population dynamics, 
image processing, etc, we refer to
to \cite{BCh,Bere,CaRo,CF,ChChRo,CERW,Cortazar-Elgueta-Rossi-Wolanski,DiPaola,F,Hutson,MenDu,Sil,Sil1,
Sil2,Strick,W,Z} and the book \cite{ElLibro}.

It is often the case that nonlocal effects occur only in
some parts of the domain, whereas, in the remaining parts, the system can be accurately
described by a local equation. The goal of coupling local and nonlocal models is to
combine a local equation (a PDE) with a nonlocal one (an integral equation) acting in different
parts of the domain,
under the assumption that the spacial location of local and nonlocal effects can be identified
in advance. 
In this
context, one of the challenges of a coupling strategy is to provide a mathematically 
consistent formulation.

Along this work we consider an open bounded domain $\Omega \subset \mathbb{R}^N$. 
In our models it is assumed that $\Omega$ is 
divided into two disjoint subdomains; a local region that we will denote by $\Omega_\ell$ and
a nonlocal region, $\Omega_{n\ell}$. Thus we have $\Omega_\ell,\Omega_{n\ell}\subset \Omega\subset \mathbb{R}^N$
with 
$\Omega = (\overline{\Omega}_\ell \cup \overline{\Omega}_{n\ell})^{\circ}$.
 Our main goal in this paper is to introduce two different ways of coupling a local
 classical PDE in $\Omega_{\ell}$ with a nonlocal equation in $\Omega_{n\ell}$
 in such a way that the resulting problem is naturally associated with an energy
 functional that is invariant under the addition of a constant (as is the usual case for
 Neumann boundary conditions in the literature). This paper is a
 continuation of \cite{ABR} where we tackled the Dirichlet case. 
 
 Let us first recall some well-known facts: for the classical Laplacian (a local operator) with homogeneous 
 Neumann boundary conditions the model problem read as
 \begin{equation}
\label{eq:main.Neumann.local.Lapla}
\begin{cases}
\displaystyle \Delta u (x) = f(x), \ &x\in \Omega_\ell,
\\[5pt]
\displaystyle \frac{\partial u}{\partial \eta} (x)=0,\qquad & x\in \partial \Omega_\ell.
\end{cases}
\end{equation}
Here, $u$ is the unknown and $f$ is an external source. 
Associated to the problem we have the natural energy 
\begin{equation}
\label{energy.Lapla.intro}
F_{\ell}(u) = \frac12 \int_{\Omega_\ell} |\nabla u (x)|^2 \, dx - \int_{\Omega_\ell} f(x) u(x) \, dx
\end{equation}
that is well posed in the space $H_\ell=\{u \in H^1 (\Omega_\ell)  :  \int_{\Omega_\ell} u (y) \, dy =0 \}$.  
Notice that we need to impose that $\int_{\Omega_{\ell}} f(x) \, dx=0$ in order to have existence 
of solutions to \eqref{eq:main.Neumann.local.Lapla} 
and in this case we get existence and uniqueness for \eqref{eq:main.Neumann.local.Lapla}
modulo an additive constant: there is a unique solution to \eqref{eq:main.Neumann.local.Lapla}
with $\int_{\Omega_{\ell}} u(x) \, dx=0$ (that is obtained as a minimizer of the energy $F_{\ell}$ in $H_\ell$)
and any other solution can be obtained by adding a constant. 
For the proofs we refer to the textbook \cite{evans}.

For a nonlocal counterpart of \eqref{eq:main.Neumann.local.Lapla}
in $\Omega_{n\ell}$,
we need to introduce a nonnegative kernel $J:\RR^N \mapsto \RR$ and then we 
consider as a nonlocal analogous to \eqref{eq:main.Neumann.local.Lapla} the following equation,
\begin{equation}
\label{eq:main.Neumann.nonlocal.intro}
\displaystyle
2 \int_{\Omega_{n\ell}} J(x-y)(u(y)-u(x))\, dy = f(x),
\quad x \in \Omega_{n\ell}.
\end{equation}
Assuming that the kernel is symmetric, $J(z) = J(-z)$, we have the associated energy functional
\begin{equation}
\label{energy.no-loc.intro}
F_{n\ell}(u) = \frac12 \int_{\Omega_{n\ell}} \int_{\Omega_{n\ell}} J(x-y)|u(y)-u(x)|^2\, dy \, dx - 
\int_{\Omega_{n\ell}} f(x) u(x) \, dx.
\end{equation}
When the kernel is in $L^1$ this functional can be
considered in the space $H_{n\ell} = \{ u \in L^2 (\Omega_{n\ell})  : \int_{\Omega_{n\ell}} u(x) \, dx=0 \}$. 
Like for the local case, we need to assume that $\int_{\Omega_{n\ell}} f(x) \, dx=0$ and 
again we have existence and uniqueness for solutions to \eqref{eq:main.Neumann.nonlocal.intro}
modulo a constant (there is a unique solution to \eqref{eq:main.Neumann.local.Lapla}
with $\int_{\Omega_{n\ell}} u(x) \, dx=0$ and it is obtained as a minimizer of $F_{n\ell}$ in $H_{n\ell}$), see \cite{ElLibro}. 

Our main goal here is to present two different ways of coupling local (in $\Omega_\ell$) and nonlocal models
(in $\Omega_{n\ell}$ in 
in such a way that the resulting problem (in the whole $\Omega$) has the same properties as the 
previous two Neumann problems; the problem is naturally associated with an 
energy functional and it has a unique solution (up to an additive constant) when
the external source is such that $\int_\Omega f(x)\, dx =0$.

{\subsection{First model. Volumetric couplings.} Let us present our first model coupling local and nonlocal equations in 
two disjoint subdomains $\Omega_\ell,\Omega_{n\ell}\subset \Omega\subset \mathbb{R}^N$.
For
$u: \Omega \mapsto \mathbb{R}$,
we consider the local/nonlocal energy
\begin{equation} \label{Jyvaskyla}
\begin{array}{l}
\displaystyle 
E_{Neu}(u):=\int_{\Omega_\ell} \frac{|\nabla u (x)|^2}{2} \, dx 
+ \frac{1}{2}\int_{\Omega_{n\ell}}\int_{\Omega_{n\ell}} J(x-y)|u(y)-u(x)|^2\, dy \, dx
\\[10pt]
\displaystyle \qquad\qquad  \qquad
+ \frac{1}{2}\int_{\Omega_{n\ell}}\int_{\Omega_\ell} G(x,y)|u(y)-u(x)|^2\, dy\,  dx
- \int_\Omega f (x) u (x) \, dx,
\end{array}
\end{equation}
and we look for critical points (minimizers) of this energy and  the corresponding 
equations that they satisfy. 

In the functional \eqref{Jyvaskyla} we can identify the local part of the energy in $\Omega_\ell$,
\begin{equation} \label{local.part}
\int_{\Omega_\ell} \frac{|\nabla u(x)|^2}{2}\, dx,
\end{equation}
the nonlocal part, acting in $\Omega_{n\ell}$, 
\begin{equation} \label{nonlocal.part}
 \frac{1}{2}\int_{\Omega_{n\ell}}\int_{\Omega_{n\ell}} J(x-y)|u(y)-u(x)|^2\, dy \, dx,
\end{equation}
a coupling term, that involves integrals in $\Omega_{\ell}$ and in $\Omega_{n\ell}$ and
a different kernel $G(x,y)$,
\begin{equation}\label{coupling1.part}
 \frac{1}{2}\int_{\Omega_{n\ell}}\int_{\Omega_\ell} G(x,y)|u(y)-u(x)|^2\, dy\, dx
\end{equation}
(here $G:\Omega_\ell \times \Omega_{n\ell} \mapsto \mathbb{R}$ is assumed to be nonnegative but not necessarily symmetric; notice that the two variables belong to different sets)
and, finally, the term that involves the external source
\begin{equation} \label{source.part}
\int_\Omega f(x) u(x)\, dx.
\end{equation}

Now, let us state our hypothesis on the  involved domains and kernels. 
 With $J:\RR^N \mapsto \RR$ we denote a nonnegative measurable function such that 
 \begin{itemize}
  \item[(J1)]\emph{Visibility:} there exist $\delta>0$
and $C>0$ such that $J(z)>C$ for all $z$ such that $\|z\|\le 2\delta$.
\item[(J2)] \emph{Compactness:} the convolution type operator 
$T_J(u) (x)=\int_{\Omega_{n\ell}} J(x-y)u (y) \, dy$, defines a compact operator in $L^2(\Omega_{n\ell})$.
 \end{itemize}
In nonlocal models, $J$ is a kernel that encodes the effect of a general \emph{volumetric} nonlocal  interaction
inside the nonlocal part of the domain. 
Condition $(J1)$ guarantees the influence of nonlocality within an horizon of size at least $2\delta$ while $(J2)$ is a technical requirement fulfilled, for  instance, by continuous 
kernels, characteristic functions, or even for $L^2$ kernels, (this holds since these kernels produce Hilbert-Schmidt operators of the form 
$u \mapsto T(u)(x):=\int k(x,y) u(y) {\rm d}y$ that are compact if $k\in L^2$, see Chapter VI in \cite{brezis}). 
We also need to introduce a connectivity condition. 

 \begin{definition}\label{def:deltaconnected}
 {\rm We say that an open set $D \subset \mathbb{R}^N$ is $\delta-$connected , with $\delta\ge 0$, if it can not be written as a disjoint union of two
 (relatively) open nontrivial sets 
 that are at distance greater or equal than $\delta.$}
\end{definition}

 Notice that if a set $D$ is $\delta$ connected then it is $\delta'$ connected for any $\delta'\ge \delta$. From Definition \ref{def:deltaconnected}, 
we notice that $0-$connectedness agrees with the classical notion of being connected (in particular, open connected sets are $\delta-$connected).
Definition \ref{def:deltaconnected} can be written in an equivalent way: an open set $D$ is $\delta-$connected if given 
  two points $x,y\in D$,  there exists a finite number of points $x_0,x_1,...,x_n \in D$ 
 such that $x_0=x$,
 $x_n=y$ and $dist(x_i,x_{i+1}) <\delta$. 
 
Informally, $\delta$-connectedness combined with $(J1)$ says that the effect of nonlocality can travel beyond the horizon $2\delta$ through the whole domain.

Now we can write the following assumptions on the local/nonlocal domains: 
 \begin{enumerate}
 \item[(1)] $ \Omega_\ell$ is connected and smooth ($ \Omega_\ell$ has Lipschitz boundary),
 \item[(2)] $ \Omega_{n\ell} $ is $\delta-$connected.
 \end{enumerate}
 
Concerning the kernel involved in the coupling term, $G$, that encodes the 
interactions of $\Omega_{n\ell}$ with $\Omega_\ell$ we assume that it is given by nonnegative and measurable function 
$G: \Omega_\ell \times \Omega_{n\ell} \mapsto \mathbb{R},$
such that
 \begin{itemize}
  \item[$(G1)$]  there exist $\delta >0$
and $C>0$ such that for any $(x,y)\in \Omega_\ell \times \Omega_{n\ell}$,  $G(x,y)>C$  if $\|x-y\|\le 2\delta$.
 \end{itemize}
Finally, in order to avoid trivial couplings, we impose that  $\Omega_{\ell}$ and $\Omega_{n\ell}$ need to be closer than the horizon of the kernel involved in the coupling, we assume that
\begin{enumerate}
\item[$(P1)$]  $dist ( \Omega_{\ell},\Omega_{n\ell})<\delta $.
\end{enumerate}

\begin{remark} \label{rem.intro.dom}
Our results are valid for more general domains. In fact, we assumed that $\Omega_\ell$ is connected and that
$\Omega_{n\ell}$ is $\delta-$connected with $dist(\Omega_{\ell},\Omega_{n\ell})<\delta$, but we can also handle
the case in which $\Omega_\ell$ has several connected components and $\Omega_{n\ell}$ has several 
$\delta-$connected components as long as they are close between them. 
We prefer to state our results under conditions (1), (2), $(G1)$ and $(P1)$ just to simplify the presentation. 
\end{remark}

Now, with all these conditions at hand we go back to our energy functional \eqref{Jyvaskyla} and
look for possible critical points (minimizers). 
 Here, as usual in Neumann problems, we have to assume that
\begin{equation} \label{condicion}
\int_\Omega f (x) \, dx =0,
\end{equation}
and look for minimizers in the natural function space
$$
H_{Neu} = \left\{ u \in L^2 (\Omega): \, u|_{\Omega_\ell} \in H^1 (\Omega_\ell), \int_\Omega u (x) \, dx =0 \right\}.
$$

Let us state our first theorem.

\begin{theorem} \label{teo.1.intro} Assume that the kernels and the domains verify $(J1)$, $(J2)$, $(1)$, $(2)$, $(G1)$ and
$(P1)$. 
Given $f \in L^2 (\Omega)$ with $\int_\Omega f =0$ there exists a unique minimizer of $E_{Neu}(u)$ in $H_{Neu}$.
The minimizer of $E_{Neu}(u)$ in $H_{Neu}$ is a weak solution to the problem,
\begin{equation}
\label{eq:main.Neumann.local.intro}
\begin{cases}
\displaystyle - f(x)=\Delta u (x) + \int_{\Omega_{n\ell}} G(x,y)(u(y)-u(x))\, dy,\ &x\in \Omega_\ell,
\\[10pt]
\displaystyle \frac{\partial u}{\partial \eta} (x)=0,\qquad & x\in \partial \Omega_\ell, 
\end{cases}
\end{equation}
in the local domain $\Omega_\ell$
together with a nonlocal equation with a source in $\Omega_{n\ell}$,
\begin{equation}
\label{eq:main.Neumann.nonlocal.intro.888}
\displaystyle - f(x) = 2 \int_{\Omega_{n\ell}}\!\! J(x-y)(u(y)-u(x))\, dy,
+ \int_{\Omega_\ell} G(x,y)(u(y)-u(x))\, dy ,\qquad x \in \Omega_{n\ell}.
\end{equation}
\end{theorem}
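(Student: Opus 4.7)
The plan is to apply the direct method of the calculus of variations to $E_{Neu}$ on the Hilbert space $H_{Neu}$. The three quadratic integrals in \eqref{Jyvaskyla} together form a nonnegative, convex, continuous functional $Q$ on $H_{Neu}$, hence sequentially weakly lower semi-continuous, while $u\mapsto \int_\Omega f u$ is continuous in the weak $L^2$-topology since $f\in L^2(\Omega)$. Existence of a minimizer therefore reduces to coercivity, and coercivity reduces in turn to a Poincar\'e-type inequality
\begin{equation}\label{plan:poincare}
\|u\|_{L^2(\Omega)}^2 \leq C\, Q(u), \qquad u \in H_{Neu}.
\end{equation}

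Proving \eqref{plan:poincare} is the crux of the matter, and I expect it to be the main technical obstacle. I would carry it out by a compactness-contradiction argument that calls on every hypothesis. Suppose there is a sequence $u_n\in H_{Neu}$ with $\|u_n\|_{L^2(\Omega)}=1$ and $Q(u_n)\to 0$. The boundedness of $\int_{\Omega_\ell}|\nabla u_n|^2$ together with the $L^2$-bound gives an $H^1(\Omega_\ell)$-bound on $u_n|_{\Omega_\ell}$, so by $(1)$ and Rellich-Kondrachov a subsequence converges strongly in $L^2(\Omega_\ell)$ to some $u_\ell$ with $\nabla u_\ell=0$; connectedness forces $u_\ell\equiv c_1$ for some constant. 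For the restriction $u_n|_{\Omega_{n\ell}}$, the compactness of $T_J$ from $(J2)$ combined with the algebraic identity
\[
\int_{\Omega_{n\ell}}\!\int_{\Omega_{n\ell}} J(x-y)|u_n(y)-u_n(x)|^2\,dy\,dx = 2\!\int_{\Omega_{n\ell}}\! j(x)\,u_n(x)^2\,dx - 2\!\int_{\Omega_{n\ell}}\! u_n\,T_J u_n\,dx,
\]
where $j(x):=\int_{\Omega_{n\ell}} J(x-y)\,dy$, allows one to extract a further subsequence converging strongly in $L^2(\Omega_{n\ell})$ to some $u_{n\ell}$. Weak lower semi-continuity of the $J$-seminorm then gives $\int\int J(x-y)|u_{n\ell}(y)-u_{n\ell}(x)|^2\,dy\,dx=0$, so by $(J1)$ we have $u_{n\ell}(x)=u_{n\ell}(y)$ for every pair at distance $\le 2\delta$, and the $\delta$-connectedness assumption $(2)$ propagates equality throughout $\Omega_{n\ell}$, so $u_{n\ell}\equiv c_2$. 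Passing to the limit in the coupling seminorm yields
\[
\int_{\Omega_{n\ell}}\!\int_{\Omega_\ell} G(x,y)\,|c_2-c_1|^2\,dy\,dx=0,
\]
and $(G1)$ together with $(P1)$ produces a pair $(x,y)\in\Omega_\ell\times\Omega_{n\ell}$ with $\|x-y\|\le 2\delta$ and $G(x,y)>0$, forcing $c_1=c_2$. Hence the strong $L^2(\Omega)$-limit of $u_n$ is a constant $c$; the mean-zero constraint passes to the limit to give $c=0$, contradicting $\|u_n\|_{L^2(\Omega)}=1$.

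With \eqref{plan:poincare} in hand, $E_{Neu}$ is coercive on $H_{Neu}$ by Cauchy-Schwarz on the source term, so any minimizing sequence admits a weakly convergent subsequence whose limit is a minimizer by weak l.s.c.\ of $Q$ and weak continuity of the linear term. Uniqueness in $H_{Neu}$ is immediate: if $u_1,u_2$ are two minimizers then $u_1-u_2\in H_{Neu}$, the parallelogram identity gives $Q(u_1-u_2)=0$, and \eqref{plan:poincare} forces $u_1=u_2$. Removing the mean-zero constraint, minimizers of $E_{Neu}$ differ by additive constants, as required.

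To identify the Euler-Lagrange system, I compute $\frac{d}{dt}E_{Neu}(u+t\varphi)\big|_{t=0}=0$, which produces the bilinear weak formulation (one pairing per semi-norm) with source $\int_\Omega f\varphi$. Since the quadratic part depends only on gradients and differences while $\int_\Omega f=0$, replacing $\varphi$ by $\varphi-|\Omega|^{-1}\!\int_\Omega\varphi$ alters neither side, so the weak identity is valid for every admissible test function, not just those in $H_{Neu}$. Choosing $\varphi$ supported in $\Omega_{n\ell}$ and applying Fubini on the coupling pairing extracts \eqref{eq:main.Neumann.nonlocal.intro.888} as a pointwise a.e.\ identity; choosing $\varphi\in C^1(\overline{\Omega_\ell})$ extended by $0$ into $\Omega_{n\ell}$ and integrating by parts in the local gradient term yields the PDE in \eqref{eq:main.Neumann.local.intro} together with the natural Neumann condition $\partial u/\partial\eta=0$ on $\partial\Omega_\ell$.
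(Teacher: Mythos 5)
Your proposal is correct and follows the same global strategy as the paper: establish a Poincar\'e--Wirtinger inequality by a compactness--contradiction argument, deduce coercivity, apply the direct method, obtain uniqueness from strict convexity, and read off the Euler--Lagrange system by localizing test functions. Where you diverge is in how compactness is handled on the nonlocal region. The paper only extracts \emph{weak} convergence $u_n\rightharpoonup k_2$ in $L^2(\Omega_{n\ell})$, pins down $k_1=k_2=0$ via the $G$-term and the zero-mean constraint, and closes the contradiction by invoking the nonlocal Poincar\'e inequality of \cite{ElLibro} as a black box. You instead upgrade to \emph{strong} $L^2(\Omega_{n\ell})$ convergence directly from $(J2)$ and the identity $\int\!\int J\,|u_n(y)-u_n(x)|^2\,dy\,dx=2\int j\,u_n^2-2\int u_n\,T_Ju_n$, with $j(x)=\int_{\Omega_{n\ell}}J(x-y)\,dy$, which makes the final contradiction immediate and renders the argument self-contained where the paper is not. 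That is a valid and arguably preferable route, but the sentence claiming the identity ``allows one to extract a further subsequence converging strongly'' compresses several nontrivial steps: one needs $j\ge m_J>0$ on $\Omega_{n\ell}$ (true, since $(J1)$ gives $j(x)\ge C\,|B_{2\delta}(x)\cap\Omega_{n\ell}|$ and the right side is continuous and strictly positive on the compact $\overline{\Omega_{n\ell}}$); one needs weak lower semicontinuity to see that the weak limit $u$ already has vanishing $J$-seminorm; and then the cleanest conclusion, which avoids assuming $j$ bounded above, applies the identity to $w_n=u_n-u$, whose $J$-seminorm also tends to zero by Cauchy--Schwarz on the associated bilinear form, and uses $T_Jw_n\to0$ strongly (from $(J2)$) to get $\int j\,w_n^2\to0$ and hence $\|w_n\|_{L^2(\Omega_{n\ell})}\to0$. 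If you fill in those steps the argument is complete. Your remark that the first-variation identity extends from zero-mean test functions to arbitrary ones, since both sides are invariant under adding constants, is a clean observation consistent with the paper's Lemma \ref{ec-dif.1}.
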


Notice that in the resulting equations the coupling terms between the local and the nonlocal
regions appear as source integral terms in the corresponding equations. 

The key to obtain this result will be to prove a Poincar\'e-Wirtinger type inequality: there exists $c>0$
such that 
$$
\begin{array}{l}
\displaystyle
\int_{\Omega_\ell} |\nabla u|^2 + \frac{1}{2 }\int_{\Omega_{n\ell}}\int_{\Omega_{n\ell}} J(x-y)(u(y)-u(x))^2\, dy\, dx
+ \frac{1}{2}\int_{\Omega_{n\ell}}\int_{\Omega_\ell} G(x,y)|u(y)-u(x)|^2\, dy\,  dx \\[10pt]
\qquad \qquad \qquad \displaystyle \geq \displaystyle c\int_{\Omega}u^2 (x) \, dx
\end{array}
$$
for every function $u\in H_{Neu}$. Here the constant $c>0 $ can be estimated in terms of the
parameters of problem (the geometry of the domains and the kernels). 

{\subsection{Second model. Mixed couplings.} Now, let us present our second model. 
In the first model we have a nonlocal volumetric coupling between $ \Omega_{\ell}$ and $ \Omega_{n\ell}$. 
In the second model we introduce \emph{mixed} couplings. In mixed couplings volumetric and lower dimensional parts can interact with each other.
The mixed couplings that we introduce involve interactions of $\Omega_{n\ell}$ with a fixed  smooth hypersurface  $$\Gamma \subset \overline{\Omega}_\ell.$$ 
 For $
u: \Omega \mapsto \mathbb{R}$
we consider the energy
\begin{equation} \label{energy.909}
\begin{array}{l}
     F_{Neu} (u)  \displaystyle:=\frac{1}{2}\int_{\Omega_{l}} | \nabla u (x) |^2 dx + 
    \frac{1}{2}\int_{\Omega_{nl}}\int_{\Omega_{nl}}J(x-y)\left(u(y)-u(x)\right)^2 dy\, dx \\[10pt]
    \displaystyle \quad \qquad \qquad\qquad \qquad + \frac{1}{2}\int_{\Omega_{nl}}
    \int_{\Gamma}G(x,z)\left(u(x)-u(z)\right)^2 d\sigma(z) dx
    - \int_\Omega f (x) u(x) \, dx.
\end{array}
\end{equation}
Here, we can identify again the local part of the energy acting in $\Omega_\ell$, \eqref{local.part}, 
the nonlocal part acting in $\Omega_{n\ell}$, \eqref{nonlocal.part}, and the external source
\eqref{source.part};
but now the coupling term is different (now it involves integrals in $\Omega_{n\ell}$ and on the surface
$\Gamma \subset \overline{\Omega_{\ell}}$),
$$
 \frac{1}{2}\int_{\Omega_{nl}}\int_{\Gamma}G(x,z)\left(u(x)-u(z)\right)^2 \, d\sigma(z)\,  dx.
$$

In this context, we assume the same conditions on $J$ as for the first model.  
Concerning the coupling, a nonnegative and measurable function 
$G:\Gamma \times \Omega_{n\ell} \mapsto \mathbb{R}$
plays the role of the associated kernel. The following condition is analogous to the volumetric counterpart
 \begin{itemize}
  \item[$(G2)$]  there exist $\delta>0$
and $C>0$ such that for any $(x,y)\in \Gamma\times \Omega_{n\ell}$,  $G(x,y)>C$  if $\|x-y\|\le 2\delta$.
 \end{itemize}
Again, to avoid trivial couplings we assume that 
\begin{enumerate}
\item[$(P2)$]  $dist(\Gamma,\Omega_{n\ell})<\delta$.
\end{enumerate}

Here again we have to assume that
$
\int_\Omega f(x) \, dx =0,$
and then we look for minimizers in
$
H_{Neu} = \left\{ u \in  L^2 (\Omega) : \, u|_{\Omega_\ell} \in H^1 (\Omega_\ell), \, \int_\Omega u(x) \, dx =0 \right\}.$
As before, we can obtain existence and uniqueness of a minimizer of $F_{Neu}(u)$ in $H_{Neu}$.

\begin{theorem} \label{teo.2.intro} Assume that the kernels and the domains verify $(J1)$, $(J2)$, 
$(1)$, $(2)$, $(G2)$ and
$(P2)$. 
Given $f \in L^2 (\Omega)$ with $\int_{\Omega} f =0$ there exists a unique minimizer of $F_{Neu}$ 
in $H_{Neu}$. The minimizer of $F_{Neu}$ is a weak solution to 
 \begin{equation}
 \label{local-N.intro}
 \left\{
\begin{array}{ll}
 \displaystyle - f(x)  =  \Delta u (x),  \qquad & x\in \Omega_\ell, \\[10pt]
 \displaystyle   \frac{\partial u}{\partial \eta}(x)  =  0, \qquad & x\in \partial \Omega_\ell \setminus \Gamma,  \\[10pt]
 \displaystyle    \frac{\partial u}{\partial \eta}(x)  = \int_{\Omega_{nl}} G(y,x)( u(y)-  u(x)) dy, 
 \qquad & x \in \Gamma,
 \end{array} \right.
\end{equation}
and
\begin{equation}\label{nonlocal-N.intro} 
- f(x)  = 2 \int_{\Omega_{nl}} J(x-y)\left(u(y)-u(x) \right)dy 
- \int_\Gamma G(x,z) (u(x)-  u(z)) d\sigma (z), \ x \in \Omega_{nl}.
   \end{equation} 
\end{theorem}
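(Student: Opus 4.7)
The plan is to apply the direct method of the calculus of variations to the quadratic functional $F_{Neu}$ on the Hilbert space $H_{Neu}$, and then compute the first variation to obtain \eqref{local-N.intro}--\eqref{nonlocal-N.intro}. Coercivity, lower semicontinuity, and strict convexity (modulo constants) are the three ingredients needed, and the main work is in establishing coercivity through a Poincar\'e--Wirtinger type inequality adapted to the mixed (bulk--interface) coupling.

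The heart of the matter is to prove that there exists $c>0$ such that, for every $u\in H_{Neu}$,
\begin{equation}\label{PW-mixed}
\int_{\Omega_\ell}|\nabla u|^2\, dx +\tfrac12\!\int_{\Omega_{n\ell}}\!\!\int_{\Omega_{n\ell}}\!\! J(x-y)|u(y)-u(x)|^2\, dy\, dx+\tfrac12\!\int_{\Omega_{n\ell}}\!\!\int_\Gamma G(x,z)|u(x)-u(z)|^2\, d\sigma(z)\, dx\ \geq\ c\!\int_\Omega u^2\, dx.
\end{equation}
I would argue by contradiction/compactness: if \eqref{PW-mixed} fails, pick a sequence $u_n\in H_{Neu}$ with $\|u_n\|_{L^2(\Omega)}=1$ whose left-hand side tends to $0$. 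Then $u_n|_{\Omega_\ell}$ is bounded in $H^1(\Omega_\ell)$, so by Rellich--Kondrachov (using that $\Omega_\ell$ is Lipschitz and connected by hypothesis (1)), along a subsequence $u_n\to c_\ell$ strongly in $L^2(\Omega_\ell)$ with $c_\ell$ a constant; moreover, by the continuity and compactness of the trace $H^1(\Omega_\ell)\to L^2(\Gamma)$, the traces $u_n|_\Gamma$ converge to $c_\ell$ in $L^2(\Gamma)$. On the nonlocal side, the vanishing of the $J$-form together with the compactness assumption (J2) on $T_J$ and $\delta$-connectedness of $\Omega_{n\ell}$ (hypothesis (2)) implies, as in the proof of Theorem \ref{teo.1.intro}, that $u_n\to c_{n\ell}$ strongly in $L^2(\Omega_{n\ell})$ for some constant $c_{n\ell}$.

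The hard part, and the place where the second model differs from the first, is to match the two constants through the surface coupling. The vanishing of the coupling term, combined with the $L^2(\Gamma)$-convergence of traces and the $L^2(\Omega_{n\ell})$-convergence in the bulk, yields
\[
\int_{\Omega_{n\ell}}\int_\Gamma G(x,z)\,(c_{n\ell}-c_\ell)^2\, d\sigma(z)\, dx = 0 .
\]
By hypothesis $(P2)$ there exist $z_0\in\Gamma$ and $x_0\in\Omega_{n\ell}$ with $\|x_0-z_0\|<\delta$, and by continuity there is a set of positive measure on $\Gamma\times\Omega_{n\ell}$ where $\|x-z\|\le 2\delta$; on this set $(G2)$ gives $G\ge C>0$, forcing $c_\ell=c_{n\ell}=:c$. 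The zero-mean condition $\int_\Omega u_n=0$ passes to the limit and gives $c|\Omega|=0$, so $c=0$, contradicting $\|u_n\|_{L^2(\Omega)}=1$. This proves \eqref{PW-mixed}.

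Once \eqref{PW-mixed} is in hand, coercivity of $F_{Neu}$ on $H_{Neu}$ follows by absorbing the linear source with Cauchy--Schwarz and Young. Weak lower semicontinuity is clear: the gradient term is weakly lsc in $H^1(\Omega_\ell)$, the two nonnegative nonlocal quadratic forms are convex and continuous in $L^2$ (using the weak continuity of the trace from $H^1(\Omega_\ell)$ into $L^2(\Gamma)$ for the coupling term), and the source term is weakly continuous. Extracting a minimizing sequence, bounded thanks to coercivity, and passing to a weak limit yields a minimizer; strict convexity of $F_{Neu}$ on the affine subspace $H_{Neu}$ (guaranteed by \eqref{PW-mixed}) gives uniqueness. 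Finally, I would compute $\frac{d}{d\varepsilon}F_{Neu}(u+\varepsilon\varphi)\big|_{\varepsilon=0}=0$ for arbitrary $\varphi\in H^1(\Omega_\ell)\cup L^2(\Omega_{n\ell})$ with zero mean, integrate by parts in $\Omega_\ell$ to split interior and boundary contributions, use the symmetry of $J$ and Fubini to symmetrize the $G$-term on $\Omega_{n\ell}\times\Gamma$, and collect terms separately for test functions supported in $\Omega_\ell$, on $\Gamma$, and in $\Omega_{n\ell}$; the zero-mean constraint can be dropped by first testing against $\varphi-\fint_\Omega \varphi$ and using $\int_\Omega f=0$. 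This yields exactly \eqref{local-N.intro}--\eqref{nonlocal-N.intro} in weak form.
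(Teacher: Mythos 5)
Your plan is correct and follows the same overall route as the paper: a contradiction/compactness argument for a Poincar\'e--Wirtinger inequality in $H_{Neu}$ (Lemma \ref{lema.1.22}), then the direct method and strict convexity (Theorem \ref{teo.2.22}), then computing the first variation to read off the Euler--Lagrange system (Lemma \ref{lema.eq.Mod2.Neu}). One point of caution concerns the sentence ``as in the proof of Theorem~\ref{teo.1.intro}, $u_n\to c_{n\ell}$ strongly in $L^2(\Omega_{n\ell})$.'' The paper's proof of the first-model inequality (Lemma~\ref{lema.Poincare}) never establishes strong $L^2(\Omega_{n\ell})$ convergence; it only extracts weak convergence to a constant and then closes the argument by invoking the purely nonlocal Poincar\'e inequality from \cite{ElLibro}. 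Likewise, in the proof of Lemma~\ref{lema.1.22} for the mixed coupling the paper keeps only weak convergence $u_n\rightharpoonup k_2$ in $L^2(\Omega_{n\ell})$, identifies $k_1=k_2$ via weak lower semicontinuity of the $G$-form, deduces $k_1=k_2=0$ from the mean-zero constraint, and then derives the contradiction by expanding both the $J$- and $G$-forms, using $(J2)$ to pass to the limit in the cross term and the uniform lower bound $\int_{\Omega_{n\ell}}J(x-y)\,dy+\int_\Gamma G(x,z)\,d\sigma(z)\ge c>0$ to conclude $\int_{\Omega_{n\ell}}|u_n|^2\to 0$, contradicting $\int_{\Omega_{n\ell}}|u_n|^2\to 1$. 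Your reorganized version---prove strong $L^2(\Omega_{n\ell})$ convergence first, then match constants---is valid (the strong convergence does follow from weak convergence, the vanishing of the $J$-form, $(J1)$ and $(J2)$, by the Radon--Riesz-type computation $\int (u_n-c_{n\ell})^2\!\int J\,dy\,dx\to 0$), but you should either spell out that argument or replace the attribution, since the first-model proof you cite doesn't actually prove it.
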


Notice that now the coupling term between the local and the nonlocal
regions in the local part of the problem appears as a flux condition
on $\Gamma$.

\subsection{Singular kernels.}
We can also deal with singular kernels related to the fractional Laplacian and consider
energies of the form
\begin{equation} \label{energy.sing}
\begin{array}{l}
\displaystyle 
E_{Neu}(u):=\int_{\Omega_\ell} \frac{|\nabla u (x)|^2}{2} \, dx 
+ \frac{1}{2}\int_{\Omega_{n\ell}}\int_{\Omega_{n\ell}}  \frac{C}{|x-y|^{N+2s}} |u(y)-u(x)|^2\, dy \, dx
\\[10pt]
\displaystyle \qquad\qquad  \qquad
+ \frac{1}{2}\int_{\Omega_{n\ell}}\int_{\Omega_\ell} G(x,y)|u(y)-u(x)|^2\, dy\,  dx
- \int_\Omega f (x) u (x) \, dx.
\end{array}
\end{equation}

Here we look for minimizers in the space
$$
H_{Neu} = \left\{ u : u|_{\Omega_\ell} \in H^1 (\Omega_\ell), \ u|_{\Omega_{n\ell}} 
\in H^s (\Omega_{n\ell}), \ \int_\Omega u(x) \, dx =0 \right\}.
$$

This case is much simpler since we have the compact embeddings $H^1 (\Omega_\ell)
\hookrightarrow L^2 (\Omega_\ell)$ and $H^s (\Omega_{n\ell})
\hookrightarrow L^2 (\Omega_{n\ell})$. 
Also for these energies we can show the following result 
(that is the fractional counterpart to our previous results).

\begin{theorem} \label{teo.1.intro.sing}
Given $f \in L^2 (\Omega)$ with $\int_\Omega f =0$ there exists a unique minimizer of $E_{Neu}(u)$ in $H_{Neu}$.
The minimizer of $E_{Neu}(u)$ in $H_{Neu}$ is a weak solution to the equation
\begin{equation}
\label{eq:main.Neumann.local.intro.sing}
\left\{
\begin{array}{ll}
\displaystyle - f(x)=\Delta u (x) + \int_{\Omega_{n\ell}} G(x,y)(u(y)-u(x))\, dy,\ &x\in \Omega_\ell,
\\[10pt]
\displaystyle \frac{\partial u}{\partial \eta} (x)=0,\qquad & x\in \partial \Omega_\ell
\end{array} \right.
\end{equation}
and a nonlocal equation with a source in $\Omega_{n\ell}$,
\begin{equation}
\label{eq:main.Neumann.nonlocal.intro.sing}
\displaystyle - f(x) = 2 \int_{\Omega_{n\ell}}  \frac{C}{|x-y|^{N+2s}} (u(y)-u(x))\, dy  
 + \int_{\Omega_\ell} G(x,y)(u(y)-u(x))\, dy, \quad x \in \Omega_{n\ell}.
\end{equation}
\end{theorem}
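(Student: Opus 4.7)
The plan is to apply the direct method of the calculus of variations in the Hilbert space $H_{Neu}$, analogously to Theorems \ref{teo.1.intro} and \ref{teo.2.intro} but now exploiting the compact embeddings $H^1(\Omega_\ell)\hookrightarrow L^2(\Omega_\ell)$ and $H^s(\Omega_{n\ell})\hookrightarrow L^2(\Omega_{n\ell})$, which bypass the more delicate compactness arguments needed for $L^1$ kernels. The functional $E_{Neu}$ is a continuous, strictly convex quadratic form minus a linear term on $H_{Neu}$, so weak lower semicontinuity is automatic from convexity and the whole argument reduces to establishing coercivity, which in turn reduces to a Poincar\'e--Wirtinger type inequality adapted to the present singular-kernel coupling.

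The key step is to prove that there exists $c>0$ such that
\begin{equation*}
\int_{\Omega_\ell}|\nabla u|^2\,dx + \int_{\Omega_{n\ell}}\int_{\Omega_{n\ell}} \frac{|u(y)-u(x)|^2}{|x-y|^{N+2s}}\,dy\,dx + \int_{\Omega_{n\ell}}\int_{\Omega_\ell} G(x,y)|u(y)-u(x)|^2\,dy\,dx \geq c\int_\Omega u^2\,dx
\end{equation*}
for every $u\in H_{Neu}$. I would argue by contradiction: if the inequality fails, pick a sequence $u_n\in H_{Neu}$ with $\|u_n\|_{L^2(\Omega)}=1$ and left-hand side tending to zero. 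Then $u_n|_{\Omega_\ell}$ is bounded in $H^1(\Omega_\ell)$ and $u_n|_{\Omega_{n\ell}}$ is bounded in $H^s(\Omega_{n\ell})$. By the compact embeddings a subsequence converges strongly in $L^2(\Omega)$ to some $u$ with $\|u\|_{L^2(\Omega)}=1$ and $\int_\Omega u=0$. By weak lower semicontinuity of each seminorm, $\nabla u\equiv 0$ on $\Omega_\ell$ (so $u\equiv a$ there, by connectedness of $\Omega_\ell$), while the Gagliardo seminorm over $\Omega_{n\ell}$ vanishes; since $|x-y|^{-N-2s}>0$ everywhere on $\Omega_{n\ell}\times\Omega_{n\ell}$ this forces $u(x)=u(y)$ a.e., so $u\equiv b$ on $\Omega_{n\ell}$. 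Finally, the coupling term gives $u(x)=u(y)$ for a.e.\ pair with $G(x,y)>0$; by $(G1)$ and $(P1)$ such pairs fill a set of positive measure in $\Omega_\ell\times\Omega_{n\ell}$, forcing $a=b$. The zero-mean constraint then yields $u\equiv 0$, contradicting $\|u\|_{L^2(\Omega)}=1$.

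With this inequality in hand, coercivity of $E_{Neu}$ on $H_{Neu}$ follows from Cauchy--Schwarz applied to the source term. A minimizing sequence is then bounded in the natural norm, admits a weakly convergent subsequence, and the quadratic functional attains its infimum by weak lower semicontinuity. Uniqueness follows from strict convexity: any two minimizers differ by an element of $H_{Neu}$ whose quadratic energy vanishes, so the Poincar\'e--Wirtinger inequality forces that difference to be zero in $L^2(\Omega)$.

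The Euler--Lagrange equations \eqref{eq:main.Neumann.local.intro.sing}--\eqref{eq:main.Neumann.nonlocal.intro.sing} are obtained by computing the first variation $\frac{d}{dt}E_{Neu}(u+t\varphi)\big|_{t=0}=0$ for all $\varphi \in H_{Neu}$. The zero-mean constraint produces no Lagrange multiplier since $\int_\Omega f=0$ makes the energy invariant under addition of constants, so testing against $\varphi\equiv 1$ vanishes identically. Standard integration by parts on $\Omega_\ell$ produces $-\Delta u$ together with the homogeneous Neumann boundary term $\partial u/\partial\eta = 0$ on $\partial\Omega_\ell$, and the symmetrization of the mixed coupling term (treating $\varphi|_{\Omega_\ell}$ and $\varphi|_{\Omega_{n\ell}}$ separately) distributes the integral coupling source to both the local and nonlocal equations. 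The main obstacle is the contradiction step for the Poincar\'e--Wirtinger inequality, specifically the final identification $a=b$ through the coupling, which is precisely where the hypotheses $(G1)$ and $(P1)$ play their role; everything else is routine thanks to the compact embeddings.
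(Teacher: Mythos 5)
Your proposal is correct and follows essentially the same route as the paper: a contradiction argument using the compact embeddings $H^1(\Omega_\ell)\hookrightarrow L^2(\Omega_\ell)$ and $H^s(\Omega_{n\ell})\hookrightarrow L^2(\Omega_{n\ell})$ to establish the Poincar\'e--Wirtinger inequality, then the direct method, strict convexity for uniqueness, and the first variation for the Euler--Lagrange system. The only (cosmetic) difference is that you phrase the limit as a single piecewise-constant function $u$ and invoke weak lower semicontinuity of the seminorms explicitly, whereas the paper identifies the two constant limits $k_1,k_2$ directly; the observation that the everywhere-positive fractional kernel makes $\delta$-connectedness of $\Omega_{n\ell}$ unnecessary is a nice remark but does not change the argument.
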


We can also deal with mixed couplings and look for minimizers of
\begin{equation} \label{energy.909.sing}
\begin{array}{l}
    F_{Neu} (u)  \displaystyle:=\frac{1}{2}\int_{\Omega_{l}} | \nabla u (x) |^2 dx + 
    \frac{1}{2}\int_{\Omega_{nl}}\int_{\Omega_{nl}} \frac{C}{|x-y|^{N+2s}}
    \left(u(y)-u(x)\right)^2 dydx \\[10pt]
    \displaystyle \quad \qquad \qquad\qquad \qquad + \frac{1}{2}\int_{\Omega_{nl}}\int_{\Gamma}G(x,z)\left(u(x)-u(z)\right)^2 d\sigma(z) dx
    - \int_\Omega f(x) u(x) \, dx.
\end{array}
\end{equation}

\begin{theorem} \label{teo.2.intro.sing}
Given $f \in L^2 (\Omega)$ with $\int_{\Omega} f =0$ there exists a unique minimizer of $F_{Neu}(u)$ 
in $H_{Neu}$. The minimizer of $F_{Neu}$ is a weak solution to 
 \begin{equation}\label{local-N.intro.sing}
 \left\{
\begin{array}{ll}
 \displaystyle  - f(x)  =  \Delta u (x),  \qquad & x\in \Omega_\ell, \\[10pt]
 \displaystyle   \frac{\partial u}{\partial \eta}(x)  =  0, \qquad & x\in \partial \Omega_\ell \setminus \Gamma,  \\[10pt]
 \displaystyle    \frac{\partial u}{\partial \eta}(x)  = \int_{\Omega_{nl}} G(y,x)( u(y)-  u(x)) dy, \qquad & x \in \Gamma, 
    \end{array} \right.
    \end{equation}
    and
    \begin{equation}\label{nonlocal-N.intro.888} 
- f(x)  = 2 \int_{\Omega_{n\ell}}  \frac{C}{|x-y|^{N+2s}} (u(y)-u(x)) \, dy  
   -  \int_\Gamma G(x,z) ( u(x)-  u(z)) d\sigma (z) ,\qquad  x \in \Omega_{nl}. 
   \end{equation} 
\end{theorem}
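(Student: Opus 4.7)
The plan is to follow the direct method of the calculus of variations, in close parallel with Theorem~\ref{teo.2.intro}, the only substantive difference being that in the singular setting both contributions to the quadratic part of the energy produce spaces that embed compactly into $L^2$, which simplifies the analysis. I would first establish a Poincar\'e--Wirtinger type inequality that controls $\|u\|_{L^2(\Omega)}^2$ by the quadratic part of $F_{Neu}$ on the mean--zero subspace; from this the existence of a minimizer follows by standard arguments, and the Euler--Lagrange equation is then rearranged into the claimed coupled system.

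The core step is the Poincar\'e--Wirtinger inequality, which I would prove by contradiction. Suppose there exists a sequence $u_n\in H_{Neu}$ with $\|u_n\|_{L^2(\Omega)}=1$ and vanishing quadratic part of $F_{Neu}$. Then $\{u_n|_{\Omega_\ell}\}$ is bounded in $H^1(\Omega_\ell)$ and $\{u_n|_{\Omega_{n\ell}}\}$ is bounded in $H^s(\Omega_{n\ell})$, so by the Rellich--Kondrachov theorem and the compact embedding $H^s(\Omega_{n\ell})\hookrightarrow L^2(\Omega_{n\ell})$ we can extract a subsequence converging strongly in $L^2(\Omega)$ and weakly in the respective Sobolev spaces to some $u\in H_{Neu}$ with $\|u\|_{L^2(\Omega)}=1$. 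Weak lower semicontinuity of the three quadratic terms forces $\nabla u\equiv 0$ on $\Omega_\ell$ and $[u]_{H^s(\Omega_{n\ell})}=0$, so $u\equiv c_\ell$ on the connected set $\Omega_\ell$ and $u\equiv c_{n\ell}$ on $\Omega_{n\ell}$. The coupling term on $\Gamma$ then gives $\int_{\Omega_{n\ell}}\int_\Gamma G(x,z)(c_\ell-c_{n\ell})^2\, d\sigma(z)\, dx=0$, where the trace $u|_\Gamma=c_\ell$ is well defined since $\Gamma\subset\overline{\Omega_\ell}$ and $u|_{\Omega_\ell}\in H^1(\Omega_\ell)$. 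Hypotheses $(G2)$ and $(P2)$ guarantee a set of positive product measure on which $G>0$, so $c_\ell=c_{n\ell}$, and together with $\int_\Omega u=0$ this yields $u\equiv 0$, contradicting $\|u\|_{L^2(\Omega)}=1$.

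With the Poincar\'e--Wirtinger inequality at hand, the quadratic part of $F_{Neu}$ controls the squared norm $\|u\|_{H^1(\Omega_\ell)}^2+\|u\|_{H^s(\Omega_{n\ell})}^2$ on $H_{Neu}$, which together with the Cauchy--Schwarz bound on the linear term $\int_\Omega fu$ gives coercivity of $F_{Neu}$. Convexity and continuity of the quadratic part provide weak lower semicontinuity, so the direct method yields a minimizer $u\in H_{Neu}$. Strict convexity of the quadratic part on $H_{Neu}$ (any two minimizers would differ by a function on which the quadratic form vanishes, i.e.\ a constant, and then the mean--zero constraint forces them to agree) gives uniqueness.

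To derive the equations, I observe that the quadratic part of $F_{Neu}$ depends only on differences $u(x)-u(y)$ and hence is invariant under adding constants, and the linear term $\int_\Omega fu$ is likewise invariant by the hypothesis $\int_\Omega f=0$. Consequently the Lagrange multiplier associated with the constraint $\int_\Omega u=0$ vanishes, and the first variation $\frac{d}{dt}\big|_{t=0}F_{Neu}(u+t\varphi)=0$ is valid for every $\varphi$ with $\varphi|_{\Omega_\ell}\in H^1(\Omega_\ell)$ and $\varphi|_{\Omega_{n\ell}}\in H^s(\Omega_{n\ell})$. Testing with $\varphi\in C_c^\infty(\Omega_\ell\setminus\Gamma)$ and integrating by parts gives $-\Delta u=f$ in $\Omega_\ell$; testing with $\varphi\in C_c^\infty(\Omega_{n\ell})$ gives the pointwise nonlocal equation \eqref{nonlocal-N.intro.888}; finally testing with general $\varphi$ and using the already established interior equations leaves boundary terms on $\partial\Omega_\ell$, which after an integration by parts on $\Omega_\ell$ and a symmetrization of the $\Gamma$ coupling integral yield the homogeneous Neumann condition on $\partial\Omega_\ell\setminus\Gamma$ and the prescribed flux condition on $\Gamma$, exactly as in \eqref{local-N.intro.sing}. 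The delicate point that I expect to require the most care is the precise handling of the trace on $\Gamma$ in the Poincar\'e--Wirtinger step and in the derivation of the flux condition, since $\Gamma$ has $N$--dimensional Lebesgue measure zero, but this is a standard trace argument given that $\Gamma$ is smooth and $\Gamma\subset\overline{\Omega_\ell}$.
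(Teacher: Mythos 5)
Your proposal is correct and follows essentially the same route as the paper: a Poincar\'e--Wirtinger inequality proved by contradiction via the compact embeddings $H^1(\Omega_\ell)\hookrightarrow L^2(\Omega_\ell)$ and $H^s(\Omega_{n\ell})\hookrightarrow L^2(\Omega_{n\ell})$ (together with the trace on $\Gamma$), forcing the limit to be piecewise constant and then, through the coupling term and the mean-zero constraint, identically zero; followed by coercivity, the direct method, strict convexity for uniqueness, and the first variation to obtain the interior equations, the flux condition on $\Gamma$, and the homogeneous Neumann condition on $\partial\Omega_\ell\setminus\Gamma$. The only cosmetic difference is that you pass to a strong $L^2$ limit $u$ first and invoke weak lower semicontinuity to identify it as piecewise constant, whereas the paper argues directly that the subsequence converges strongly to constants $k_1,k_2$ on each piece; in the singular setting these are equivalent.
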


\begin{remark} 
The kernel $G$ can also be a singular kernel
$$
G(x,z) = \frac{c}{|x-z|^{N+2t}}.
$$
In this case one has to add the condition
$$
\int_{\Omega_{n\ell}}\int_{\Omega_\ell} G(x,y)|u(y)-u(x)|^2\, dy\,  dx < +\infty
$$
in the space $H_{Neu}$. The proof that there is a minimizer is similar and hence
we leave the details to the reader. 
\end{remark}

\begin{remark} When $s>1/2$ since we have a trace theorem for $H^s (\Omega_{n\ell})$
we can also deal with surface to surface couplings and study
\begin{equation} \label{energy.909.sing.surface}
\begin{array}{l}
    F_{Neu} (u)  \displaystyle:=\frac{1}{2}\int_{\Omega_{l}} | \nabla u |^2 dx + 
    \frac{1}{2}\int_{\Omega_{nl}}\int_{\Omega_{nl}} \frac{C}{|x-y|^{N+2s}}
    \left(u(y)-u(x)\right)^2 dydx \\[10pt]
    \displaystyle \quad \qquad \qquad\qquad \qquad 
    + \frac{1}{2}\int_{\Gamma_\ell}\int_{\Gamma_{n\ell}}G(x,z)\left(u(x)-u(z)\right)^2 d\sigma(z) d \sigma (x)
    - \int_\Omega f(x) u(x) \, dx.
\end{array}
\end{equation}
Here $\Gamma_\ell$ is a smooth hypersurface in $\overline{\Omega_\ell}$ and
$\Gamma_{n\ell}$ is a smooth hypersurface in $\overline{\Omega_{n\ell}}$. Notice that
now we are coupling the domains via interactions on hypersurfaces. 
The hypothesis on the kernel $G$ that is needed to obtain a coupling between the two
domains is, as before, some strict positivity on pairs of points belonging to the coupling surfaces.
\end{remark}

Let us end the introduction with a brief description of previous references.
From a mathematical point of view, interesting problems arise from coupling local and nonlocal models, 
see \cite{Peri2,Peri3,delia2,delia3,Du,Gal,GQR,Kri,santos2020} and references therein.
As previous examples of coupling approaches between 
local and nonlocal regions we refer the reader to \cite{Peri1,Peri2,Peri3,delia2,delia3,delia,Du,Gal,GQR,Han,Kri,santos2020,Sel,Sel2,Sel3}
the survey \cite{SUR}
and references therein. 
Previous strategies treat the coupling condition as an optimization
problem (the
goal is to minimize the mismatch of the local and nonlocal solutions in a common overlapping
region).
Another possible strategy for coupling relies on the partitioned procedure as a general coupling
strategy for heterogeneous systems, the system is divided into sub-problems in
their respective sub-domains, which communicate with each other via transmission
conditions. Moreover, couplings between sets of different dimension are possible.
In \cite{Bere} the effects of network transportation on enhancing biological invasion is studied. The proposed mathematical model consists of one equation with nonlocal diffusion in a one-dimensional domain coupled via the boundary condition with a standard reaction-diffusion in a two-dimensional domain. 
In \cite{delia2}, local and nonlocal problems were coupled through a prescribed region in which both kinds of equations overlap (the value of the solution in the nonlocal part of the domain is used as a Dirichlet boundary condition for the local part and vice-versa). This kind of coupling gives continuity of the solution in the overlapping region but does not preserve the total mass when Neumann boundary conditions are imposed.
In \cite{delia2} and \cite{Du}, numerical schemes using local and nonlocal equations were developed and used to improve the computational
accuracy when approximating a purely nonlocal problem. In \cite{GQR} and \cite{santos2020} (see also \cite{Gal, Kri}), 
evolution problems related to energies closely related to ours are studied (here we deal with stationary problems).

\medskip

The paper is organized as follows: In Section \ref{sect-volum} we deal 
with our first model involving volumetric coupling and prove 
Theorem \ref{teo.1.intro} while in Section \ref{sect-surface} we prove the results concerning the second
model, Theorem \ref{teo.2.intro}. Finally, in Section \ref{sect-sing} we deal with singular kernels.

\section{Volumetric coupling}  \label{sect-volum}

{\bf First model. Coupling local/nonlocal problems via source terms.}
Our aim is to look for a minimizer of the energy 
$$
\begin{array}{l}
\displaystyle 
E_{Neu}(u):=\int_{\Omega_\ell} \frac{|\nabla u|^2}{2} 
+ \frac{1}{2}\int_{\Omega_{n\ell}}\int_{\Omega_{n\ell}} J(x-y)|u(y)-u(x)|^2\, dy \, dx
\\[10pt]
\displaystyle \qquad\qquad \qquad  + \frac{1}{2}\int_{\Omega_{n\ell}}\int_{\Omega_\ell} G(x,y)|u(y)-u(x)|^2\, dy\,  dx
- \int_\Omega f(x)u(x) \, dx
\end{array}
$$
in the space $$
H_{Neu} = \left\{ u \in L^2 (\Omega), u|_{\Omega_\ell} \in H^1 (\Omega_\ell), \int_\Omega u =0 \right\},
$$
assuming
$$
\int_\Omega f (x) \, dx =0.
$$

Let us first prove an auxiliary lemma.  
 
\begin{lemma}
\label{lema:J0implica_cte}
 Let $D$ be an open  $\delta$ connected set, and $u:D\to \RR$. If
 $$
 \int_{D}\int_{D}J(x-y)|u(x)-u(y)|^2 \, dx \, dy=0,
 $$
 then $u(x)=k$ a.e. $x\in D$. 
\end{lemma}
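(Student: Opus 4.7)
The plan is to exploit the positivity lower bound $(J1)$ on a $2\delta$-horizon, combined with the chain characterization of $\delta$-connectedness, to propagate an a.e. equality from short scales to the whole of $D$.

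First I would observe that since $J(x-y)|u(x)-u(y)|^2$ is a nonnegative integrable function whose integral over $D\times D$ vanishes, it is zero a.e.\ on $D\times D$. By $(J1)$ the kernel is strictly bounded below by $C>0$ on $\{(x,y) : \|x-y\|\le 2\delta\}$, hence $u(x)=u(y)$ for a.e.\ $(x,y)\in D\times D$ with $\|x-y\|\le 2\delta$.

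Next I would upgrade this to a ball-to-ball statement. Given $x_0,y_0\in D$ with $\|x_0-y_0\|<2\delta$, pick $r>0$ small enough that $B(x_0,r)\cup B(y_0,r)\subset D$ and $\|x_0-y_0\|+2r\le 2\delta$; then every pair in $B(x_0,r)\times B(y_0,r)$ lies within the $2\delta$-horizon, so combining the previous step with Fubini one obtains that $u$ is essentially constant on $B(x_0,r)$, essentially constant on $B(y_0,r)$, and the two constants coincide (both equal $u(y)$ for a.e.\ $y\in B(y_0,r)$).

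Then I would use $\delta$-connectedness to propagate the constant. Fix a base point $p_0\in D$ and let $k$ be the essential value of $u$ on some small ball $B(p_0,r_0)\subset D$. For an arbitrary $q\in D$, by the equivalent formulation of Definition \ref{def:deltaconnected} there is a chain $p_0=x_0,x_1,\dots,x_n=q$ in $D$ with $\|x_i-x_{i+1}\|<\delta$. Choosing $r>0$ small enough uniformly so that $B(x_i,r)\subset D$ and $\|x_i-x_{i+1}\|+2r\le 2\delta$ for every $i$, iterated application of the ball-to-ball step forces $u\equiv k$ a.e.\ on each $B(x_i,r)$, and hence on a neighborhood of $q$. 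Covering $D$ by countably many such neighborhoods (e.g., centered at a countable dense subset) yields $u\equiv k$ a.e.\ on $D$.

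The main obstacle I anticipate is the measure-theoretic bookkeeping along the chain: applying Fubini pointwise to single chains of points only yields control on a \emph{typical} slice, so a clean argument requires working at the level of the ball-to-ball integral identity (as above) rather than trying to transport values along a single polygonal path. Once that formulation is in place, the inductive propagation along the chain and the final countable-cover step are routine.
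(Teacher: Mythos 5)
Your proof is correct, but it takes a genuinely different route from the paper's. You work directly with the chain characterization of $\delta$-connectedness: once you observe that $J(x-y)|u(x)-u(y)|^2=0$ a.e.\ on $D\times D$ forces $u(x)=u(y)$ for a.e.\ pair with $\|x-y\|\le 2\delta$, you upgrade this to a ``ball-to-ball'' statement, propagate the constant along a finite chain from a fixed base point to any $q\in D$, and then patch with a countable cover. The paper instead reasons at the level of open sets: it notes that $u$ is a.e.\ constant on $D\cap B_\delta(x_0)$ for any $x_0\in D$, considers the family $\mathcal{M}$ of open subsets on which $u\equiv k_0$ a.e., takes a maximal element $M$, and uses $\delta$-connectedness (in the original ``cannot split into two far-apart opens'' form) to find a ball of radius $\delta$ meeting both $M$ and $D\setminus M$ with positive measure, contradicting maximality. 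Both arguments hinge on the same local fact ($J$ bounded below on the $2\delta$-horizon) and the same global input ($\delta$-connectedness), but you use the path/chain reformulation explicitly stated after Definition~\ref{def:deltaconnected}, whereas the paper uses the set-splitting formulation together with a maximality argument. Your route is more constructive and arguably easier to make measure-theoretically airtight; the paper's is shorter once one grants the existence of the maximal set. One small point worth tightening in your write-up: the final ``cover by balls centered at a countable dense subset'' is slightly imprecise, since the admissible radii $r_q$ are not uniformly bounded below; the cleaner statement is that $\{B(q,r_q)\}_{q\in D}$ is an open cover of $D$, so by Lindel\"of (second countability of $\RR^N$) a countable subcover exists, and then $\{u\neq k\}\cap D$ is a countable union of null sets.
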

\begin{proof}
 Pick $x_0\in D$ and a ball $B_0=B_{\delta}(x_0)$,  we have 
 $$
 C\int_{D\cap B_0 }\int_{D\cap B_0}(u(x)-u(y))^2\le 
 \int_{D\cap B_0 }\int_{D\cap B_0}J(x-y)(u(x)-u(y))^2=0,
 $$
(since $J(x-y)>C$ for $x,y\in B_0$) and  hence  $u(x)=k_0$ a.e. $x\in D\cap B_0$. In order to see that this property holds a.e. $x\in D$, let us introduce the set  $\mathcal{M}=\{A\subset D, A\,\, \mbox{open}: u(x)=k_0 \,\mbox{a.e.} \,x\in A \}$ with the partial order given by $\subset$. Since $\mathcal{M}\neq\emptyset$ there exists a maximal open set $M\in \mathcal{M}$. If $M\subsetneq \Omega$ then we consider the set $\emptyset\neq D\setminus M$. 
 
 If $D\setminus M$ is open we necessarily have that $dist(M,D\setminus M)<\delta$
 (here we are using that $D$ is $\delta$ connected). If
 $D\setminus M$ is not open, then $dist(M, D\setminus M)=0$  (since $D$ is open).  
Either case, there exists a ball  $B_1$ of radius $\delta$ such that $B_1\cap D\setminus M\neq \emptyset$ and  $B_1\cap M$ has positive measure (since both, $B_1$ and $M$, are open sets).   Arguing as before we see that $u(x)=k_0$ a.e. $x\in B_1\cap D$, a contradiction (since $M$ is maximal with that property and we would have $M\subsetneq M\cup (B_1\cap D)=M$).
 We see that $M=D$ and the proof is complete. 
\end{proof}

\begin{lemma}
\label{lema:contagio_escalar}
Let $u_n:\Omega\to \RR$ be a sequence such that $u_n\to 0$ strongly in $L^2(\Omega_{\ell})$ and weakly in $L^2(\Omega_{n\ell})$, if in addition
\begin{equation}
\label{eq:nolcalomega}
\lim_{n\to\infty}\int_{\Omega_{n\ell}}\int_{\Omega} J(x-y)(u_n(x)-u_n(y))^2dy dx=0,
\end{equation}
 then
 $$
 \lim_{n\to\infty}\int_{\Omega_{n\ell}}u_n(x)^2 dx=0.
 $$
\end{lemma}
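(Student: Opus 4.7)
The plan is to transfer the strong $L^2(\Omega_\ell)$ convergence into $\Omega_{n\ell}$ by combining compactness of $T_J$ with a propagation argument based on the $\delta$-connectedness of $\Omega_{n\ell}$. First, I split the double integral in \eqref{eq:nolcalomega} along $\Omega = \Omega_\ell \cup \Omega_{n\ell}$; both pieces are nonnegative and hence each tends to $0$ individually. For the $\Omega_{n\ell}\times \Omega_{n\ell}$ piece, expanding the square and using the symmetry $J(z)=J(-z)$ gives
\begin{equation*}
\int_{\Omega_{n\ell}}\!\!\int_{\Omega_{n\ell}} J(x-y)(u_n(x)-u_n(y))^2\, dy\, dx = 2\!\!\int_{\Omega_{n\ell}} h(x)\,u_n^2(x)\, dx - 2\!\!\int_{\Omega_{n\ell}} u_n\, T_J u_n\, dx,
\end{equation*}
where $h(x):=\int_{\Omega_{n\ell}} J(x-y)\, dy$. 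Hypothesis $(J2)$ makes $T_J$ compact on $L^2(\Omega_{n\ell})$; combined with $u_n\rightharpoonup 0$, this gives $T_J u_n\to 0$ strongly, whence the cross term vanishes and $\int_{\Omega_{n\ell}} h\, u_n^2\to 0$.

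Next I seed strong $L^2$-convergence on a small ball inside $\Omega_{n\ell}$ using $(P1)$. Pick $p\in\Omega_\ell$, $q\in\Omega_{n\ell}$ with $|p-q|<\delta$ and a radius $r>0$ so small that $B_r(p)\subset\Omega_\ell$, $B_r(q)\subset\Omega_{n\ell}$ and $\|x-y\|\le 2\delta$ on $B_r(p)\times B_r(q)$. By $(J1)$, $J\ge C$ on that product, so the $\Omega_{n\ell}\times\Omega_\ell$ piece of \eqref{eq:nolcalomega} forces
\begin{equation*}
\int_{B_r(q)}\int_{B_r(p)}(u_n(x)-u_n(y))^2\, dy\, dx\longrightarrow 0.
\end{equation*}
Expanding the square, the strong $L^2(B_r(p))$-convergence of $u_n$ to $0$ kills the $\int_{B_r(p)} u_n$ and $\int_{B_r(p)} u_n^2$ contributions, while $\|u_n\|_{L^2(B_r(q))}$ being bounded keeps the middle term in check; what survives is $|B_r(p)|\int_{B_r(q)} u_n^2\to 0$, i.e.\ strong $L^2$-convergence of $u_n$ to $0$ on $B_r(q)$.

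The same expansion, applied now with \emph{both} balls inside $\Omega_{n\ell}$ and invoking the $\Omega_{n\ell}\times\Omega_{n\ell}$ piece, shows that strong convergence on a small ball $B\subset\Omega_{n\ell}$ propagates to any small ball $B'\subset\Omega_{n\ell}$ with $dist(B,B')<\delta$. By Definition \ref{def:deltaconnected} every point of $\Omega_{n\ell}$ is reachable from $q$ by a finite $\delta$-chain, so strong $L^2$-convergence extends to every ball inside $\Omega_{n\ell}$, and in particular to every compact subset; together with the weighted bound $\int h\,u_n^2\to 0$ from the first step (which handles the portion where $h$ is bounded below), this yields $\int_{\Omega_{n\ell}} u_n^2\to 0$. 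The main technical obstacle is precisely turning the stepwise ball-to-ball propagation into a uniform $L^2$ statement on all of $\Omega_{n\ell}$, a step where $\delta$-connectedness and the compactness-based weighted estimate must be woven together to control the contribution near the boundary.
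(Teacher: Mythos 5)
Your final step has a real gap, and you flag it yourself: after propagating strong $L^2$-convergence through balls \emph{compactly contained} in $\Omega_{n\ell}$, you only obtain strong convergence on compact subsets, and the assertion that the weighted bound $\int_{\Omega_{n\ell}} h\,u_n^2\to 0$ ``handles the portion where $h$ is bounded below'' does not close the argument unless one actually proves that $h$ is bounded away from zero near $\partial\Omega_{n\ell}$. The paper avoids the issue by propagating through sets that are open in $\Omega_{n\ell}$ and may touch its boundary: the seed is $A^0_\delta=\{x\in\Omega_{n\ell}:dist(x,\Omega_\ell)<\delta\}$, where $\int_{\Omega_{n\ell}}\int_{\Omega_\ell}J(x-y)u_n(x)^2\,dy\,dx\to 0$ forces $\int_{A^0_\delta}u_n^2\to 0$ because $g(x)=|B_{2\delta}(x)\cap\Omega_\ell|$ is continuous and strictly positive on the compact set $\overline{A^0_\delta}$, hence bounded below; then the strips $A^j_\delta=\{x\in\Omega_{n\ell}\setminus\overline{\bigcup_{i<j}A^i_\delta}:dist(x,\bigcup_{i<j}A^i_\delta)<\delta\}$ exhaust $\Omega_{n\ell}$ in finitely many steps by boundedness and $\delta$-connectedness. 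If you replace your interior balls by such $\Omega_{n\ell}$-intersected sets, your propagation argument does close.

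In fact, your first step already does essentially all the work once you make the one missing observation: for $x\in\overline{\Omega_{n\ell}}$ the set $B_{2\delta}(x)\cap\Omega_{n\ell}$ is nonempty and open, so $\phi(x):=|B_{2\delta}(x)\cap\Omega_{n\ell}|$ is continuous and strictly positive on the compact set $\overline{\Omega_{n\ell}}$, hence $\phi\ge m>0$ there. Combined with $(J1)$ this gives $h(x)=\int_{\Omega_{n\ell}}J(x-y)\,dy\ge Cm>0$ uniformly on $\Omega_{n\ell}$, and then $\int_{\Omega_{n\ell}}h\,u_n^2\to 0$ alone already yields $\int_{\Omega_{n\ell}}u_n^2\to 0$, with no propagation, no use of $(P1)$, and no use of the strong convergence on $\Omega_\ell$. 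This is genuinely different from (and shorter than) the paper's strip-by-strip argument, and amounts to the nonlocal Poincar\'e--Wirtinger inequality of \cite{ElLibro} combined with the fact that $u_n\rightharpoonup 0$ in $L^2(\Omega_{n\ell})$ forces $\frac{1}{|\Omega_{n\ell}|}\int_{\Omega_{n\ell}}u_n\to 0$. You identified the right tools (the algebraic identity for the nonlocal energy and the compactness of $T_J$) but stopped one observation short of making the first step self-contained.
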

\begin{proof}
From \eqref{eq:nolcalomega},   the convergence of $\{u_n\}$ and property (J2),  we easily find that 
\begin{equation}
 \label{eq:solonl}
 \lim_{n\to\infty}\int_{\Omega_{n\ell}}\int_{\Omega_{\ell}} J(x-y)u_n(x)^2dy dx=0.
\end{equation}
Let us define  
 $$
 A^0_\delta=\Big\{x\in \Omega_{n\ell}: dist(x,\Omega_{\ell})<\delta \Big\}.
 $$
Notice that thanks to property $(P1)$ and to the fact that  $\Omega_{n\ell}$ is open we see that  $A^0_\delta$ is open and non empty.  In particular it has positive n-dimensional measure.   For any $x\in \overline{A^0_{\delta}}$
we consider the continuous and \emph{strictly} positive function  $g(x)=|B_{2\delta}(x)\cap \Omega_{\ell}|$. Since 
$\overline{A^0_{\delta}}$ is a compact set, there exists a constant $m>0$ such that $g(x)\ge m$ for any  $x\in \overline{A^0_{\delta}}$. 
 As a consequence 
  $$
 \int_{\Omega_{n\ell}}\int_{\Omega_{\ell}} J(x-y)u_n(x)^2dy dx \ge 
 \int_{A^0_\delta}\int_{B_{2\delta}(x)\cap \Omega_{\ell}} J(x-y)u_n(x)^2dy dx \ge
 mC\int_{A^0_\delta}u_n(x)^2 dx.
 $$ 
 and therefore, thanks to  \eqref{eq:solonl}, $u_n\to 0$ in $L^2(A^0_\delta)$.
 In order to iterate 
 this argument we notice that at this point we know that 
 $u_n\to 0$ strongly in $A^0_\delta$ and weakly in $\Omega_{n\ell} \setminus \overline{A^0_\delta}$, hence again from \eqref{eq:nolcalomega} we get
\begin{equation}
 \label{eq:soloadelta}
 \lim_{n\to\infty}\int_{\Omega_{n\ell} \setminus \overline{A^0_\delta}}\int_{ A^0_\delta} J(x-y)u_n(x)^2dy dx=0.
\end{equation}
Since $\Omega_{n\ell}$ is $\delta$ connected, $dist(\Omega_{n\ell} \setminus \overline{A^0_\delta}, A^0_\delta)<\delta$. Considering now
 $$
 A^1_\delta=\{x\in \Omega_{n\ell}\setminus \overline{A^0_\delta}: dist(x,A^0_\delta)<\delta\},
 $$
 and proceeding as before, we obtain, from \eqref{eq:soloadelta}, that 
 $u_n\to 0$ strongly in $A^1_{\delta}$. This argument can be repeated and giving strong converge in $L^2(A^j_\delta)$ for 
 $$
 A^j_\delta=\left\{x\in \Omega_{n\ell}\setminus\overline{ \bigcup_{0\le i< j} A^i_\delta}: dist \Big(x, \bigcup_{0\le i< j} A^i_\delta \Big)<\delta\right\}.
 $$
 Since $\Omega_{n\ell}$ is bounded, we have, for a finite number $J\in \N$, $$\Omega_{n\ell}=\bigcup_{0\le i< J} A^i_\delta$$ and therefore  the proof is complete.  \end{proof}

As we have mentioned in the Introduction our goal is to show that given $f \in L^2 (\Omega)$ with $\int_\Omega f =0$ there exists a unique minimizer of the energy functional $E_{Neu}(u)$ in the space $H_{Neu}$.

To use the direct method of calculus of variations to obtain the result we have to show that
$E_{Neu}(u)$ is coercive and weakly lower semicontinuous.

To this end we prove a key result, a Poincar\'e-Wirtinger type inequality holds.

\begin{lemma}\label{lema.Poincare} There exists $c>0$ such that 
$$
\begin{array}{l}
\displaystyle
\int_{\Omega_\ell} |\nabla u|^2 + \frac{1}{2 }\int_{\Omega_{n\ell}}\int_{\Omega_{n\ell}} J(x-y)(u(y)-u(x))^2\, dy\, dx
+ \frac{1}{2}\int_{\Omega_{n\ell}}\int_{\Omega_\ell} G(x,y)|u(y)-u(x)|^2\, dy\,  dx \\[10pt]
\qquad \qquad \qquad \displaystyle \geq \displaystyle c\int_{\Omega}u^2 (x) \, dx
\end{array}
$$
for every function $u\in H_{Neu}$.
\end{lemma}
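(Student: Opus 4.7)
The natural strategy is a proof by contradiction based on Rellich's theorem, weak lower semicontinuity, and the propagation mechanism encoded in Lemma~\ref{lema:contagio_escalar}. Suppose no such $c>0$ exists; then there is a sequence $\{u_n\}\subset H_{Neu}$ with $\|u_n\|_{L^2(\Omega)}=1$, $\int_\Omega u_n=0$, and each of the three terms on the left of the claimed inequality tending to zero along $u_n$. Since $\|\nabla u_n\|_{L^2(\Omega_\ell)}\to 0$ and $\|u_n\|_{L^2(\Omega_\ell)}\le 1$, the restriction $u_n|_{\Omega_\ell}$ is bounded in $H^1(\Omega_\ell)$; using that $\Omega_\ell$ is Lipschitz and connected, Rellich's theorem produces a subsequence converging strongly in $L^2(\Omega_\ell)$ to a constant $k_\ell$. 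On $\Omega_{n\ell}$, boundedness gives $u_n\rightharpoonup v$ in $L^2(\Omega_{n\ell})$; weak lower semicontinuity of the quadratic $J$-form together with Lemma~\ref{lema:J0implica_cte} (which uses the $\delta$-connectedness of $\Omega_{n\ell}$) forces $v\equiv k_{n\ell}$ constant.

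Next, I would derive $k_\ell=k_{n\ell}$ from the coupling term. By $(G1)$ and $(P1)$ there exist open sets $A\subset\Omega_{n\ell}$ and $B\subset\Omega_\ell$ of positive measure with $\|x-y\|<2\delta$ throughout $A\times B$, so that $G(x,y)\ge C$ there. This yields $\int_A\int_B(u_n(x)-u_n(y))^2\,dy\,dx\to 0$. Setting $\alpha_n=\int_A u_n$, $\beta_n=\int_B u_n$, $a_n=\int_A u_n^2$, and $b_n=\int_B u_n^2$, the weak convergence on $A$ and the strong convergence on $B$ give $\alpha_n\to|A|k_{n\ell}$, $\beta_n\to|B|k_\ell$, and $b_n\to|B|k_\ell^2$, so the identity $|B|a_n-2\alpha_n\beta_n+|A|b_n\to 0$ forces $a_n\to|A|(2k_\ell k_{n\ell}-k_\ell^2)$. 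Combining with the weak-semicontinuity bound $\liminf a_n\ge|A|k_{n\ell}^2$ yields $(k_\ell-k_{n\ell})^2\le 0$, hence $k_\ell=k_{n\ell}=:k$.

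Setting $w_n:=u_n-k$, the sequence $w_n$ converges to $0$ strongly in $L^2(\Omega_\ell)$ and weakly in $L^2(\Omega_{n\ell})$, while all three energy integrals still vanish. The main step is to upgrade the weak convergence on $\Omega_{n\ell}$ to strong. I would first mimic the opening step in the proof of Lemma~\ref{lema:contagio_escalar}, but replacing $J$ with $G$: by $(G1)$ and $(P1)$, the collar $\tilde A^0_\delta=\{x\in\Omega_{n\ell}:dist(x,\Omega_\ell)<\delta\}$ is a nonempty open set on whose closure $|B_{2\delta}(x)\cap\Omega_\ell|\ge m>0$, by the same compactness argument as in Lemma~\ref{lema:contagio_escalar}. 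Combining the inequality
\[
C\!\int_{\tilde A^0_\delta}\!\int_{B_{2\delta}(x)\cap\Omega_\ell}(w_n(x)-w_n(y))^2\,dy\,dx\le \int_{\Omega_{n\ell}}\!\int_{\Omega_\ell}G(x,y)(w_n(x)-w_n(y))^2\,dy\,dx\to 0
\]
with the strong convergence $w_n\to 0$ in $L^2(\Omega_\ell)$ and the algebraic bound $(w_n(x)-w_n(y))^2\ge\tfrac{1}{2}w_n(x)^2-w_n(y)^2$ gives $w_n\to 0$ strongly in $L^2(\tilde A^0_\delta)$. With the $J$-integral over $\Omega_{n\ell}\times\Omega_{n\ell}$ still vanishing, the $\delta$-connectedness of $\Omega_{n\ell}$ and assumptions $(J1)$--$(J2)$ then drive the iterative propagation of strong convergence through successive collars $A^1_\delta,A^2_\delta,\dots$ exactly as in Lemma~\ref{lema:contagio_escalar}, exhausting $\Omega_{n\ell}$ in finitely many steps.

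Combining all the above, $u_n\to k$ strongly in $L^2(\Omega)$; the mean-zero constraint $\int_\Omega u_n=0$ forces $k=0$, contradicting $\|u_n\|_{L^2(\Omega)}=1$. The main technical obstacle is the propagation step: since the bridge between $\Omega_\ell$ and $\Omega_{n\ell}$ is mediated by $G$ rather than $J$, Lemma~\ref{lema:contagio_escalar} does not apply verbatim, and one must first carry out a single $G$-driven propagation to seed strong convergence on the collar $\tilde A^0_\delta$ before iterating inside $\Omega_{n\ell}$ with the $J$-kernel.
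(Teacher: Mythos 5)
Your proof is correct, but it diverges from the paper's own argument at the decisive final step, so it is worth noting the comparison. Up to the point where the weak limits on $\Omega_\ell$ and $\Omega_{n\ell}$ are identified as constants $k_\ell$, $k_{n\ell}$, the structure matches the paper: the contradiction setup, Rellich on $\Omega_\ell$, and the use of weak lower semicontinuity of the $J$-form (via $(J2)$) together with Lemma~\ref{lema:J0implica_cte} to make the weak limit on $\Omega_{n\ell}$ constant. Two differences follow. First, your identification $k_\ell=k_{n\ell}$ goes through an explicit expansion with $\alpha_n,\beta_n,a_n,b_n$ over sets $A\subset\Omega_{n\ell}$, $B\subset\Omega_\ell$; the paper instead gets it in one line by observing that $u_n(y)-u_n(x)\rightharpoonup k_1-k_2$ in $L^2(\Omega_{n\ell}\times\Omega_\ell)$ and invoking weak lower semicontinuity of the $G$-weighted $L^2$ form. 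Both are valid; yours is more elementary but longer. Second, and more substantively, once $k=0$ is known, the paper concludes immediately by citing the nonlocal Poincar\'e--Wirtinger inequality on $\Omega_{n\ell}$ from \cite{ElLibro}, which directly contradicts $\int_{\Omega_{n\ell}} u_n^2\to 1$. You instead upgrade the weak convergence on $\Omega_{n\ell}$ to strong convergence by a propagation argument: a first $G$-driven step to seed strong convergence on the collar $\tilde A^0_\delta$, followed by a $J$-driven iteration through successive collars as in Lemma~\ref{lema:contagio_escalar}. You correctly identify that Lemma~\ref{lema:contagio_escalar} does not apply verbatim (its hypothesis involves $\int_{\Omega_{n\ell}}\int_{\Omega}J$, and the cross term $\int_{\Omega_{n\ell}}\int_{\Omega_\ell}J$ is not controlled here, only the $G$-term is), and the fix you propose is exactly right. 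Your route is more self-contained and is in fact closer in spirit to the paper's \emph{quantitative} companion result, Lemma~\ref{lema.Poincare.con.cte}, which also uses the $G$-coupling to seed an estimate on a small set $B\subset\Omega_{n\ell}$ and then propagates through $\Omega_{n\ell}$ using the $\delta$-tree structure. What the paper's shorter route buys is brevity (delegating the work to a known result); what your route buys is a self-contained argument whose ideas are, with a little bookkeeping, convertible into an explicit constant, which the contradiction-based proof in the paper cannot deliver on its own.
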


\begin{proof} 
First, let us point out that when $G(x,y) = J(x-y)$, $\Omega_\ell\subset \Omega$ is a smooth 
convex subdomain we can use a result from \cite{GQR}. 
In this case the norm in $H_{Neu}$
bounds a pure nonlocal seminorm.  
There exists $c >0$ such that,
\begin{equation}
\label{eq:fundamental.inequality}
\displaystyle
\int_{\Omega_\ell} |\nabla u|^2 + \frac{1}{2 }\int_{\Omega_{n\ell}}\int_{\Omega} J(x-y)(u(y)-u(x))^2\, dy \, dx
\geq \displaystyle c\int_{\Omega}\int_{\Omega} J(x-y)(u(y)-u(x))^2\, dy\, dx,
\end{equation}
for every $u\in\mathcal{H}:=\{u\in L^2(\Omega): u|_{\Omega_{\ell}}\in H^1(\Omega_{\ell}) \}$. 
Then the desired Poincar\'e-Wirtinger inequality follows from an analogous inequality for the purely nonlocal energy:  there exists $c>0$ such that
$$
\displaystyle
\int_{\Omega}\int_{\Omega} J(x-y)(u(y)-u(x))^2\, dy \, dx
\geq \displaystyle c \int_{\Omega}u^2 (x) \, dx
$$
for every function $u\in\mathcal{H}$ such that $\int_\Omega u =0$; see~\cite{ElLibro}.

To obtain the Poincar\'e-Wirtinger inequality in the general case we argue by contradiction. 
Assume that there is a sequence $u_n$ such that
\begin{equation} \label{eq.gradientes}
\int_{\Omega_\ell} |\nabla u_n|^2 (x) \, dx \to 0,
\end{equation}
\begin{equation} \label{eq.J}
\frac{1}{2 }\int_{\Omega_{n\ell}}\int_{\Omega_{n\ell}} J(x-y)|u_n(y)-u_n(x)|^2\, dy\, dx \to 0,
\end{equation}
\begin{equation} \label{eq.G}
\frac{1}{2}\int_{\Omega_{n\ell}}\int_{\Omega_\ell} G(x,y)|u_n(y)-u_n(x)|^2\, dy\,  dx \to 0,
\end{equation}
\begin{equation} \label{eq.=1}
\int_{\Omega}|u_n|^2 (x) \, dx = 1
\end{equation}
and 
\begin{equation} \label{eq.int-0}
\int_\Omega u_n (x) \, dx =0.
\end{equation}

Since the $L^2$ norm is bounded we can extract a subsequence such that
$$
u_n \rightharpoonup u \qquad \mbox{weakly in } H^1 (\Omega_{\ell}),  
$$
then, from \eqref{eq.gradientes} we get that there is a constant $k_1$ such that
$$
u_n \to k_1 \qquad \mbox{strongly in } H^1 (\Omega_{\ell}).  
$$
Now, from \eqref{eq.J} (using again that the $L^2$ norm is bounded) we can refine the subsequence
to obtain that 
$$
u_n \rightharpoonup k_2 \qquad \mbox{weakly in } L^2 (\Omega_{n\ell}),  
$$
for some constant $k_2$. 

From \eqref{eq.G} we conclude that 
$$
\frac{1}{2}\int_{\Omega_{n\ell}}\int_{\Omega_\ell} G(x,y)|k_1-k_2|^2\, dy\,  dx
\leq \liminf_{n \to \infty}  \frac{1}{2}\int_{\Omega_{n\ell}}\int_{\Omega_\ell} G(x,y)|u_n(y)-u_n(x)|^2\, dy\,  dx =0,
$$
and hence
$$
k_1 = k_2. 
$$
We have
$$
|\Omega_\ell | k_1 = \lim_n \int_{\Omega_\ell} u_n (x) \, dx  
$$
and 
$$
|\Omega_{n\ell} | k_2 = \lim_n \int_{\Omega_{n\ell}} u_n (x) \, dx .
$$
Then, adding the previous identities and using that $k_1=k_2$, from \eqref{eq.int-0}, we obtain
$$
k_1=k_2=0. 
$$
From the strong convergence in $H^1(\Omega_\ell)$ we obtain 
$$
 \lim_n \int_{\Omega_\ell} |u_n|^2 (x) \, dx  =0
$$
and hence from \eqref{eq.=1} we get
$$
\int_{\Omega_{n\ell}}|u_n|^2 (x) \, dx \to 1.
$$
This contradicts the result in \cite{ElLibro} since there exists $c>0$ such that
$$
\displaystyle
0= \lim_n \int_{\Omega_{n\ell}}\int_{\Omega_{n\ell}} J(x-y)(u_n(y)-u_n(x))^2\, dy \, dx
\geq \displaystyle c  \lim_n \int_{\Omega_{n\ell}}\left| u_n (x) -  \dashint_{\Omega_{n\ell}} u_n    \right|^2 \, dx = 1.
$$
This contradiction finishes the proof.
\end{proof}

The proof of Lemma \ref{lema.Poincare} is made by contradiction and therefore
it does not provide an estimate for the constant. In what follows we present an
alternative proof that provides an explicit constant in terms of the relevant
quantities, the geometries of the involved domains $\Omega_\ell$ and 
$\Omega_{n\ell}$, and the kernels $J$ and $G$ (in fact the constant depends on 
the parameter $\delta $ and $\min J$, $\min G$ for points $(x,y)$ such that
$|x-y| < 2 \delta$).  In order to do so, we introduce the following remark. 

 \begin{remark}[Geometric structure of bounded $\delta-connected$ sets]
 \label{rem:trees}
 Let $D$ a bounded $\delta-connected$ set. Boundedness implies that it is possible to find a \emph{finite} collection $\mathcal{C}$ of open sets, each of them of diameter less than or equal to $\delta/2$, such that  
 $$
 D= \bigcup_{B\in \mathcal{C}} B.
 $$  
 We introduce a tree structure in $\mathcal{C}$ in the following way: for a fixed $B\in \mathcal{C}$,
 that will be called the root of the tree, we set $B_{0}=B$. Now we pick $B_1\neq B_0$, $B_1\in \mathcal{C}$ such that 
 $dist(B_{0}, B_{1})<\delta$ (if more than one set has this property we choose any of them). Now we proceed in the same way, picking $B_2\in \mathcal{C}\setminus\{B_0,B_1\}$ such that $dist(B_{1},B_{2})<\delta$ and repeat the procedure until we reach a set $B_k$ such that either $\emptyset=\mathcal{C}\setminus\{B_0,B_1,\cdots, B_k\}$  or $\emptyset\neq \mathcal{C}\setminus\{B_0,B_1,\cdots, B_k\}$ and all the elements in $\mathcal{C}\setminus\{B_0,B_1,\cdots, B_k\}$ 
 are at distance greater or equal than $\delta$ from $B_k$. In the first case we are done and  we introduce a (total) order in $\mathcal{C}$ given by $B_i<B_j$ if $B_j$ was chosen later than $B_i$. In the second case, we call the obtained totally ordered set $\mathcal{C}^0_1$ a branch with root $B_0$. Then we continue with further branches. First, repeating our procedure, we check if with the remaining elements $\mathcal{C}\setminus \mathcal{C}^0_1$, it is possible to get another branch $\mathcal{C}^0_2$ with the same root $B_0$. When all branches with root $B_0$ are exhausted, we look within the set of remaining elements for potential branches with root $B_1$, $B_2$, etc. until we reach $B_{k-1}$. At this point, and  iteratively, all elements of all generated branches are tested as roots of new branches. Since $\mathcal{C}$ is finite, this procedure reaches an end in a finite number of steps yielding a partially orderer set $\mathcal{T}$.
 Naturally $\mathcal{T}\subset \mathcal{C}$, we claim that $\mathcal{T}= \mathcal{C}$. Otherwise, we consider the non empty open sets 
 $D_1=\cup_{B\in \mathcal{C}\setminus \mathcal{T}}B$ , $D_2=\cup_{B\in \mathcal{T}}B$ and then $D=D_1\cup D_2$ with $dist(D_1,D_2)>\delta$ a contradiction since $D$ is $\delta-connected$.
 
In $\mathcal{T}$ we define the natural \emph{partial} order and  $\mathcal{T}$ is called a $\delta-tree$. Notice that for two 
\emph{consecutive} elements $B<\tilde B$ we have $diam(B,\tilde B)<2\delta$. In particular, from our hypothesis on the kernel $J$, this implies that
$J(x-y)>0$ for $x\in B$, $y\in \tilde B$.

 For further use we introduce the following notations: we let the degree of a root as the number of branches emerging from that root while the degree of $\mathcal{T}$ is the maximum degree of all the roots belonging to $\mathcal{T}$. Finally, the length of a branch is the cardinal of that branch excluding its root.
 \end{remark}

 \begin{remark}
 \label{rem:cubo}
 In simple cases it is possible to easily characterize potential tree structures on domains.
 In a square $D \subset \RR^2$ (or a cube in $\RR^N$) we can build $\mathcal{T}$ with a single branch. The cardinal of $\mathcal{T}$ is bounded by $\frac{8^2|D|}{\delta^2}$ (or by $\frac{8^N|D|}{\delta^N}$ in $\RR^N$). Moreover, even rather complicated structures can be covered by simple trees if the scale related to 
 $\delta$ is large enough as it is shown in Figures \ref{fig.1} and \ref{fig.2}. 
  \end{remark}
 \begin{figure}
\includegraphics[scale=.25]{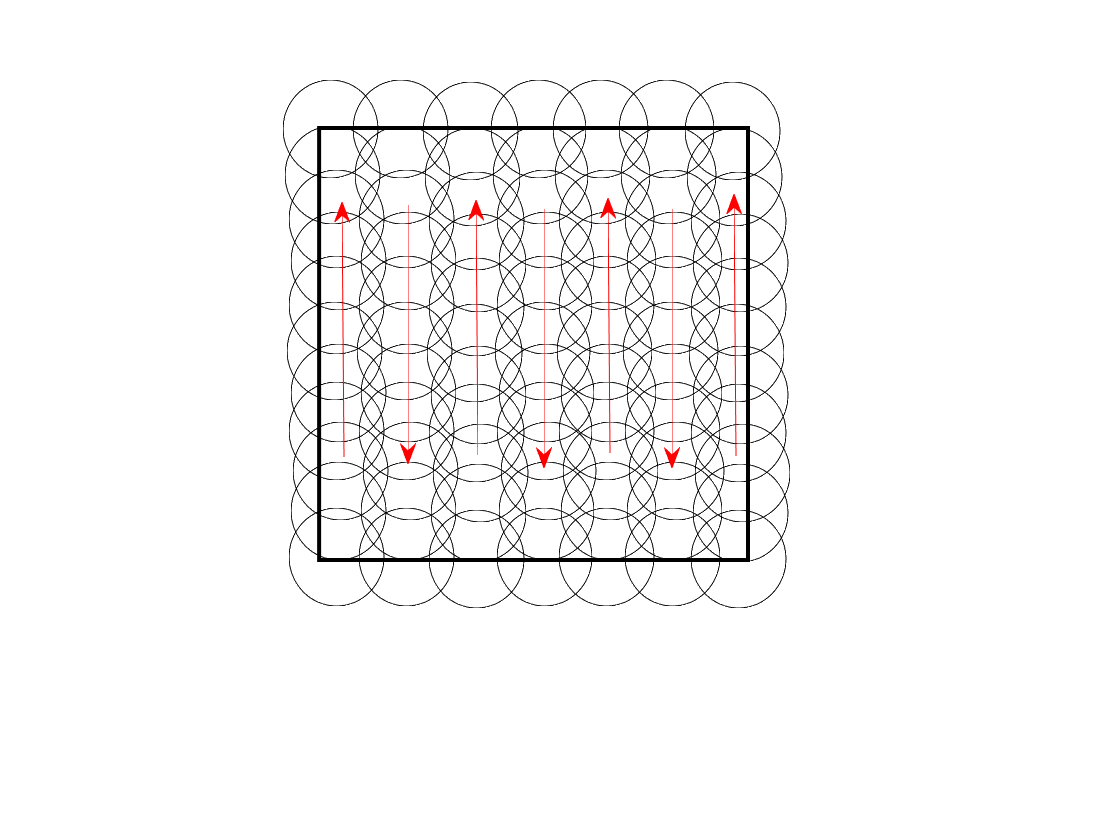}
\caption{A tree with a single branch in a square $\Omega$. In this example a branch is built with a collection of sets $\mathcal{C}=\{B_i\}$ given by the intersection of  $\Omega$ with balls of diameter $\delta/2$. A possible ordering is given by the path along the red arrows. }
 \label{fig.1}
\end{figure}
 \begin{figure}
\includegraphics[scale=.25]{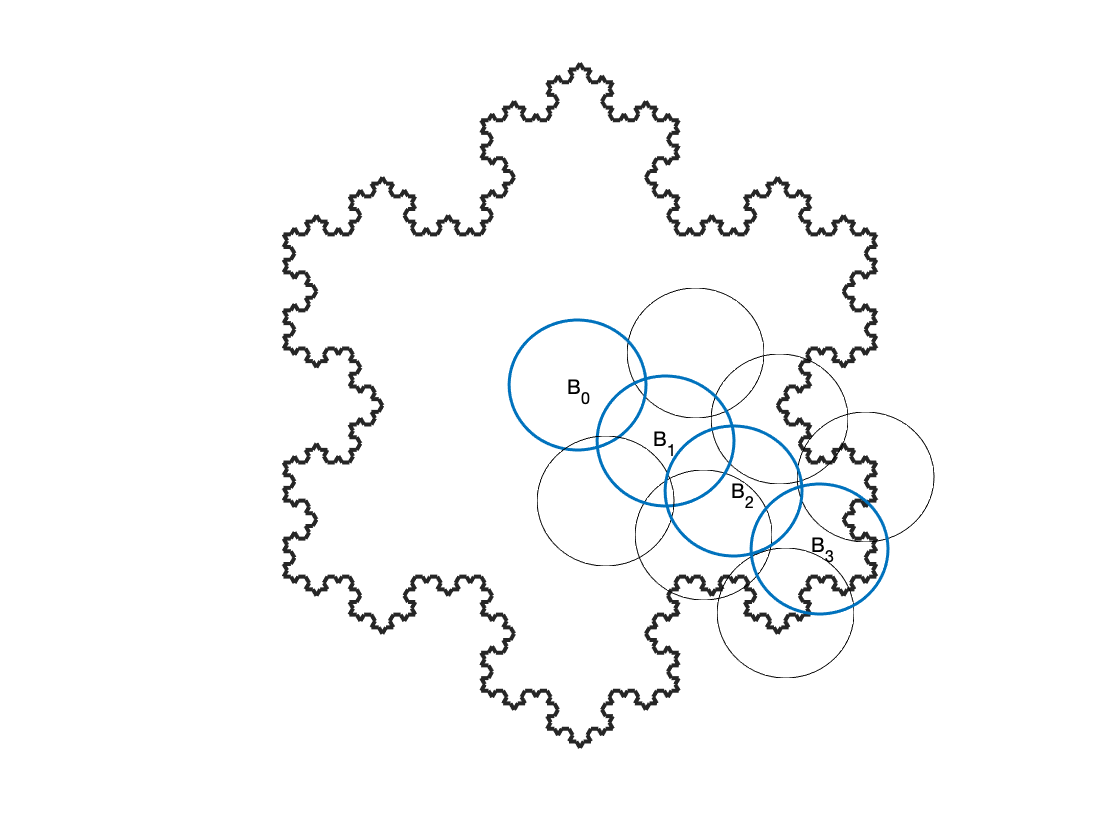}
\caption{A fractal like domain with a simple tree structure with root $B_0$. Only a single branch $\mathcal{C}_0^1$ is shown in the figure together with a few neighboring elements of $\mathcal{C}_1^0$. Clearly, details smaller than the "scale" size $\delta$ are ignored by the tree structure. }
 \label{fig.2}
\end{figure}

With this structure at hand, let us prove the following lemma.

\begin{lemma}
\label{lemma:branch}
Let $B^*=\{B_0,B_1,\cdots,B_N\}$, a finite collection of  open sets $B_i\subset \RR^n$, such that
$diam(B_i\cup B_{i+1})<2\delta$.  Then, for any $u:\cup_{j} B_j\to \RR$, it holds
$$
\begin{array}{l}
\displaystyle 
\sum_{k=0}^N\int_{B_k} |u(x)|^2 \, dx  
\le \left( \sum_{k=0}^N 2^k\frac{|B_k|}{|B_0|} \right)\int_{B_0}u^2 \, dx \nonumber\\[10pt]
\qquad \qquad \qquad \qquad \quad \displaystyle  + \! \frac{1}{m_J}\sum_{i=1}^{N}2^i\left(\sum_{k=i}^N\frac{|B_{k}|}{|B_{k-i}||B_{k-i+1}|}  \int_{B_{k-i}} \int_{B_{k-i+1}} \!\!\!\! \!\! J(x-y)(u(x)-u(y))^2 \, dx\, dy\right),\nonumber
\end{array}
$$
where $$0<m_J=\inf_{|z|<2\delta} J(z).$$
\end{lemma}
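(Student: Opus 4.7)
My plan is to control $\int_{B_k}u^2$ by propagating $L^2$-mass from $B_0$ along the chain $B_0,B_1,\ldots,B_N$, using the jump terms involving $J$ as the transport cost. The crucial geometric observation is that, by hypothesis on the collection, $\mathrm{diam}(B_{k-1}\cup B_k)<2\delta$ for every consecutive pair, so that $|x-y|<2\delta$ for all $(x,y)\in B_{k-1}\times B_k$, and hence $J(x-y)\ge m_J>0$ (with positivity of $m_J$ guaranteed by $(J1)$). This replaces the usual Poincar\'e inequality on small sets by a kernel-based comparison.

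The first step is to derive a one-step recursion. Starting from the elementary bound $u(y)^2\le 2u(x)^2+2(u(x)-u(y))^2$, I integrate in $x\in B_{k-1}$ and $y\in B_k$, divide by $|B_{k-1}|$, and use $1\le J(x-y)/m_J$ on the rectangle $B_{k-1}\times B_k$ to arrive at
\[
I_k\;\le\;\frac{2|B_k|}{|B_{k-1}|}\,I_{k-1}+\frac{2}{|B_{k-1}|\,m_J}\,E_k,
\]
where $I_k:=\int_{B_k}u^2\,dx$ and $E_k:=\int_{B_{k-1}}\int_{B_k} J(x-y)(u(x)-u(y))^2\,dx\,dy$. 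This is the heart of the argument; the rest is algebra.

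The second step is to iterate. Telescoping products give $\prod_{j=1}^{k}\frac{2|B_j|}{|B_{j-1}|}=\frac{2^k|B_k|}{|B_0|}$, and a short induction then yields
\[
I_k\;\le\;\frac{2^k|B_k|}{|B_0|}\,I_0+\sum_{j=1}^{k}\frac{2^{k-j+1}|B_k|}{|B_{j-1}||B_j|\,m_J}\,E_j.
\]
Summing this over $k=0,\ldots,N$ (the $k=0$ summand is exactly the trivial term $\frac{2^0|B_0|}{|B_0|}I_0$ in the first factor) and swapping the double sum via $\sum_{k=1}^{N}\sum_{j=1}^{k}=\sum_{j=1}^{N}\sum_{k=j}^{N}$, followed by the reindexing $i=k-j+1$ in the inner sum, matches each term with the corresponding one in the statement, with inner coefficients $\frac{|B_k|}{|B_{k-i}||B_{k-i+1}|}$ and outer factor $2^i/m_J$.

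I do not anticipate a structural obstacle: the one-step inequality is elementary, the induction is mechanical, and the index swap is routine. The only point to be careful with is the combinatorial factor $2^{k-j+1}$ (rather than $2^{k-j}$) in the iterated estimate; checking the base cases $k=1,2$ confirms the exponent, and it is precisely this extra factor of $2$ that, after the substitution $i=k-j+1$, produces the $2^i$ appearing in the statement of the lemma.
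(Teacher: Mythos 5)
Your proof is correct and follows essentially the same path as the paper's: the one-step comparison between $\int_{B_k}u^2$ and $\int_{B_{k-1}}u^2$ via the elementary inequality $u(y)^2\le 2u(x)^2+2(u(x)-u(y))^2$ averaged over $B_{k-1}\times B_k$, iterated and then summed with an index swap; the only cosmetic difference is that you replace $(u(x)-u(y))^2$ by $J(x-y)(u(x)-u(y))^2/m_J$ at each step, whereas the paper carries $\Delta^2$ through and performs this replacement once at the end.
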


\begin{proof}
Since
$$
\int_{B_1} |u(x)|^2 \, dx = \frac{1}{|B_0|} \int_{B_0} \int_{B_1} |u(x)|^2 \, dx\, dy  ,
$$ 
we have
$$
\int_{B_1} |u(x)|^2 \, dx   \leq
\frac{2}{|B_0|}  \int_{B_0} \int_{B_1} |u(x)-u(y)|^2 \, dx\, dy + 2\frac{|B_1|}{|B_0|} \int_{B_0}  |u(y)|^2 \, dx\, dy ,
$$
and similarly
$$
\int_{B_k} |u(x)|^2 \, dx   \leq
\frac{2}{|B_{k-1}|}  \int_{B_{k-1}} \int_{B_k} |u(x)-u(y)|^2 \, dx\, dy + 2\frac{|B_k|}{|B_{k-1}|} \int_{B_{k-1}}  |u(y)|^2 \, dx\, dy .
$$
As a consequence, calling $\Delta=u(x)-u(y)$, we get
$$
\begin{array}{l}
\displaystyle 
\int_{B_k} |u(x)|^2 \, dx   \leq \sum_{i=1}^k\frac{2^i|B_{k}|}{|B_{k-i}||B_{k-i+1}|}  \int_{B_{k-i}} \int_{B_{k-i+1}} \Delta^2 \, dx\, dy + 2^k\frac{|B_k|}{|B_0|}\int_{B_0}u^2 \, dx,
\end{array}
$$
and therefore, we obtain,
$$
\begin{array}{ll}
\displaystyle
\sum_{k=0}^N & \displaystyle \nonumber \int_{B_k} |u(x)|^2 \, dx   
\\[10pt]
& \displaystyle \leq \left( \sum_{k=0}^N 2^k\frac{|B_k|}{|B_0|} \right)\int_{B_0}u^2 \, dx + \sum_{k=1}^{N}\left(\sum_{i=1}^k\frac{2^{i}|B_{k}|}{|B_{k-i}||B_{k-i+1}|}  \int_{B_{k-i}} \int_{B_{k-i+1}} \Delta^2 \, dx\, dy\right)  \nonumber \\[10pt]
& \displaystyle =\left( \sum_{k=0}^N 2^k\frac{|B_k|}{|B_0|} \right)\int_{B_0}u^2 \, dx + \sum_{i=1}^{N}2^i\left(\sum_{k=i}^N\frac{|B_{k}|}{|B_{k-i}||B_{k-i+1}|}  \int_{B_{k-i}} \int_{B_{k-i+1}} \Delta^2 \, dx\, dy\right).\nonumber 
\end{array}
$$
Hence, the Lemma follows by using that $1\le J(x-y)/m_J$ for any $x,y$ belonging to consecutive sets $B_i$.
\end{proof}

The previous lemma in particular says that in a branch of a tree structure of sets (as the ones described in Remark \ref{rem:trees}), the $L^2$ norm of a function defined on that branch can be bounded in terms of the $L^2$ norm on the root of the branch plus an energy involving the nonlocal operator $J$ along the branch. Taking into account that in a tree
$\mathcal{T}$ roots of branches are members of previous branches, it is clear that the $L^2$ norm of a function defined on $\mathcal{T}$ can be bounded in a similar fashion in terms of the $L^2$ norm on the root of the tree and the nonlocal energy. Moreover, if needed, explicit constants can be tracked back along the tree structure. For the sake of clarity, we do not state the next result with such a generality, although we provide below some simple examples with explicit constants.

\begin{corollary}
\label{coro:solonolocal}
For any $u\in L^2(\Omega_{n\ell})$ we consider a tree structure $\mathcal{T}$ in $\Omega_{n\ell}$ with root $B$, then there exists a constant $C=C(\mathcal{T},m_J)$ such that
$$
\int_{\Omega_{n\ell}} u^2\, dx\le C\left( \int_{B} u^2\, dx + \int_{\Omega_{n\ell}}\int_{\Omega_{n\ell}} J(x-y) (u(x)-u(y))^2 \, dx \, dy \right).
$$ 
\end{corollary}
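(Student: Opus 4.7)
The idea is to exploit the finite tree structure $\mathcal{T}$ guaranteed by Remark \ref{rem:trees} and iterate Lemma \ref{lemma:branch} branch by branch. Since $\Omega_{n\ell}=\bigcup_{B\in\mathcal{T}}B$ and the tree is the (disjoint) union of finitely many branches $\mathcal{C}_1,\mathcal{C}_2,\dots,\mathcal{C}_M$, each of which is a finite ordered collection $\{B_0^{(j)},B_1^{(j)},\dots,B_{N_j}^{(j)}\}$ with $\mathrm{diam}(B_k^{(j)}\cup B_{k+1}^{(j)})<2\delta$, it is enough to bound $\int_{B}u^2$ for every $B\in\mathcal{T}$ in terms of the right-hand side of the stated inequality and then sum over the finitely many members of $\mathcal{T}$.

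First I would handle the principal branch $\mathcal{C}_1$, whose root is the prescribed $B$. Applying Lemma \ref{lemma:branch} directly to $\mathcal{C}_1$, and using that every double integral $\int_{B_{k-i}^{(1)}}\!\int_{B_{k-i+1}^{(1)}}J(x-y)(u(x)-u(y))^2$ is majorized by $\int_{\Omega_{n\ell}}\!\int_{\Omega_{n\ell}}J(x-y)(u(x)-u(y))^2$, we obtain
$$
\sum_{k=0}^{N_1}\int_{B_k^{(1)}}u^2\,dx \le \alpha_1\int_{B}u^2\,dx+\beta_1\int_{\Omega_{n\ell}}\!\int_{\Omega_{n\ell}}J(x-y)(u(x)-u(y))^2\,dx\,dy,
$$
with constants $\alpha_1,\beta_1$ depending only on $\mathcal{C}_1$ and $m_J$. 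In particular, for each $B_k^{(1)}$ (a potential root of a secondary branch) we get an individual bound of $\int_{B_k^{(1)}}u^2$ by the same right-hand side.

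Next, I would proceed inductively on the branch order: once every member of branches $\mathcal{C}_1,\dots,\mathcal{C}_{j-1}$ has been bounded by $C^{(j-1)}\big(\int_{B}u^2+\int\!\int J(x-y)(u(x)-u(y))^2\big)$, take the branch $\mathcal{C}_j$, whose root $B_0^{(j)}$ lies in some earlier branch and hence already admits such a bound. Applying Lemma \ref{lemma:branch} to $\mathcal{C}_j$ gives
$$
\sum_{k=0}^{N_j}\int_{B_k^{(j)}}u^2\,dx \le \alpha_j\int_{B_0^{(j)}}u^2\,dx+\beta_j\int_{\Omega_{n\ell}}\!\int_{\Omega_{n\ell}}J(x-y)(u(x)-u(y))^2\,dx\,dy,
$$
and substituting the already obtained bound for $\int_{B_0^{(j)}}u^2$ yields the analogous estimate for every set in $\mathcal{C}_j$.

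Because $M$ is finite (indeed $|\mathcal{T}|<\infty$), the induction terminates in finitely many steps, producing a single finite constant $C=C(\mathcal{T},m_J)$ such that $\int_B u^2\,dx \le C\big(\int_{B}u^2+\int\!\int J(x-y)(u(x)-u(y))^2\big)$ for every $B\in\mathcal{T}$. Summing these inequalities over all $B\in\mathcal{T}$ and using $\Omega_{n\ell}=\bigcup_{B\in\mathcal{T}}B$ yields the stated bound. The only real obstacle is the bookkeeping of constants along the tree: the estimates above depend on the ratios $|B_k|/|B_{k-i}|$, the lengths of branches, and the degree of $\mathcal{T}$, so one must be careful that these quantities are all controlled by the geometry of $\mathcal{T}$ (which they are, since $\mathcal{T}$ is finite and each $B\in\mathcal{T}$ has positive measure). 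Since we are only asserting the existence of $C$, no further sharpness is required.
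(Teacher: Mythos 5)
Your proposal is correct and is essentially a careful write-up of the argument the paper itself sketches: the paper proves the corollary with the single line ``Immediate from Lemma~\ref{lemma:branch}'', after an informal paragraph explaining exactly your idea — apply Lemma~\ref{lemma:branch} branch by branch, using that each new root belongs to an already-processed branch, and sum over the finitely many branches of $\mathcal{T}$. No substantive difference in approach; you have simply made the induction and the bookkeeping explicit.
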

\begin{proof} Immediate from Lemma \ref{lemma:branch}.
\end{proof}

\begin{remark}[Explicit Constants]
\label{rem:cuad1}
Notice that in some cases an explicit expression for $C(\mathcal{T},m_J)$ can be obtained.
If $\Omega_{n\ell}$ is a cube in $\RR^N$ (see Remark \ref{rem:cubo}) then we can take $\mathcal{T}$ with a single branch of length bounded by $\frac{8^N|\Omega_{n\ell}|}{\delta^N}$. In this case, using Lemma \ref{lemma:branch} and taking into account that $|B_i|=|B|=c_N\frac{\delta^N}{2^N}$ is constant (here $c_N$ the measure of the unit ball
in $\mathbb{R}^N$), together with the fact that $$\sum_{k=i}^N 2^k=2^i\sum_{k=0}^{N-i}2^k=2^i(2^{N-i+1}-1)\le 2^{N+1},$$ we can obtain the bound
$$\int_{\Omega_{n\ell}} u^2\, dx\le 2^{\frac{8^N|\Omega_{n\ell}|}{\delta^N}+1}\left( \int_{B} u^2\, dx +\frac{2^N}{c_N\delta^N m_J} \int_{\Omega_{n\ell}}\int_{\Omega_{n\ell}} J(x-y) (u(x)-u(y))^2 \, dx \, dy \right).$$
Notice that the obtained constant deteriorates as $\delta \to 0$ or $m_J \to 0$.
\end{remark}

Now, we are ready to present a proof of the  Poincar\'e-Wirtinger inequality where the constant 
can be tracked. 

\begin{lemma}\label{lema.Poincare.con.cte} 
The optimal 
constant $C^*>0$ such that 
$$
\begin{array}{l}
\displaystyle
\int_{\Omega_\ell} |\nabla u|^2 + \frac{1}{2 }\int_{\Omega_{n\ell}}\int_{\Omega_{n\ell}} J(x-y)(u(y)-u(x))^2\, dy\, dx
+ \frac{1}{2}\int_{\Omega_{n\ell}}\int_{\Omega_\ell} G(x,y)|u(y)-u(x)|^2\, dy\,  dx \\[10pt]
\qquad \qquad \qquad \displaystyle \geq \displaystyle C^*\int_{\Omega}u^2 (x) \, dx
\end{array}
$$
for every function $u\in H_{Neu}$ (that is, for $u \in \mathcal{H}$ with $\int_\Omega u =0$) can be estimated as
$$
C^* \geq c^* (J, G, \Omega_{\ell},\mathcal{T},\delta,A,B) 
$$
with $A,B$ two open sets
$A\subset \Omega_{\ell}$, $B\subset  \Omega_{n\ell}$ such that $diam(A\cup B)<2\delta$ and $\mathcal{T}$ a $\delta-tree$ structure of $ \Omega_{n\ell}$ with root $B$.
\end{lemma}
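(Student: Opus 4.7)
The plan is to run through the same contradiction-free chain that would emerge from inspecting the soft argument of Lemma \ref{lema.Poincare}, but replacing every compactness/weak-convergence step with a quantitative ingredient already available in the paper: the classical Poincaré--Wirtinger inequality on the Lipschitz domain $\Omega_\ell$, the tree-based nonlocal bound of Corollary \ref{coro:solonolocal}, and the explicit positivity of $G$ on $A\times B$ coming from $(G1)$ and $\mathrm{diam}(A\cup B)<2\delta$. To handle the fact that the local Poincaré inequality only controls oscillation around the mean, I will center everything at $m:=\dashint_{\Omega_\ell}u$ and set $v:=u-m$; the zero-average condition $\int_\Omega u=0$ will be used only at the very end, to convert control of $v$ into control of $u$.

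More precisely, the steps I plan to carry out, in order, are the following.
\begin{enumerate}
\item \emph{Reduction to $v=u-m$.} A direct expansion of $u^2=(v+m)^2$ together with $\int_{\Omega_\ell}v=0$ and $\int_\Omega u=0$ (which gives $\int_{\Omega_{n\ell}}v=-|\Omega|m$) yields $\int_\Omega u^2=\int_\Omega v^2-|\Omega|m^2\le \int_\Omega v^2$. So it suffices to bound $\int_{\Omega_\ell}v^2+\int_{\Omega_{n\ell}}v^2$ by the energy.
\item \emph{Local piece.} The classical Poincaré--Wirtinger inequality on the Lipschitz domain $\Omega_\ell$ (with explicit constant $C_P(\Omega_\ell)$ for smooth/Lipschitz domains) gives $\int_{\Omega_\ell}v^2\le C_P\int_{\Omega_\ell}|\nabla u|^2$.
\item \emph{Nonlocal piece on $\Omega_{n\ell}$.} Since the $J$-energy is invariant under additive constants, applying Corollary \ref{coro:solonolocal} to $v$ on the chosen $\delta$-tree $\mathcal{T}$ rooted at $B$ yields
\[
\int_{\Omega_{n\ell}}v^2\le C(\mathcal{T},m_J)\Bigl(\int_{B}v^2+\int_{\Omega_{n\ell}}\int_{\Omega_{n\ell}}J(x-y)(v(x)-v(y))^2\,dx\,dy\Bigr).
\]
\item \emph{Transfer through the coupling.} For $x\in B$ and $y\in A$, the inequality $v(x)^2\le 2(v(x)-v(y))^2+2v(y)^2$, averaged over $y\in A$ and integrated over $x\in B$, gives
\[
\int_{B}v^2\le \frac{2}{|A|\,m_G}\int_{\Omega_{n\ell}}\int_{\Omega_\ell}G(x,y)(v(y)-v(x))^2\,dy\,dx+\frac{2|B|}{|A|}\int_{\Omega_\ell}v^2,
\]
where $m_G:=\inf\{G(y,x):(y,x)\in\Omega_\ell\times\Omega_{n\ell},\;\|x-y\|\le 2\delta\}>0$ by $(G1)$; the hypothesis $\mathrm{diam}(A\cup B)<2\delta$ is exactly what guarantees $G\ge m_G$ on $A\times B$. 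The last term is already controlled by step (2).
\item \emph{Assembly.} Substituting (4) into (3) and adding (2), every term on the right is dominated (up to an explicit constant depending only on $C_P(\Omega_\ell)$, $C(\mathcal{T},m_J)$, $m_G$, $|A|$, $|B|$) by
\[
\int_{\Omega_\ell}|\nabla u|^2+\tfrac12\int_{\Omega_{n\ell}}\int_{\Omega_{n\ell}}J(x-y)(u(y)-u(x))^2\,dy\,dx+\tfrac12\int_{\Omega_{n\ell}}\int_{\Omega_\ell}G(x,y)|u(y)-u(x)|^2\,dy\,dx,
\]
which, combined with step (1), provides the desired explicit constant $c^*(J,G,\Omega_\ell,\mathcal{T},\delta,A,B)$.
\end{enumerate}

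The only delicate point is step (4), where one must make sure that the two functions of different variables inside the coupling integral really match the kernel's positivity region; the assumption $\mathrm{diam}(A\cup B)<2\delta$ together with $(G1)$ is tailor-made for this and closes the argument. Everything else is either a routine application of the Lipschitz Poincaré inequality, the already-proved tree bound (Corollary \ref{coro:solonolocal}, whose constant was made fully explicit in Remark \ref{rem:cuad1} in special geometries), or the algebraic identity coming from the zero-mean constraint in step (1). Tracking the constants through (1)--(5) produces the explicit expression for $c^*$ asserted in the statement, and in particular makes transparent how $c^*$ degenerates as $\delta\to 0$, $m_J\to 0$ or $m_G\to 0$.
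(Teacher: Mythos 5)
Your plan is correct and follows essentially the same route as the paper's proof. Both of you control the centered function $v$ by (i) a local Poincar\'e inequality on $\Omega_\ell$, (ii) the tree bound of Corollary \ref{coro:solonolocal} on $\Omega_{n\ell}$, and (iii) the strict positivity $G\ge m_G$ on $A\times B$ to transfer $L^2$ control from the local side to the root $B$ of the tree. The only real difference is the choice of centering: the paper picks $r$ with $\int_A(u-r)=0$, so that the Jensen step $\int_B v^2\le\frac{1}{|A|}\int_B\int_A|v(x)-v(y)|^2$ produces no extra local remainder, at the price of replacing the ordinary Poincar\'e constant by $\sigma(\Omega_\ell,A)$ (which Remark \ref{rem:poincare.casos} then relates to $\sigma(\Omega_\ell)$); you instead center at the mean over all of $\Omega_\ell$, so you keep the standard Poincar\'e--Wirtinger constant $C_P(\Omega_\ell)$ but pay an extra term $\tfrac{2|B|}{|A|}\int_{\Omega_\ell}v^2$, which is then reabsorbed into the local piece. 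Your preliminary step (1), checking that $\int_\Omega u^2\le\int_\Omega v^2$ when $\int_\Omega u=0$, is the variance identity the paper encodes implicitly via $\inf_r\int_\Omega|u-r|^2=\int_\Omega u^2$; both are fine. The resulting constant depends on the same quantities ($C_P$ or $\sigma(\Omega_\ell,A)$, $C(\mathcal T,m_J)$, $m_G$, $|A|$, $|B|$), so the two arguments are effectively the same up to this bookkeeping choice.
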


\begin{proof}
We aim to obtain a bound of the form 
$$
\begin{array}{l}
\displaystyle
\inf_{r\in \mathbb{R}} 
\int_{\Omega}|u-r|^2 (x) \, dx
\leq \frac{1}{c^*} \left[ \int_{\Omega_\ell} |\nabla u|^2 + \frac{1}{2 }\int_{\Omega_{n\ell}}\int_{\Omega_{n\ell}} J(x-y)(u(y)-u(x))^2\, dy\, dx \right. \\[10pt]
\qquad \qquad\qquad \qquad\qquad \qquad \qquad \qquad \displaystyle \left.
+ \frac{1}{2}\int_{\Omega_{n\ell}}\int_{\Omega_\ell} G(x,y)|u(y)-u(x)|^2\, dy\,  dx \right]
\end{array}
$$
for functions $u \in L^2 (\Omega)$ such that $u|_{\Omega_\ell} \in H^1 (\Omega_\ell)$. 

From our hypothesis on $G$ and the assumption $\mbox{dist}
(\Omega_\ell, \Omega_{n\ell}) <\delta$, there exist two sets 
$A\subset  \Omega_{\ell}$ and $B \subset  \Omega_{n\ell}$ such that 
$$
|x-y| \leq 2\delta, \qquad \forall x\in A, \, y\in B
$$
and therefore
$$
G(x,y) \geq m_G = \min_{|x-y|<2\delta } G(x,y), \qquad \forall x\in A, y \in B.
$$
Moreover, reducing the size of $B$ if necessary, we may assume that $diam(B)<\delta/2$ and we define $\mathcal{T}$ a $\delta-tree$  on $\Omega_{n\ell}$ with root $B$.

Let us take $r \in \mathbb{R}$ such that 
$$
\int_A (u(x) -r) \, dx =0. 
$$
Then, we have, for $v= u-r$ ­
\begin{equation}
\label{eq:intersec}
\begin{array}{l}
\displaystyle 
\frac{1}{2}\int_{\Omega_{n\ell}}\int_{\Omega_\ell} G(x,y)|v(y)-v(x)|^2\, dy\,  dx \\[10pt]
\displaystyle \geq \frac{1}{2}\int_{B}\int_{A} G(x,y)|v(y)-v(x)|^2\, dy\,  dx \\[10pt]
\displaystyle \geq m_G \frac{|A|}{2}\int_{B}|v(x)|^2\,  dx 
\end{array}
\end{equation}
where we are using that 
$$
\int_{B}|v(x)|^2\,  dx=\frac{1}{|A|^2}\int_{B}\left|\int_A(v(x)-v(y))\,dy\right|^2\,  dx \le
\frac{1}{|A|}\int_{B}\int_A|v(x)-v(y)|^2\,dy\,  dx.
$$

Now, since
$$
0<\sigma (\Omega_\ell, A) = \inf_{v \in H^1 (\Omega_\ell), \int_Av\, dx=0} 
\frac{\displaystyle \int_{\Omega_\ell} |\nabla v|^2 (x) \, dx }{\displaystyle  \int_{\Omega_\ell} v^2 (x) \, dx } ,
$$
for a suitable Poincar\'e constant $\sigma (\Omega_\ell, A) $, we have for the local region,
$$ \int_{\Omega_\ell} v^2 (x) \, dx \leq \frac{1}{\sigma (\Omega_\ell, A)} 
\int_{\Omega_\ell} |\nabla v|^2 (x) \, dx. 
$$

On the other hand, Corollary  \ref{coro:solonolocal} says that
$$
\int_{\Omega_{n\ell}} v^2\, dx\le C(\mathcal{T},m_J)\left( \int_{B} v^2\, dx + \int_{\Omega_{n\ell}}\int_{\Omega_{n\ell}} J(x-y) |v(x)-v(y)|^2 \, dx \, dy \right),
$$ 
and the lemma follows by collecting the last two inequalities together with \eqref{eq:intersec}. 
\end{proof}

\begin{remark}
\label{rem:poincare.casos}
The constant $\sigma (\Omega_\ell, A)$ can be difficult to characterize  even in the case  $A=\Omega_\ell$  (for which we just write $\sigma(\Omega_\ell)$). For some elementary domains, such as cubes or balls in $\RR^N$, explicit computations of $\sigma(\Omega_\ell)$ can be done. Moreover, in particular circumstances and for simple geometries some bounds are easy to find. Notably, for convex domains 
it is known that $$\frac{\pi^2}{\mbox{diam}^2(\Omega)}\le \sigma(\Omega_\ell),$$ 
see \cite{PW}. On the other hand, a simple argument relates $\sigma (\Omega_\ell, A)$ with $\sigma(\Omega_\ell)$ as follows: assume that $u\in H^1(\Omega_\ell)$,  $\frac{1}{|A|}\int_{A}u(x)\, dx=0$ and write 
$$
\bar u_{\Omega_\ell}=\frac{1}{|\Omega_\ell|}\int_{\Omega_\ell}u(x)\, dx=\int_{\Omega_\ell}u(x)\left(
\frac{1}{|\Omega_\ell|}-\frac{\chi_A(x)}{|A|}\right)\, dx=\int_{\Omega_\ell}(u(x)-\bar u_{\Omega_\ell})\left(
\frac{1}{|\Omega_\ell|}-\frac{\chi_A(x)}{|A|}\right)\, dx 
$$
then, Schwartz's inequality yields
$$
\bar u_{\Omega_\ell}^2\le \frac{|\Omega_\ell|}{|A|^2}\int_{\Omega_\ell}(u(x)-\bar u_{\Omega_\ell})^2\, dx \le \frac{|\Omega_\ell|}{|A|^2\sigma(\Omega_\ell)}\int_{\Omega_\ell}|\nabla u|^2(x)\, dx.
$$
Hence, we have
$$
\int_{\Omega_\ell}u(x)^2\, dx \le 2\int_{\Omega_\ell}(u(x)-\bar u_{\Omega_\ell})^2\, dx  
+2 \int_{\Omega_\ell}\bar u_{\Omega_\ell}^2\, dx \le 2\frac{1+ \left(\frac{|\Omega_\ell|}{|A|}\right)^2}{\sigma(\Omega_\ell)}\int_{\Omega_\ell}|\nabla u|^2(x)\, dx,
$$
and, as a consequence, we conclude that
$$
\frac{\sigma (\Omega_\ell)}{2\left(1+ \left( \displaystyle \frac{|\Omega_\ell|}{|A|}\right)^2 \right)}\le \sigma (\Omega_\ell, A).
$$
\end{remark}

\begin{remark}
\label{rem:cuad2}
For two adjacent square domains $\Omega_{n\ell} =[-1,0] \times [0,1]$, $\Omega_{\ell}
=[0,1] \times [0,1]$ of unitary side, Remarks \ref{rem:cuad1} and \ref{rem:poincare.casos} give
an explicit estimate of the constant. 

First, we have
$$ \int_{\Omega_\ell} v^2 (x) \, dx \leq \frac{1}{\sigma (\Omega_\ell, A)} 
\int_{\Omega_\ell} |\nabla v|^2 (x) \, dx \leq  \frac{ [\mbox{diam} (\Omega) ]^2}{2\pi^2 
\left(1+ \left( \displaystyle \frac{|\Omega_\ell|}{|A|}\right)^2 \right)} 
\int_{\Omega_\ell} |\nabla v|^2 (x) \, dx. 
$$
On the other hand, from Remark \ref{rem:cuad1} we get
$$\int_{\Omega_{n\ell}} u^2\, dx\le 2^{\frac{8^N|\Omega_{n\ell}|}{\delta^N}+1}\left( \int_{B} u^2\, dx +\frac{2^N}{c_N\delta^N m_J} \int_{\Omega_{n\ell}}\int_{\Omega_{n\ell}} J(x-y) (u(x)-u(y))^2 \, dx \, dy \right).$$

Therefore, for the particular case of two adjacent unit squares we have that
the best constant in the Poincare type inequality
$$
\begin{array}{l}
\displaystyle
\int_{\Omega_\ell} |\nabla u|^2 + \frac{1}{2 }\int_{\Omega_{n\ell}}\int_{\Omega_{n\ell}} J(x-y)(u(y)-u(x))^2\, dy\, dx
+ \frac{1}{2}\int_{\Omega_{n\ell}}\int_{\Omega_\ell} G(x,y)|u(y)-u(x)|^2\, dy\,  dx \\[10pt]
\qquad \qquad \qquad \displaystyle \geq \displaystyle C^* \int_{\Omega}u^2 (x) \, dx, \qquad \forall u \in H_{Neu},
\end{array}
$$
can be estimated as
$$
C^* \geq  c\, \delta^N \,  e^{c \,  \delta^{-N}} \min \Big\{ m_J  ; m_G \Big\}.
$$
for $\delta$ small enough
(here $c$ is a constant independent of $\delta$).
\end{remark}

Now we are ready to proceed with the proof of the existence and uniqueness
of minimizers of $E_{Neu}(u)$ in $H_{Neu}$.

\begin{theorem} \label{teo.1}
Given $f \in L^2 (\Omega)$ with $\int_\Omega f =0$ there exists a unique minimizer of $E_{Neu}(u)$ in $H_{Neu}$.
\end{theorem}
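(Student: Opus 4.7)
The plan is to apply the direct method of the calculus of variations, with the Poincar\'e-Wirtinger inequality of Lemma \ref{lema.Poincare} as the central analytic input. For coercivity, Young's inequality on the source term gives $|\int_\Omega f\,u|\le \frac{C^*}{2}\|u\|_{L^2(\Omega)}^2+\frac{1}{2C^*}\|f\|_{L^2(\Omega)}^2$, where $C^*$ is the constant of Lemma \ref{lema.Poincare}, so $E_{Neu}(u)\ge -\frac{1}{2C^*}\|f\|_{L^2(\Omega)}^2$ and any minimizing sequence $\{u_n\}$ satisfies $\|u_n\|_{L^2(\Omega)}\le C$. Dropping the nonnegative nonlocal terms from $E_{Neu}(u_n)$ bounds $\int_{\Omega_\ell}|\nabla u_n|^2$ in terms of $E_{Neu}(u_n)+\|f\|_{L^2}\|u_n\|_{L^2}$, so $\{u_n|_{\Omega_\ell}\}$ is also bounded in $H^1(\Omega_\ell)$.

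Passing to a subsequence gives $u_n\rightharpoonup u$ weakly in $H^1(\Omega_\ell)$---hence strongly in $L^2(\Omega_\ell)$ by Rellich-Kondrachov---and $u_n\rightharpoonup u$ weakly in $L^2(\Omega_{n\ell})$. The linear constraint $\int_\Omega u_n=0$ passes to the limit, so $u\in H_{Neu}$. Each of the three quadratic pieces of $E_{Neu}$ is convex in $u$ and strongly lower semicontinuous on the relevant $L^2$ space (Fatou on an a.e.\ convergent subsequence), hence weakly lower semicontinuous; the source term is linear and weakly continuous. Therefore $E_{Neu}(u)\le\liminf_n E_{Neu}(u_n)$, and $u$ is a minimizer.

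For uniqueness, if $u_1,u_2\in H_{Neu}$ both minimize, then $(u_1+u_2)/2\in H_{Neu}$ is also admissible, and the parallelogram identity applied term by term (the linear source cancels) yields
\[
\tfrac{1}{2} E_{Neu}(u_1)+\tfrac{1}{2} E_{Neu}(u_2)-E_{Neu}\!\left(\tfrac{u_1+u_2}{2}\right)=\tfrac{1}{4}\,Q(u_1-u_2),
\]
where $Q(v)$ denotes the three-term quadratic form on the left-hand side of Lemma \ref{lema.Poincare}. The left-hand side vanishes (all three arguments are minimizers), so $Q(u_1-u_2)=0$; since $u_1-u_2$ has zero mean, Lemma \ref{lema.Poincare} forces $u_1=u_2$.

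The real work is already done in Lemma \ref{lema.Poincare}. A subtler technical point worth flagging is the weak lower semicontinuity of the coupling integral under only weak $L^2(\Omega_{n\ell})$ convergence; this is handled by the convexity-plus-Fatou argument sketched above, or more directly by noting that $x\mapsto\int_{\Omega_\ell}G(x,y)u_n(y)\,dy$ converges strongly in $L^2(\Omega_{n\ell})$ thanks to the compact embedding $H^1(\Omega_\ell)\hookrightarrow L^2(\Omega_\ell)$, and pairs against the weakly convergent $u_n|_{\Omega_{n\ell}}$ in the limit.
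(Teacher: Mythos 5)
Your proof is correct and follows essentially the same route as the paper: the direct method of calculus of variations, with the Poincar\'e--Wirtinger inequality of Lemma \ref{lema.Poincare} as the sole analytic ingredient for coercivity, and convexity of the quadratic form for uniqueness. Your parallelogram-identity step merely spells out what the paper's terse appeal to ``strict convexity'' means once the mean-zero constraint and Lemma \ref{lema.Poincare} are in hand; the only phrasing nit is that you cannot assume $(u_1+u_2)/2$ is a minimizer a priori---rather, the identity shows the left side is $\le 0$ while the right side is $\ge 0$, so both vanish and the conclusion follows.
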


\begin{proof}[Proof of Theorem \ref{teo.1}] From the previous Poincar\'e-Wirtinger type inequality we obtain 
$$
\begin{array}{l}
\displaystyle 
E_{Neu} (u) = \int_{\Omega_\ell} \frac{|\nabla u(x)|^2}{2} \, dx 
+ \frac{1}{2}\int_{\Omega_{n\ell}}\int_{\Omega} J(x-y)|u(y)-u(x)|^2\, dy \,  dx
\\[10pt]
\qquad \qquad \qquad  \displaystyle  +
\frac{1}{2}\int_{\Omega_{n\ell}}\int_{\Omega_\ell} G(x,y)|u(y)-u(x)|^2\, dy\,  dx
- \int_\Omega f (x) u (x) \,dx \\[10pt]
\qquad \qquad \displaystyle  \geq c \int_{\Omega} u^2 (x) \,dx - \int_\Omega f(x)u(x) \,dx
\end{array}
$$
from where it follows that $E_{Neu} (u)$ is bounded below and coercive in $H_{Neu}$. Hence, existence of a minimizer follows
by the direct method of calculus of variations. Just take a minimizing sequence $u_n$ and extract a subsequence
that converges weakly in $L^2 (\Omega)$. Then, we have 
$$
0=\lim_{n\to \infty} \int_\Omega u_n (x) \,dx = \int_\Omega u(x) \,dx,
$$ 
$$
\lim_{n\to \infty} \int_\Omega f  (x)u_n (x) \,dx = \int_\Omega f(x)u(x) \,dx
$$ 
and 
$$
\begin{array}{l}
\displaystyle 
\lim_{n\to \infty}\int_{\Omega_\ell} \frac{|\nabla u(x)|^2}{2} \, dx 
+ \frac{1}{2}\int_{\Omega_{n\ell}}\int_{\Omega} J(x-y)|u(y)-u(x)|^2\, dy \,  dx
\\[10pt]
\qquad \qquad \qquad  \displaystyle  +
\frac{1}{2}\int_{\Omega_{n\ell}}\int_{\Omega_\ell} G(x,y)|u(y)-u(x)|^2\, dy\,  dx
\\[10pt]
\qquad \displaystyle  \geq \int_{\Omega_\ell} |\nabla u(x)|^2 \,dx 
+ \frac{\alpha}{2 }\int_{\Omega_{n\ell}}\int_{\Omega_{n\ell}} J(x-y)(u(y)-u(x))^2\, dy\, dx
\\[10pt]
\qquad \qquad \qquad  \displaystyle  +
\frac{1}{2}\int_{\Omega_{n\ell}}\int_{\Omega_\ell} G(x,y)|u(y)-u(x)|^2\, dy\,  dx.
\end{array}
$$
Then, we obtain that $u\in H_{Neu}$ and that $u$ is a minimizer,
$$E_{Neu}(u) = \min_{v\in H} E_{Neu}(v).$$ 

Uniqueness of minimizers follows from the strict convexity of the functional $E_{Neu} (u)$. 
\end{proof}

Associated with this energy we have an equation in $\Omega$.

\begin{lemma} \label{ec-dif.1}
The minimizer of $E_{Neu}(u)$ in $H_{Neu}$ is a weak solution to the following problem: a local
equation with a source in $\Omega_{\ell}$,
\begin{equation}
\label{eq:main.Neumann.local}
\begin{cases}
\displaystyle - f(x)=\Delta u (x) + \int_{\Omega_{n\ell}} G(x,y)(u(y)-u(x))\, dy,\ &x\in \Omega_\ell,\ t>0,
\\[10pt]
\displaystyle \frac{\partial u}{\partial \eta} (x)=0,\qquad & x\in \partial \Omega_\ell, ,
\end{cases}
\end{equation}
and a nonlocal equation with a source in $\Omega_{n\ell}$,
\begin{equation}
\label{eq:main.Neumann.nonlocal}
\displaystyle - f(x) =  2 \int_{\Omega_{n\ell}}\!\! J(x-y)(u(y)-u(x))\, dy
+ \int_{\Omega_\ell} G(x,y)(u(y)-u(x))\, dy,
\quad x \in \Omega_{n\ell}.
\end{equation}
\end{lemma}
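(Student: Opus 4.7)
The plan is to identify \eqref{eq:main.Neumann.local} and \eqref{eq:main.Neumann.nonlocal} as the Euler--Lagrange equations for the minimization problem already solved in Theorem \ref{teo.1}. Let $u$ be the minimizer. For any admissible perturbation $v\in L^2(\Omega)$ with $v|_{\Omega_\ell}\in H^1(\Omega_\ell)$ and $\int_\Omega v\,dx=0$, the curve $u+\epsilon v$ remains in $H_{Neu}$ and so $\frac{d}{d\epsilon}E_{Neu}(u+\epsilon v)\big|_{\epsilon=0}=0$, which yields the identity
$$
\int_{\Omega_\ell}\!\!\nabla u\cdot\nabla v\,dx
+\!\int_{\Omega_{n\ell}}\!\!\int_{\Omega_{n\ell}}\!\!J(x-y)(u(y)\!-\!u(x))(v(y)\!-\!v(x))\,dy\,dx
+\!\int_{\Omega_{n\ell}}\!\!\int_{\Omega_\ell}\!\!G(x,y)(u(y)\!-\!u(x))(v(y)\!-\!v(x))\,dy\,dx
=\!\int_\Omega\! fv\,dx.
$$
Since every term on the left is invariant under $v\mapsto v+c$ and the right-hand side is too (because $\int_\Omega f=0$), the identity extends to all $v\in L^2(\Omega)$ with $v|_{\Omega_\ell}\in H^1(\Omega_\ell)$, with no mean-zero restriction.

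The next step is to untangle the two double integrals. Using $J(x-y)=J(y-x)$ and the change of variables $x\leftrightarrow y$, a standard symmetrization gives
$$
\int_{\Omega_{n\ell}}\!\!\int_{\Omega_{n\ell}}\!\!J(x-y)(u(y)-u(x))(v(y)-v(x))\,dy\,dx
=-2\int_{\Omega_{n\ell}}\!v(x)\!\int_{\Omega_{n\ell}}\!\!J(x-y)(u(y)-u(x))\,dy\,dx.
$$
The coupling term has no such symmetry, so I simply distribute $v(y)-v(x)$ and use Fubini on each piece to write
$$
\int_{\Omega_{n\ell}}\!\!\int_{\Omega_\ell}\!G(x,y)(u(y)-u(x))(v(y)-v(x))\,dy\,dx
$$
$$
=\int_{\Omega_\ell}\!v(y)\!\!\int_{\Omega_{n\ell}}\!\!G(x,y)(u(y)-u(x))\,dx\,dy
+\int_{\Omega_{n\ell}}\!v(x)\!\!\int_{\Omega_\ell}\!G(x,y)(u(x)-u(y))\,dy\,dx.
$$

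With the weak identity in hand, I test first with $v\in L^2(\Omega)$ supported in $\Omega_{n\ell}$ and arbitrary; isolating $v(x)$ in the resulting integrand and applying the fundamental lemma of the calculus of variations recovers \eqref{eq:main.Neumann.nonlocal} a.e.\ in $\Omega_{n\ell}$. Next I test with $v\in C_c^\infty(\Omega_\ell)$: only the local integral and the $\Omega_\ell$-piece of the coupling survive, and integration by parts on $\int\nabla u\cdot\nabla v$ gives \eqref{eq:main.Neumann.local} in the distributional sense inside $\Omega_\ell$ (under elliptic regularity $u|_{\Omega_\ell}\in H^2_{\rm loc}$, this is pointwise a.e.). Finally, with $v$ an arbitrary $H^1(\Omega_\ell)$ function extended by zero to $\Omega_{n\ell}$, the interior equation cancels the volume contributions and only the boundary term $\int_{\partial\Omega_\ell}v\,\partial_\eta u\,d\sigma$ is left, which must vanish for every trace; this is the homogeneous Neumann condition in the natural variational sense.

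The only non-routine piece is the bookkeeping for the coupling term: unlike $J$, the kernel $G$ is not symmetric and its two arguments live in different domains, so one must carefully split the double integral to see that the $\Omega_\ell$-contribution furnishes the integral source term in the PDE in $\Omega_\ell$ while the $\Omega_{n\ell}$-contribution furnishes the source term in the nonlocal equation in $\Omega_{n\ell}$, with exactly the signs demanded by \eqref{eq:main.Neumann.local}--\eqref{eq:main.Neumann.nonlocal}.
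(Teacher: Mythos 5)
Your proof is correct and follows essentially the same route as the paper: compute the first variation $\frac{d}{d\epsilon}E_{Neu}(u+\epsilon v)\big|_{\epsilon=0}=0$, symmetrize the $J$ term using $J(z)=J(-z)$ to pull out the factor $-2$, and split the (non-symmetric) $G$ double integral via Fubini into its $\Omega_\ell$- and $\Omega_{n\ell}$-supported pieces. You add a bit more care than the paper by noting explicitly why the mean-zero restriction on test functions can be dropped (since $\int_\Omega f=0$ the identity is invariant under $v\mapsto v+c$) and by enumerating the test-function choices that yield the interior PDE, the nonlocal equation, and the Neumann condition, but this is a fleshing-out of the same argument rather than a different one.
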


\begin{proof} Let $u$ be the minimizer of $E_{Neu}(u)$ in $H_{Neu}$, then for every smooth $\varphi$
with $\int_{\Omega} \varphi =0$ and every $t\in \mathbb{R}$ we have
$$
E_{Neu}(u+t \varphi) - E_{Neu}(u) \geq 0.
$$
Therefore, we have that 
$$
\frac{\partial }{\partial t} E_{Neu}(u+t\varphi) |_{t=0} =0,
$$
that is,
$$
\begin{array}{l}
\displaystyle \int_\Omega f \varphi = \int_{\Omega_\ell} \nabla u \nabla \varphi +
 \int_{\Omega_{n\ell}}\int_{\Omega_{n\ell}} J(x-y)(u(y)-u(x)) (\varphi (y) - \varphi(x)) \, dy \, dx
\\[10pt]
\displaystyle \qquad \qquad  
+ \int_{\Omega_{n\ell}}\int_{\Omega_\ell} G(x,y)(u(y)-u(x))(\varphi(y) - \varphi(x)) \, dy\,  dx.
\end{array}
$$
Using that the kernel $J$ is symmetric and Fubini's Theorem we obtain
$$
\begin{array}{l}
\displaystyle \int_\Omega f \varphi = \int_{\Omega_\ell} \nabla u \nabla \varphi - 2
 \int_{\Omega_{n\ell}}\int_{\Omega_{n\ell}} J(x-y)(u(y)-u(x))  \, dy \, \varphi (x) \, dx \\[10pt]
\displaystyle \qquad \qquad  + \int_{\Omega_{n\ell}}\int_{\Omega_{\ell}} G(x,y)(u(y)-u(x))  \, dx \, \varphi (y)\, dy
\\[10pt]
\displaystyle \qquad \qquad  - \int_{\Omega_{n\ell}}\int_{\Omega_{\ell}} G(x,y)(u(y)-u(x))  \, dy \, \varphi (x)\, dx.
\end{array}
$$
From where if follows that $u$ is a weak solution to \eqref{eq:main.Neumann.local} and 
\eqref{eq:main.Neumann.nonlocal}.
\end{proof}

{\bf Connections with probability theory.} 
A probabilistic interpretation of this model in terms of particle systems runs as follows:
take an exponential clock that controls the jumps of the particles, 
in the local region $\Omega_{\ell}$ the particles move according to Brownian motion (with a reflexion
on the boundary) and when the clock rings a new position is sorted according to the kernel 
$G(x,\cdot)$, then they jump if the new position is in the nonlocal region $\Omega_{n\ell}$ (if the
sorted position lies in $\Omega_\ell$ then just continue moving by Brownian motion ignoring the
ring of the clock) while in the nonlocal region $\Omega_{n\ell}$ the particles stay still until the clock rings
and then they jump using the kernel $J(x-\cdot)$ or the kernel 
$G(x,\cdot)$ to select the new position in the whole $\Omega$.
Notice that the total number of particles inside the domain $\Omega$ remains constant in time. 

The minimizer to our functional $E_{Neu}(u)$ gives the stationary distribution of particles provided
that there is an external source $f$ (that adds particles where $f>0$ and remove particles where $f<0$).

\section{Mixed coupling} \label{sect-surface}

{\bf Second model. Coupling local/nonlocal problems via flux terms.}
Our aim now is to look for a scalar problem with an energy that combines local and nonlocal terms 
acting in different subdomains, $\Omega_\ell$ and $\Omega_{n\ell}$, of $\Omega$, but now the coupling
is made balancing the fluxes across a prescribed hypersurface, $\Gamma$.

Recall from the Introduction that we look for a minimizer of the energy
\begin{equation} \label{energy.909.888}
\begin{array}{l}
    F_{Neu} (u)  \displaystyle:=\frac{1}{2}\int_{\Omega_{l}} | \nabla u (x)|^2 \, dx + 
    \frac{1}{2}\int_{\Omega_{nl}}\int_{\Omega_{nl}}J(x-y)\left(u(y)-u(x)\right)^2 \, dy\, dx \\[10pt]
    \displaystyle \quad \qquad \qquad\qquad \qquad + \frac{1}{2}\int_{\Omega_{nl}}\int_{\Gamma}G(x,z)\left(u(x)-u(z)\right)^2 \, d\sigma(z) \, dx
    - \int_\Omega f(x) u(x) \, dx
\end{array}
\end{equation}
 in
$$
H_{Neu} = \left\{ u \in L^2 (\Omega) \ : \ u \in H^1 (\Omega_\ell) , \ \int_\Omega u(x) \, dx =0 \right\},
$$
assuming that 
$$
\int_\Omega f (x) \, dx =0.
$$

As before we first prove a lemma that gives coercivity for  our functional.
     	
	\begin{lemma} \label{lema.1.22}
		There exists a constant $C$ such that
		$$
		\begin{array}{l}
		\displaystyle \int_{\Omega_\ell} \frac{|\nabla u (x)|^2}{2} \,dx 
		+ \frac{1}{2}\int_{\Omega_{n\ell}}\int_{\Omega_{n\ell}} J(x-y)(u(y)-u(x))^2\, dy\, dx\\[10pt]
		\displaystyle \qquad \qquad \qquad \qquad \qquad 
		+ \frac{1}{2}\int_{\Omega_{nl}}\int_{\Gamma}G(x,z)\left(u(x)-u(z)\right)^2 d\sigma(z) dx
		 \geq C \int_\Omega |u (x)|^2 \,dx,
		\end{array}
		$$
		for every $u$ in $H_{Neu}$.
	\end{lemma}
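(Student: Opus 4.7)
The plan is to argue by contradiction, mirroring the structure of Lemma \ref{lema.Poincare} and adapting Lemma \ref{lema:contagio_escalar} to the surface-coupling setting. Suppose the inequality fails: then there is a sequence $\{u_n\}\subset H_{Neu}$ with $\int_\Omega u_n=0$, $\int_\Omega u_n^2=1$, and each of the three energy terms on the left tending to zero. From $\int_{\Omega_\ell}|\nabla u_n|^2\to 0$ and the boundedness of $u_n$ in $L^2(\Omega_\ell)$, extracting a subsequence I obtain $u_n\to k_1$ strongly in $H^1(\Omega_\ell)$ for a constant $k_1$. Since $\Omega_\ell$ is Lipschitz and $\Gamma\subset\overline{\Omega_\ell}$ is a smooth hypersurface, the trace theorem yields $u_n|_\Gamma\to k_1$ strongly in $L^2(\Gamma)$. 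On $\Omega_{n\ell}$, $L^2$-boundedness gives a weak limit $v$; weak lower semicontinuity of the convex functional $w\mapsto \iint J(x-y)|w(y)-w(x)|^2\,dy\,dx$ together with the vanishing of the $J$-energy forces this quantity to be zero for $v$, so Lemma \ref{lema:J0implica_cte} and the $\delta$-connectedness of $\Omega_{n\ell}$ give $v\equiv k_2$ for some constant $k_2$.

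Next I argue $k_1=k_2=0$. The coupling functional $(u,\tau)\mapsto \int_{\Omega_{n\ell}}\int_\Gamma G(x,z)|u(x)-\tau(z)|^2\,d\sigma(z)\,dx$ is jointly convex and nonnegative on $L^2(\Omega_{n\ell})\times L^2(\Gamma)$, hence weakly lower semicontinuous; applied to the weak-strong pair $(u_n,u_n|_\Gamma)\to(k_2,k_1)$ it yields
\[
\int_{\Omega_{n\ell}}\int_\Gamma G(x,z)|k_2-k_1|^2\,d\sigma(z)\,dx \le \liminf_n \int_{\Omega_{n\ell}}\int_\Gamma G(x,z)|u_n(x)-u_n(z)|^2\,d\sigma(z)\,dx = 0.
\]
By $(G2)$ and $(P2)$ the set of pairs $(x,z)\in\Omega_{n\ell}\times\Gamma$ with $|x-z|\le 2\delta$ has positive product measure, and $G\ge C>0$ there, so $k_1=k_2$. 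Passing to the limit in $\int_\Omega u_n=0$ (strongly on $\Omega_\ell$, weakly on $\Omega_{n\ell}$ tested against $\chi_{\Omega_{n\ell}}$) yields $k_1|\Omega_\ell|+k_2|\Omega_{n\ell}|=0$, hence $k_1=k_2=0$.

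The main obstacle is upgrading the weak convergence $u_n\rightharpoonup 0$ on $\Omega_{n\ell}$ to strong convergence, so as to contradict $\|u_n\|_{L^2(\Omega)}=1$. Lemma \ref{lema:contagio_escalar} cannot be invoked directly since the volumetric coupling with $\Omega_\ell$ is replaced by a surface coupling; I plan to replicate its proof, initiating the strong-convergence "wavefront" from a neighborhood of $\Gamma$ inside $\Omega_{n\ell}$. Expanding $|u_n(x)-u_n(z)|^2$ and using that $u_n|_\Gamma\to 0$ strongly in $L^2(\Gamma)$ together with the boundedness of $u_n$ in $L^2(\Omega_{n\ell})$, the cross term and the pure-$\Gamma$ term both vanish in the limit, leaving
\[
\lim_n \int_{\Omega_{n\ell}} \left(\int_\Gamma G(x,z)\,d\sigma(z)\right) u_n(x)^2\,dx = 0.
\]
Setting $A^0_\delta:=\{x\in\Omega_{n\ell}:\mathrm{dist}(x,\Gamma)<\delta\}$, which is nonempty by $(P2)$, the weight $x\mapsto\int_\Gamma G(x,z)\,d\sigma(z)$ is bounded below by a positive constant on $\overline{A^0_\delta}$: for such $x$, $G(x,z)\ge C$ on $B_{2\delta}(x)\cap\Gamma$ by $(G2)$, and the surface measure of this intersection is uniformly positive on the compact set $\overline{A^0_\delta}$ by smoothness of $\Gamma$. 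Hence $u_n\to 0$ strongly in $L^2(A^0_\delta)$. From here the iterative argument in the proof of Lemma \ref{lema:contagio_escalar} applies verbatim: using the vanishing of the $J$-energy and the $\delta$-connectedness of $\Omega_{n\ell}$, strong convergence propagates through a finite chain $A^0_\delta,A^1_\delta,\ldots$ covering $\Omega_{n\ell}$, producing $u_n\to 0$ in $L^2(\Omega)$ and the desired contradiction.
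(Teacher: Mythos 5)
Your proof is correct and reaches the same conclusion, but takes a genuinely different route in the final step. You follow the paper up through the deduction that $k_1=k_2=0$, that $u_n\to 0$ strongly in $L^2(\Omega_\ell)$ and in $L^2(\Gamma)$, and that $u_n\rightharpoonup 0$ in $L^2(\Omega_{n\ell})$. From there, the paper finishes directly: it expands both the $J$-energy and the $G$-energy, uses $(J2)$ (compactness of $T_J$) and the strong convergence on $\Gamma$ to discard the cross terms, arrives at
$\lim_n \int_{\Omega_{n\ell}} u_n^2(x)\bigl(\int_{\Omega_{n\ell}}J(x-y)\,dy + \int_\Gamma G(x,z)\,d\sigma(z)\bigr)\,dx = 0$,
and then concludes from the pointwise lower bound $\int_{\Omega_{n\ell}}J(x-y)\,dy + \int_\Gamma G(x,z)\,d\sigma(z)\geq c>0$ on $\overline{\Omega_{n\ell}}$ (which the paper asserts without detail; it follows already from $(J1)$ and compactness of $\overline{\Omega_{n\ell}}$). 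You instead adapt the iterative wavefront argument of Lemma \ref{lema:contagio_escalar}: you correctly observe that the initial seeding step there must be replaced, since the second model has no volumetric $G$-coupling, and you re-seed from $A^0_\delta$, the $\delta$-neighborhood of $\Gamma$ inside $\Omega_{n\ell}$, using $(G2)$, $(P2)$ and the smoothness of $\Gamma$ to get the uniform lower bound on $\int_\Gamma G(x,z)\,d\sigma(z)$ there; after that the propagation through $\delta$-connected $\Omega_{n\ell}$ is unchanged because it only uses the $\Omega_{n\ell}\times\Omega_{n\ell}$ piece of the $J$-energy. Both arguments are sound; the paper's is shorter and avoids the tree/chain machinery, while yours is uniform with the treatment of the first model, makes the propagation of degeneracy through the nonlocal region more explicit, and fills in the justification of the lower bound near $\Gamma$ that the paper leaves implicit.
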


	\begin{proof} We argue by contradiction, then we assume that there is a sequence $u_n\in H_{Neu}$ such that
		$$
		\int_\Omega |u_n (x)|^2 \, dx =1
		$$
		and 
		$$
		\begin{array}{l}
\displaystyle \int_{\Omega_\ell} \frac{|\nabla u_n (x)|^2}{2} \, dx 
+ \frac{1}{2}\int_{\Omega_{n\ell}}\int_{\Omega_{n\ell}} J(x-y)(u_n(y)-u_n(x))^2\, dy \, dx \\[10pt]
\qquad \displaystyle 
+ \frac{1}{2}\int_{\Omega_{nl}}\int_{\Gamma}G(x,z)\left(u_n(x)-u_n(z)\right)^2 \, d\sigma(z)\, dx
		\to 0.
		\end{array}
		$$
		Then we have that
		$$
		\int_{\Omega_\ell} |\nabla u_n(x)|^2 \, dx \to 0, 
		$$
		$$
	     \int_{\Omega_{nl}}\int_{\Gamma}G(x,z)\left(u_n(x)-u_n(z)\right)^2 \, d\sigma(z)\, dx \to 0,
		$$
		and
		$$ 
       \int_{\Omega_{n\ell}}\int_{\Omega_{n\ell}} J(x-y)(u_n(y)-v_n(x))^2\, dy\, dx
		\to 0.
		$$
		
		Since 
		$$
		\int_\Omega |u_n(x)|^2 \, dx =1
		$$
 we obtain that $u_n$ is bounded in $L^2(\Omega_\ell)$ and then, using that, 
		$$
		\int_{\Omega_\ell} \frac{|\nabla u_n(x)|^2}{2} \, dx \to 0 
		$$
		we get that there exists a constant $k_1$, such that, along a subsequence, 
		$$
		u_n \to k_1
		$$
		strongly in $H^1(\Omega_\ell)$. Hence, using the trace theorem on $\Gamma$,
		$H^1 (\Omega_\ell) \hookrightarrow  L^2 (\Gamma)$,  
		we obtain
		$$
		u_n \to k_1
		$$
		strongly $L^2(\Gamma)$.

		Now we argue in the nonlocal part $\Omega_{n\ell}$. Since $v_n$ is bounded in $L^2 (\Omega_{n\ell})$ and 
		$$ 
		\int_{\Omega_{n\ell}}\int_{\Omega_{n\ell}} J(x-y)(u_n(y)-u_n(x))^2\,  dy\, dx \to 0
		$$
		we have that (extracting another subsequence if necessary)
		$$
		u_n \rightharpoonup u
		$$
		weakly in $L^2(\Omega_{n\ell})$ and therefore, 
		$$
		\int_{\Omega_{n\ell}}\int_{\Omega_{n\ell}} J(x-y)(u(y)-u(x))^2\,  dy\, dx 
		\leq \lim_{n \to \infty}\frac{1}{2}\int_{\Omega_{n\ell}}\int_{\Omega_{n\ell}} J(x-y)(u_n(y)-u_n(x))^2\,  dy\, dx 
		=0.
		$$
		Hence, we get that $$u \equiv k_2$$ in $\Omega_{n\ell}$ (here we need 
		to assume that $\Omega_{n\ell}$ is $\delta-$connected).
	
From the weak convergence of $u_n$ to $u$ in
 $L^2(\Omega_{n\ell})$ and the strong convergence of $u_n$ to $u$ in
 $L^2(\Gamma)$ we obtain
$$
\int_{\Omega_{nl}}\int_{\Gamma}G(x,z)\left(k_2-k_1\right)^2 \, d\sigma(z)\, dx \leq \lim_{n \to \infty} 
\int_{\Omega_{nl}}\int_{\Gamma}G(x,z)\left(u_n(x)-u_n(z)\right)^2 \, d\sigma(z) \, dx =0.
$$
		
Hence, from the fact that
$$
\int_{\Omega_{nl}}\int_{\Gamma}G(x,z) \, d\sigma(z)\, dx >0,
$$ 
we obtain 
$$
k_1 = k_2.
$$
		
		Now, from the fact that $u_n \in H_{Neu}$, we have that
		$$
\int_{\Omega} u_n (x) \, dx = 0, 
		$$
		and then, passing to the limit, we obtain that
		$$
\int_{\Omega_{\ell}} k_1 + \int_{\Omega_{n\ell}} k_2 = 0.
		$$
		Hence, as $k_1=k_2$ we conclude that
		$$
		k_1 = k_2 = 0.
		$$
		
		Up to now we have that
		$u_n \to 0 $ strongly in $H^1(\Omega_{\ell})$ and $u_n \to 0$ weakly in $L^2(\Omega_{n\ell})$.
		Then, as
		$$
		1= \int_\Omega |u_n(x)|^2 \, dx = \int_{\Omega_{\ell}} |u_n(x)|^2 \, dx + \int_{\Omega_{n\ell}} |u_n(x)|^2 \, dx
		$$
		we get that
		$$
		\int_{\Omega_{n\ell}} |u_n(x)|^2 \, dx \to 1.
		$$
		Now, we go back to 
		$$
\int_{\Omega_{nl}}\int_{\Gamma}G(x,z)\left(u_n(x)-u_n(z)\right)^2 \, d\sigma(z)\, dx 
		\to 0
		$$
		and we obtain
		$$
		\begin{array}{l}
		\displaystyle 0 = \lim_{n \to \infty}
		\int_{\Omega_{nl}}\int_{\Gamma}G(x,z)\left(u_n(x)-u_n(z)\right)^2 \, d\sigma(z) \, dx \\[10pt]
		\qquad \displaystyle
		= \lim_{n \to \infty}
		\int_{\Omega_{nl}}\int_{\Gamma}G(x,z)|u_n(x)|^2 \, d\sigma(z)\,  dx
		\\[10pt]
		\qquad \displaystyle
		\qquad +  \lim_{n \to \infty} \int_{\Omega_{nl}}\int_{\Gamma}G(x,z)|u_n(z)|^2 \, d\sigma(z)\, dx
		\\[10pt]
		\qquad \displaystyle
		\qquad -  \lim_{n \to \infty} 2 \int_{\Omega_{nl}}\int_{\Gamma}G(x,z)u_n(x)u_n(z) \, d\sigma(z)\, dx.
		\end{array}
		$$
		Since $u_n \to 0$ strongly in $L^2(\Gamma)$, we have that 
		$$
		\lim_{n \to \infty}
		 \int_{\Omega_{nl}}\int_{\Gamma}G(x,z)|u_n(z)|^2 \, d\sigma(z) \, dx = 0
		$$
		and 
		$$
		\lim_{n \to \infty} \int_{\Omega_{nl}}\int_{\Gamma}G(x,z)u_n(x)u_n(z) \, d\sigma(z)\,  dx = 0.
		$$
		Therefore, we obtain that 
		\begin{equation}\label{acosta1.22}
		\lim_{n \to \infty} \int_{\Omega_{n\ell}} |u_n(x)|^2 \int_{\Gamma} G(x,z)  \,  dy\, dx = 0.
		\end{equation}
		
		On the other hand we have
		$$
		\begin{array}{l}
		\displaystyle 0 = \lim_{n \to \infty}
		\int_{\Omega_{n\ell}}\int_{\Omega_{n\ell}} J(x-y)(u_n(y)-u_n(x))^2\,  dy\, dx \\[10pt]
		\qquad \displaystyle
		= \lim_{n \to \infty} 2 \int_{\Omega_{n\ell}}\int_{\Omega_{n\ell}} J(x-y) |u_n(x)|^2 \,  dy\, dx
		\\[10pt]
		\qquad \displaystyle
		\qquad -  \lim_{n \to \infty} 2 \int_{\Omega_{n\ell}}\int_{\Omega_{n\ell}} J(x-y) u_n(y) u_n(x) \,  dy\, dx.
		\end{array}
		$$
		Now we use that $v_n \rightharpoonup 0$ weakly in $L^2 (\Omega_{n\ell})$ and that $J$ is continuous
		to obtain 
		$$
		\int_{\Omega_{n\ell}}\int_{\Omega_{n\ell}} J(x-y) u_n(y) dy \to 0
		$$
		strongly in $L^2(\Omega_{n\ell})$ and then we obtain 
		$$
		-  \lim_{n \to \infty} \int_{\Omega_{n\ell}}\int_{\Omega_{n\ell}} J(x-y) u_n(y) u_n(x) \, dy\, dx = 0.
		$$
		Hence, we get that
		\begin{equation}\label{acosta2.22}
		\lim_{n \to \infty} \int_{\Omega_{n\ell}} |v_n(x)|^2 \int_{\Omega_{n\ell}} J(x-y)  \, dy\, dx = 0.
		\end{equation}
		
		From \eqref{acosta1.22} and \eqref{acosta2.22} and the fact that there exists a constant $c$ such that
		$$
		\Big( \int_{\Omega_{n\ell}} J(x-y)  \, dy 
		+ \int_{\Gamma} G(x,z) d\sigma(z) \Big) \geq c >0,
		$$
		we conclude that
		$$
		\begin{array}{l}
		\displaystyle 
		0< c= c \lim_{n\to \infty } \int_{\Omega_{n\ell}} |u_n(x)|^2 dx \\[10pt]
		\displaystyle \qquad 
		\leq \lim_{n \to \infty} \int_{\Omega_{n\ell}} |u_n(x)|^2 \Big( \int_{\Omega_{n\ell}} J(x-y)  \,  dy 
		+ \int_{\Gamma} G(x,z) d\sigma(z) \Big) dx  
		\\[10pt]
		\displaystyle \qquad 
		= \lim_{n \to \infty} \int_{\Omega_{n\ell}} |u_n(x)|^2 \int_{\Omega_{n\ell}} J(x-y)  \,  dy\, dx 
		\\[10pt]
		\displaystyle \qquad \qquad 
		+  \lim_{n \to \infty} \int_{\Omega_{n\ell}} |u_n(x)|^2 \int_{\Gamma} G(x,z) d\sigma(z)  dx = 0
		\end{array}
		$$
		a contradiction. 
	\end{proof}
Similarly to Lemma \ref{lema.Poincare}, Lemma \ref{lema.1.22} does not provide information regarding the constant $c$. 
\begin{lemma}\label{lema.Poincare.con.cte.gamma} 
The optimal 
constant $c>0$ such that 
$$
		\begin{array}{l}
		\displaystyle \int_{\Omega_\ell} \frac{|\nabla u (x)|^2}{2} \,dx 
		+ \frac{1}{2}\int_{\Omega_{n\ell}}\int_{\Omega_{n\ell}} J(x-y)(u(y)-u(x))^2\, dy\, dx\\[10pt]
		\displaystyle \qquad \qquad \qquad \qquad \qquad 
		+ \frac{1}{2}\int_{\Omega_{nl}}\int_{\Gamma}G(x,z)\left(u(x)-u(z)\right)^2 d\sigma(z) dx
		 \geq c \int_\Omega |u (x)|^2 \,dx,
		\end{array}
		$$
for every function $u\in H_{Neu}$ (that is, for $u$ in the natural space with $\int_\Omega u =0$) can be estimated as
$$
c\geq c^* (J, G, \Omega_{\ell},\mathcal{T},\delta,\Gamma_A,B) 
$$
with 
$\Gamma_A\subset \Omega_{\ell}$, $B\subset  \Omega_{n\ell}$ such that $diam(\Gamma_A\cup B)<2\delta$ and $\mathcal{T}$ a $\delta-tree$ structure of $ \Omega_{n\ell}$ with root at $B$.
\end{lemma}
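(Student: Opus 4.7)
The plan is to mirror the argument used for Lemma~\ref{lema.Poincare.con.cte}, with the volumetric anchor set $A\subset \Omega_\ell$ replaced by a relatively open piece $\Gamma_A\subset \Gamma$ of the coupling hypersurface, and Jensen's inequality applied on $\Gamma_A$ instead of on a volume. Concretely, I would first use $(G2)$ together with $(P2)$ to pick $\Gamma_A\subset \Gamma$ of positive $(N-1)$-dimensional measure and an open $B\subset \Omega_{n\ell}$ with $\mathrm{diam}(B)<\delta/2$ and $\mathrm{diam}(\Gamma_A\cup B)<2\delta$, so that $G(x,z)\ge m_G:=\inf_{|x-z|<2\delta}G(x,z)>0$ for every $x\in B$, $z\in \Gamma_A$. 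Then I would build a $\delta$-tree $\mathcal{T}$ on $\Omega_{n\ell}$ rooted at $B$, exactly as in Remark~\ref{rem:trees}. Given $u\in H_{Neu}$, I would choose $r\in \mathbb{R}$ with $\int_{\Gamma_A}(u-r)\,d\sigma=0$ and set $v:=u-r$; since $\int_\Omega u=0$, one has $\int_\Omega u^2\,dx\le \int_\Omega v^2\,dx$, so it is enough to bound $\int_\Omega v^2$.

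The heart of the argument is then a chain of three estimates. By Jensen on $\Gamma_A$,
$$
|v(x)|^2\le \frac{1}{|\Gamma_A|}\int_{\Gamma_A}|v(x)-v(z)|^2\,d\sigma(z),\qquad x\in B,
$$
which, combined with $G\ge m_G$ on $B\times \Gamma_A$, yields
$$
\int_B v^2\,dx \le \frac{1}{m_G\,|\Gamma_A|}\int_{\Omega_{n\ell}}\int_\Gamma G(x,z)(v(x)-v(z))^2\,d\sigma(z)\,dx.
$$
For the local part I would invoke a trace-type Poincar\'e inequality: there exists $\sigma(\Omega_\ell,\Gamma_A)>0$ such that $\int_{\Omega_\ell}v^2\,dx\le \sigma(\Omega_\ell,\Gamma_A)^{-1}\int_{\Omega_\ell}|\nabla v|^2\,dx$ whenever $v\in H^1(\Omega_\ell)$ satisfies $\int_{\Gamma_A}v\,d\sigma=0$. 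For the nonlocal part, Corollary~\ref{coro:solonolocal} applied to $\mathcal{T}$ produces
$$
\int_{\Omega_{n\ell}}v^2\,dx\le C(\mathcal{T},m_J)\Big(\int_B v^2\,dx+\int_{\Omega_{n\ell}}\int_{\Omega_{n\ell}} J(x-y)(v(x)-v(y))^2\,dy\,dx\Big).
$$
Adding these three bounds and absorbing the constants produces the claimed inequality with an explicit constant $c^*=c^*(J,G,\Omega_\ell,\mathcal{T},\delta,\Gamma_A,B)$.

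The main obstacle is precisely the trace-type Poincar\'e inequality for $v\in H^1(\Omega_\ell)$ under the vanishing surface-mean condition $\int_{\Gamma_A}v\,d\sigma=0$: this is a classical fact that follows, by a contradiction argument, from the compactness of the trace embedding $H^1(\Omega_\ell)\hookrightarrow L^2(\Gamma_A)$ combined with connectedness of $\Omega_\ell$, but, just as happens for $\sigma(\Omega_\ell,A)$ in Remark~\ref{rem:poincare.casos}, the constant $\sigma(\Omega_\ell,\Gamma_A)$ is not given by a closed-form expression in general. An averaging argument analogous to the one in Remark~\ref{rem:poincare.casos}, now using the identity $\bar u_{\Omega_\ell}=\int_{\Omega_\ell}(u-\bar u_{\Omega_\ell})(|\Omega_\ell|^{-1}-|\Gamma_A|^{-1}\,d\sigma|_{\Gamma_A}/dx)$ together with the trace inequality, can be used to bound $\sigma(\Omega_\ell,\Gamma_A)$ from below in terms of the classical Neumann Poincar\'e constant $\sigma(\Omega_\ell)$ and the ratio $|\Omega_\ell|/|\Gamma_A|$, keeping the overall constant fully trackable modulo $\sigma(\Omega_\ell)$.
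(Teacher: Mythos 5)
Your proposal is correct and follows essentially the same route as the paper: anchor at a surface piece $\Gamma_A\subset\Gamma$ and a volume piece $B\subset\Omega_{n\ell}$ within $2\delta$ of each other, normalize $v=u-r$ by the vanishing surface-mean $\int_{\Gamma_A}v\,d\sigma=0$, bound $\int_B v^2$ via Jensen and $(G2)$, use a trace-type Poincar\'e constant $\sigma(\Omega_\ell,\Gamma_A)$ for the local part, and propagate through $\Omega_{n\ell}$ with the $\delta$-tree via Corollary~\ref{coro:solonolocal}. Your additional remarks (that $\int_\Omega u^2\le\int_\Omega v^2$ because $\int_\Omega u=0$, and that $\sigma(\Omega_\ell,\Gamma_A)$ can be controlled by a trace-embedding/averaging argument analogous to Remark~\ref{rem:poincare.casos}) correctly fill in details the paper leaves implicit.
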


\begin{proof} The proof uses the same ideas as the ones of Lemma \ref{lema.Poincare.con.cte}. The only significant modifications are the following:  First, from our hypothesis on $G$ and the assumption $\mbox{dist}
(\Gamma_A, \Omega_{n\ell}) <\delta$, there exist two sets 
$\Gamma_A\subset  \Gamma$ and $B \subset  \Omega_{n\ell}$ such that 
$$
|x-y| \leq 2\delta, \qquad \forall x\in \Gamma_A, \,\forall y\in B.
$$
Take $r \in \mathbb{R}$ such that 
$$
\int_{\Gamma_A} (u(z) -r) \, d\sigma(z) =0. 
$$
Then, we have, for $v= u-r$ ­and calling  $|\Gamma_A|$ the $N-1$ dimensional measure of $\Gamma_A$ the following variant of \eqref{eq:intersec},
\begin{equation}
\label{eq:intersec2}
\begin{array}{l}
\displaystyle 
\frac{1}{2}\int_{\Omega_{nl}}\int_{\Gamma}G(x,z)\left(u(x)-u(z)\right)^2 d\sigma(z) dx
 \\[10pt]
\displaystyle \geq \frac{1}{2}\int_{B}\int_{\Gamma_A}G(x,z)\left(u(x)-u(z)\right)^2 d\sigma(z) dx
\\[10pt]
\displaystyle \geq m_G \frac{|\Gamma_A|}{2}\int_{B}|v(x)|^2\,  dx 
\end{array}
\end{equation}
where we are using that 
$$
\int_{B}|v(x)|^2\,  dx=\frac{1}{|\Gamma_A|^2}\int_{B}\left|\int_{\Gamma_A}(v(x)-v(z))\,d\sigma(z)\right|^2\,  dx \le
\frac{1}{|A|}\int_{B}\int_{\Gamma_A}|v(x)-v(z)|^2\,d\sigma(z)\,  dx.
$$
Using now that
 $$
0<\sigma (\Omega_\ell, \Gamma_A) = \inf_{v \in H^1 (\Omega_\ell), \int_{\Gamma_A} v \, d\sigma(z) =0} 
\frac{\displaystyle \int_{\Omega_\ell} |\nabla v|^2 (x) \, dx }{\displaystyle  \int_{\Omega_\ell} v^2 (x) \, dx } ,
$$
for a suitable Poincar\'e constant $\sigma (\Omega_\ell, \Gamma_A) $, the proof follows exactly as in Lemma \ref{lema.Poincare.con.cte}. 
\end{proof}
As before, we are ready to obtain existence and uniqueness of a minimizer of $F_{Neu}(u)$ in $H_{Neu}$.
	
	\begin{theorem} \label{teo.2.22}
Given $f \in L^2 (\Omega)$ with $\int_{\Omega} f =0$ there exists a unique minimizer of $F_{Neu}(u)$ in $H_{Neu}$.
\end{theorem}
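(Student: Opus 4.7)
The plan is to apply the direct method of the calculus of variations, paralleling the proof of Theorem \ref{teo.1} but with the volumetric coupling replaced by the surface coupling. The essential input is Lemma \ref{lema.1.22} (or its quantitative refinement Lemma \ref{lema.Poincare.con.cte.gamma}), which provides the Poincar\'e--Wirtinger type inequality and thus the coercivity of the quadratic part of $F_{Neu}$ on $H_{Neu}$. Combining this inequality with the Cauchy--Schwarz estimate $|\int_\Omega f\,u| \le \|f\|_{L^2(\Omega)}\|u\|_{L^2(\Omega)}$ and Young's inequality, I would obtain a bound of the form
$$
F_{Neu}(u) \ge \tilde c\,\|u\|_{L^2(\Omega)}^2 - C\,\|f\|_{L^2(\Omega)}^2
$$
on $H_{Neu}$, which shows that $F_{Neu}$ is bounded below and coercive.

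Next, I would pick a minimizing sequence $\{u_n\}\subset H_{Neu}$. Coercivity gives that $\{u_n\}$ is bounded in $L^2(\Omega)$ and that $\{\nabla u_n\}$ is bounded in $L^2(\Omega_\ell)$. Passing to a subsequence, one gets $u_n\rightharpoonup u$ weakly in $L^2(\Omega)$ and $u_n|_{\Omega_\ell}\rightharpoonup u|_{\Omega_\ell}$ weakly in $H^1(\Omega_\ell)$; Rellich compactness yields strong convergence in $L^2(\Omega_\ell)$, and by the trace theorem $H^1(\Omega_\ell)\hookrightarrow L^2(\Gamma)$ combined with compactness, $u_n|_\Gamma\to u|_\Gamma$ strongly in $L^2(\Gamma)$. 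The mean-zero constraint $\int_\Omega u_n=0$ passes to the limit, so $u\in H_{Neu}$.

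It remains to verify weak lower semicontinuity of each piece of $F_{Neu}$. The local Dirichlet term is weakly lower semicontinuous in $H^1(\Omega_\ell)$ by standard theory, and the purely nonlocal term $\frac12\int_{\Omega_{n\ell}}\!\int_{\Omega_{n\ell}}J(x-y)(u(y)-u(x))^2\,dy\,dx$ is convex and continuous in $L^2(\Omega_{n\ell})$, hence weakly lower semicontinuous. The mixed coupling term is convex and continuous on $L^2(\Omega_{n\ell})\times L^2(\Gamma)$ and is therefore lower semicontinuous along the combined weak/strong convergence established above; the linear source term passes to the limit by weak convergence. Hence $F_{Neu}(u)\le\liminf_n F_{Neu}(u_n)=\inf_{H_{Neu}} F_{Neu}$ and $u$ is a minimizer. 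Uniqueness is immediate from the strict convexity of $F_{Neu}$, which in turn follows from Lemma \ref{lema.1.22}: if $u_1, u_2$ are two minimizers, $w=u_1-u_2\in H_{Neu}$ makes the quadratic form vanish, so by the Poincar\'e--Wirtinger inequality $w\equiv 0$.

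The main technical point is the lower semicontinuity of the mixed term
$\frac12\int_{\Omega_{n\ell}}\!\int_\Gamma G(x,z)(u(x)-u(z))^2\,d\sigma(z)\,dx$,
because $u$ appears with one argument in the bulk $\Omega_{n\ell}$ (where convergence is only weak) and another on $\Gamma$ (where, via the trace theorem and Rellich, convergence is strong). Expanding the square, the $u_n(z)^2$ piece converges by strong $L^2(\Gamma)$ convergence, the $u_n(x)^2$ piece is handled by weak lower semicontinuity of the nonnegative quadratic form $x\mapsto u(x)^2\int_\Gamma G(x,z)\,d\sigma(z)$, and the cross term converges because $\int_\Gamma G(x,z)u_n(z)\,d\sigma(z)$ converges strongly in $L^2(\Omega_{n\ell})$ while $u_n\rightharpoonup u$ weakly there.
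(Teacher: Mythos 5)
Your proposal is correct and follows essentially the same route as the paper: coercivity from the Poincar\'e--Wirtinger inequality of Lemma \ref{lema.1.22}, the direct method with a minimizing sequence converging weakly in $H^1(\Omega_\ell)\cap L^2(\Omega_{n\ell})$, and uniqueness from strict convexity (which, as you note, is exactly Lemma \ref{lema.1.22} applied to the difference of two minimizers). You have merely spelled out the weak lower-semicontinuity of the mixed coupling term, which the paper leaves implicit, and your treatment of it via the trace-compactness of $H^1(\Omega_\ell)\hookrightarrow L^2(\Gamma)$ and the splitting of the square is the right way to fill that gap.
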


\begin{proof}[Proof of Theorem \ref{teo.2.22}]
The previous lemma implies that $F_{Neu}(u)$ is bounded below and coercive. Hence, existence of a minimizer follows
just considering a minimizing sequence $(u_n)$ and extracting a subsequence
that converges weakly in $H^1(\Omega_{\ell}) \cap L^2 (\Omega_{n\ell})$. 
Uniqueness of a minimizer follows from the strict convexity of the functional $F_{Neu}(u)$. 
\end{proof}

	Now, we find the equations verified by the minimizer.

\begin{lemma} \label{lema.eq.Mod2.Neu}
The minimizer of $F_{Neu}$ is a weak solution to 
 \begin{equation}\label{local-N}
 \left\{
\begin{array}{ll}
 \displaystyle  f(x)  =  \Delta u (x),  \qquad  & x \in \Omega_\ell , \\[10pt]
 \displaystyle   \frac{\partial u}{\partial \eta}(x)  =  0, \qquad & x\in \partial \Omega_l \cap \partial \Omega,  \\[10pt]
 \displaystyle    \frac{\partial u}{\partial \eta}(x)  = \int_{\Omega_{nl}} G(y,x)( u(y)-  u(x)) dy, \qquad & 
 x \in \Gamma, 
    \end{array}
    \right.
    \end{equation}
    and
\begin{equation}\label{nonlocal-N} 
f(x)  = 2 \int_{\Omega_{nl}} J(x-y)\left(u(y)-u(x) \right)dy 
   -  \int_\Gamma G(x,z) ( u(x)-  u(z)) d\sigma (z) ,  \qquad x \in \Omega_{n\ell}. 
     \end{equation} 
     \end{lemma}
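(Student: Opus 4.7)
The plan is to run the standard Euler--Lagrange calculation, in direct parallel with Lemma \ref{ec-dif.1}. Since $u$ minimizes $F_{Neu}$ over $H_{Neu}$, for every admissible perturbation $\varphi \in H_{Neu}$ (that is, with $\varphi|_{\Omega_\ell}\in H^1(\Omega_\ell)$ and $\int_\Omega \varphi = 0$) the first variation $\frac{d}{dt}F_{Neu}(u+t\varphi)|_{t=0}$ vanishes. A direct computation gives the weak identity
\begin{equation*}
\int_{\Omega_\ell}\nabla u\cdot\nabla\varphi\, dx + I_J(u,\varphi) + I_G(u,\varphi) = \int_\Omega f\,\varphi\, dx,
\end{equation*}
where $I_J$ is the double integral on $\Omega_{n\ell}\times\Omega_{n\ell}$ coming from the $J$ term and $I_G$ is the integral on $\Omega_{n\ell}\times\Gamma$ coming from the coupling term. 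The mean-zero constraint on $\varphi$ is harmless: both sides of the identity are invariant under $\varphi\mapsto\varphi+c$ (the right-hand side because $\int_\Omega f = 0$, the left-hand side because constants give no gradient and no differences), so the identity extends to every admissible $\varphi$ without the constraint.

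The next step is to put $I_J$ and $I_G$ in a form in which $\varphi$ appears unpaired. Using the symmetry $J(z)=J(-z)$ and Fubini I rewrite
\begin{equation*}
I_J=-2\int_{\Omega_{n\ell}}\varphi(x)\Bigl(\int_{\Omega_{n\ell}}J(x-y)(u(y)-u(x))\,dy\Bigr)\,dx.
\end{equation*}
For $I_G$ I expand $(\varphi(x)-\varphi(z))$ and apply Fubini to the piece containing $\varphi(z)$, obtaining
\begin{equation*}
I_G = \int_{\Omega_{n\ell}}\varphi(x)\Bigl(\int_\Gamma G(x,z)(u(x)-u(z))\,d\sigma(z)\Bigr)dx - \int_\Gamma\varphi(z)\Bigl(\int_{\Omega_{n\ell}}G(x,z)(u(x)-u(z))\,dx\Bigr)d\sigma(z).
\end{equation*}

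With this decomposition in hand I localize the test functions to extract each equation separately. Applying Green's identity $\int_{\Omega_\ell}\nabla u\cdot\nabla\varphi = -\int_{\Omega_\ell}\Delta u\,\varphi + \int_{\partial\Omega_\ell}\frac{\partial u}{\partial\eta}\,\varphi\, d\sigma$ and then choosing $\varphi$ supported successively in (i) the interior of $\Omega_\ell$, (ii) a neighborhood of $\partial\Omega_\ell\setminus\Gamma$, and (iii) a neighborhood of $\Gamma$ yields, in turn, the interior PDE in $\Omega_\ell$, the homogeneous Neumann condition on $\partial\Omega_\ell\setminus\Gamma$, and the flux balance $\frac{\partial u}{\partial\eta}(z)=\int_{\Omega_{n\ell}}G(y,z)(u(y)-u(z))\,dy$ on $\Gamma$, where the $\Gamma$-piece of $I_G$ absorbs exactly the boundary integral. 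For the nonlocal equation I take $\varphi$ supported in $\Omega_{n\ell}$ (so in particular vanishing on $\Gamma \subset \overline{\Omega_\ell}$), which kills the local gradient term and the surface piece of $I_G$, and then apply the fundamental lemma of the calculus of variations to the resulting pointwise identity on $\Omega_{n\ell}$.

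The only genuinely delicate point is that the surface $\Gamma$ plays a double role: it belongs to the boundary of $\Omega_\ell$ and simultaneously enters as the integration set in the coupling term. One must check that the $\Gamma$-piece of $I_G$ balances exactly the boundary contribution coming from Green's identity, so that the coupling appears as a flux condition on $\Gamma$ rather than as an extra distributional term. Once the three classes of test functions above are used in the right order this balance is automatic, and the rest of the argument is a routine application of the fundamental lemma of the calculus of variations.
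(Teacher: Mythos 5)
Your proof is correct and follows essentially the same route as the paper: compute the first variation, rewrite the nonlocal and coupling terms so that the test function appears unpaired, then read off the weak form. The paper stops at the weak identity and declares the conclusion; you go further and explicitly apply Green's identity and localize test functions in three stages (interior of $\Omega_\ell$, neighborhoods of $\partial\Omega_\ell\setminus\Gamma$, neighborhoods of $\Gamma$) to recover the strong equations, and you also correctly observe that the mean-zero constraint on $\varphi$ may be dropped because $\int_\Omega f=0$ makes both sides invariant under adding constants. These are useful details the paper's terse proof omits, but the underlying argument is the same.
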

     
\begin{proof}
Let $u$ be the minimizer of $F_{Neu}(u)$ in $H_{Neu}$, then for every smooth $\varphi$
with $\int_{\Omega} \varphi =0$ and every $t\in \mathbb{R}$ we have
$$
F_{Neu}(u+t \varphi) - F_{Neu}(u) \geq 0.
$$
Therefore, we get that 
$$
\frac{\partial }{\partial t} F_{Neu}(u+t\varphi) |_{t=0} =0,
$$
that is,
$$
\begin{array}{l}
\displaystyle \int_\Omega f \varphi = \int_{\Omega_\ell} \nabla u \nabla \varphi +
 \int_{\Omega_{n\ell}}\int_{\Omega_{n\ell}} J(x-y)(u(y)-u(x)) (\varphi (y) -\varphi(x)) \, dy\, dx
 \\[10pt]
\displaystyle \qquad \qquad  + \int_{\Omega_{n\ell}}\int_{\Gamma} G(x,y)(u(y)-u(x)) \varphi (y) \,d \sigma(y) \, dx
\\[10pt]
\displaystyle \qquad \qquad  - \int_{\Omega_{n\ell}}\int_{\Gamma} G(x,y)(u(y)-u(x)) \varphi (x) \,d \sigma(y) \, dx.
\end{array}
$$
Using that the kernel $J$ is symmetric we obtain
$$
\begin{array}{l}
\displaystyle \int_\Omega f \varphi = \int_{\Omega_\ell} \nabla u \nabla \varphi - 2
 \int_{\Omega_{n\ell}}\int_{\Omega_{n\ell}} J(x-y)(u(y)-u(x))  \, dy\, \varphi(x) \, dx
 \\[10pt]
\displaystyle \qquad \qquad  - \int_{\Gamma} \int_{\Omega_{n\ell}} G(y,x)(u(y)-u(x)) \, dy \varphi (x) \, d \sigma(x)
\\[10pt]
\displaystyle \qquad \qquad  - \int_{\Omega_{n\ell}}\int_{\Gamma} G(x,y)(u(y)-u(x))  \, d \sigma(y)\, \varphi (x)\, dx,
\end{array}
$$
that is, $u$ is a weak solution to \eqref{local-N} and 
\eqref{nonlocal-N}.
\end{proof}

{\bf Probability interpretation.} A probabilistic interpretation of this model in terms of particle systems runs as follows:
in the local region $\Omega_{\ell}$ the particles move according to Brownian motion (with a reflexion
on the boundary of $\Omega_\ell$, $\partial \Omega_{\ell}$) 
and when they arrive to the surface $\Gamma$ they pass to the nonlocal region to a point 
selected using the kernel $G$; in the nonlocal region
take an exponential clock that controls the jumps of the particles, 
and when the clock rings a new position is sorted according to the kernel 
$J(x-\cdot)$ (for the movements inside $\Omega_{n\ell}$) or according to $G(x,\cdot)$ 
for jumping back to the local region entering at a point in the surface, $\Gamma$.
The total number of particles inside the domain $\Omega$ remains constant in time. 

The minimizer to our functional $F_{Neu}(u)$ gives the stationary distribution of particles ($u$ for the particles
inside the local region $\Omega_{\ell}$ and $v$ for particles in the nonlocal part $\Omega_{n\ell}$) provided
that there is an external source $f$ (that adds particles where $f>0$ and remove particles where $f<0$). 

Notice that here the coupling term appears as a flux boundary condition for the local part of the problem.

\section{Singular kernels} \label{sect-sing}

First, let us deal with
\begin{equation} \label{energy.sing.678}
\begin{array}{l}
\displaystyle 
E_{Neu}(u):=\int_{\Omega_\ell} \frac{|\nabla u (x)|^2}{2} \, dx 
+ \frac{1}{2}\int_{\Omega_{n\ell}}\int_{\Omega_{n\ell}}  \frac{C}{|x-y|^{N+2s}} |u(y)-u(x)|^2\, dy \, dx
\\[10pt]
\displaystyle \qquad\qquad  \qquad
+ \frac{1}{2}\int_{\Omega_{n\ell}}\int_{\Omega_\ell} G(x,y)|u(y)-u(x)|^2\, dy\,  dx
- \int_\Omega f (x) u (x) \, dx.
\end{array}
\end{equation}
We look for minimizers in the space
$$
H_{Neu} = \left\{ u: \, u|_{\Omega_\ell}  \in H^1 (\Omega_\ell),
u|_{\Omega_{n\ell}} \in H^s (\Omega_{n\ell}), \ \int_{\Omega} u =0 \right\}.
$$

In fact, now the key lemma (the Poincar\'e-Wirtinger type inequality) is simpler. 

\begin{lemma}\label{lema.Poincare.8888} There exists $c>0$ such that 
$$
\begin{array}{l}
\displaystyle
\int_{\Omega_\ell} |\nabla u|^2 + \frac{1}{2 }\int_{\Omega_{n\ell}}\int_{\Omega_{n\ell}}  \frac{C}{|x-y|^{N+2s}} 
(u(y)-u(x))^2\, dy\, dx
+ \frac{1}{2}\int_{\Omega_{n\ell}}\int_{\Omega_\ell} G(x,y)|u(y)-u(x)|^2\, dy\,  dx \\[10pt]
\qquad \qquad \qquad \displaystyle \geq \displaystyle c\int_{\Omega}u^2 (x) \, dx
\end{array}
$$
for every function $u\in H_{Neu}$.
\end{lemma}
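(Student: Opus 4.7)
The plan is to mimic the contradiction argument of Lemma \ref{lema.Poincare}, but to exploit the fact that the fractional seminorm
$$
[u]_{H^s(\Omega_{n\ell})}^2 = \int_{\Omega_{n\ell}}\int_{\Omega_{n\ell}} \frac{|u(x)-u(y)|^2}{|x-y|^{N+2s}}\, dy\, dx
$$
controls $\|u\|_{H^s(\Omega_{n\ell})}$ modulo constants, so that Rellich--Kondrachov on both the local and nonlocal sides yields strong $L^2$ convergence directly, bypassing the propagation argument (Lemma \ref{lema:contagio_escalar}) that was needed in the $L^1$-kernel case.

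Assume for contradiction that there is a sequence $u_n \in H_{Neu}$ with $\int_\Omega u_n = 0$, $\|u_n\|_{L^2(\Omega)}^2 = 1$, and each of the three energy terms on the left-hand side tending to zero. Since $\int_{\Omega_\ell} |\nabla u_n|^2 \to 0$ and $\int_{\Omega_\ell} u_n^2 \le 1$, the restriction $u_n|_{\Omega_\ell}$ is bounded in $H^1(\Omega_\ell)$; because $\Omega_\ell$ is Lipschitz (hypothesis~(1)), a subsequence converges strongly in $L^2(\Omega_\ell)$ and weakly in $H^1(\Omega_\ell)$, and combined with $\|\nabla u_n\|_{L^2(\Omega_\ell)} \to 0$ the limit must be a constant $k_1$, with convergence in fact strong in $H^1(\Omega_\ell)$. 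Similarly, $u_n|_{\Omega_{n\ell}}$ is bounded in $H^s(\Omega_{n\ell})$, so the compact embedding $H^s(\Omega_{n\ell}) \hookrightarrow L^2(\Omega_{n\ell})$ (assuming sufficient regularity on $\Omega_{n\ell}$, which I would add to the standing hypotheses if not already implicit) produces a subsequence converging strongly in $L^2(\Omega_{n\ell})$ to a limit $u_\infty$. Weak lower semicontinuity of the fractional seminorm, together with its vanishing along the sequence, forces $[u_\infty]_{H^s(\Omega_{n\ell})}=0$; by Fubini this means $u_\infty \equiv k_2$ a.e.\ on $\Omega_{n\ell}$ for some constant $k_2$.

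Next, weak lower semicontinuity of the nonnegative coupling term yields
$$
\int_{\Omega_{n\ell}}\int_{\Omega_\ell} G(x,y)\, |k_1-k_2|^2\, dy\, dx \leq \liminf_n \int_{\Omega_{n\ell}}\int_{\Omega_\ell} G(x,y)\, |u_n(y)-u_n(x)|^2\, dy\, dx = 0,
$$
and hypotheses $(G1)$ and $(P1)$ provide a subset of $\Omega_\ell \times \Omega_{n\ell}$ of positive product measure on which $G \geq C > 0$, so $k_1 = k_2$. Passing to the limit in the mean-zero constraint $\int_\Omega u_n = 0$ yields $(|\Omega_\ell|+|\Omega_{n\ell}|)\, k_1 = 0$, hence $k_1 = k_2 = 0$; but then $\|u_n\|_{L^2(\Omega)} \to 0$, contradicting $\|u_n\|_{L^2(\Omega)}^2 = 1$.

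The main obstacle, if any, is purely at the level of verifying the compact embedding $H^s(\Omega_{n\ell}) \hookrightarrow L^2(\Omega_{n\ell})$ under the stated geometric hypotheses on $\Omega_{n\ell}$; once that is in place, no analogue of Lemma \ref{lema:contagio_escalar} or of the tree-structure machinery of Remark \ref{rem:trees} is required, so the argument is strictly simpler than its volumetric $L^1$ counterpart. A trade-off is that this contradiction scheme is nonconstructive, so unlike in Lemma \ref{lema.Poincare.con.cte} no explicit lower bound on the Poincar\'e constant in terms of $\delta$, $m_J$, and $m_G$ is produced.
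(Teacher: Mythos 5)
Your proof is correct and follows essentially the same contradiction argument the paper gives: extract a subsequence, use the compact embeddings $H^1(\Omega_\ell)\hookrightarrow L^2(\Omega_\ell)$ and $H^s(\Omega_{n\ell})\hookrightarrow L^2(\Omega_{n\ell})$ to get strong $L^2$ limits which must be constants $k_1$, $k_2$, then use the vanishing coupling term and hypothesis $(G1)$, $(P1)$ to force $k_1=k_2$, the mean-zero constraint to force $k_1=k_2=0$, and finally contradict the normalization. You are a bit more careful than the paper in two places — explicitly invoking weak lower semicontinuity to conclude the $L^2(\Omega_{n\ell})$ limit has zero fractional seminorm, and flagging that compactness of $H^s(\Omega_{n\ell})\hookrightarrow L^2(\Omega_{n\ell})$ requires some boundary regularity on $\Omega_{n\ell}$ (e.g.\ Lipschitz or extension domain), a hypothesis the paper leaves implicit — but the route and the key steps coincide.
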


\begin{proof} 
As before, we argue by contradiction. 
Assume that there is a sequence $u_n$ such that
\begin{equation} \label{eq.gradientes.99}
\int_{\Omega_\ell} |\nabla u_n|^2 (x) \, dx \to 0
\end{equation}
\begin{equation} \label{eq.J.99}
\frac{1}{2 }\int_{\Omega_{n\ell}}\int_{\Omega_{n\ell}}  \frac{C}{|x-y|^{N+2s}} |u_n(y)-u_n(x)|^2\, dy\, dx \to 0,
\end{equation}
\begin{equation} \label{eq.G.99}
\frac{1}{2}\int_{\Omega_{n\ell}}\int_{\Omega_\ell} G(x,y)|u_n(y)-u_n(x)|^2\, dy\,  dx \to 0,
\end{equation}
\begin{equation} \label{eq.=1.99}
\int_{\Omega}|u_n|^2 (x) \, dx = 1
\end{equation}
and 
\begin{equation} \label{eq.int-0.99}
\int_\Omega u_n (x) \, dx =0.
\end{equation}

Since the $L^2$ norm is bounded we can extract a subsequence such that
$$
u_n \rightharpoonup u \qquad \mbox{weakly in } H^1 (\Omega_{\ell}),  
$$
and 
$$
u_n \rightharpoonup u \qquad \mbox{weakly in } H^s (\Omega_{n\ell}).  
$$
then, from the compact embeddings $H^1 (\Omega_\ell)
\hookrightarrow L^2 (\Omega_\ell)$ and $H^s (\Omega_\ell)
\hookrightarrow L^2 (\Omega_\ell)$ and 
\eqref{eq.gradientes} and \eqref{eq.J.99} we get that there is are constants $k_1$, $k_2$ such that
$$
u_n \to k_1 \qquad \mbox{strongly in } L^2 (\Omega_{\ell})  
$$
and
$$
u_n \to k_2 \qquad \mbox{strongly in } L^2 (\Omega_{n\ell}).  
$$
From \eqref{eq.G} we conclude that 
$$
\frac{1}{2}\int_{\Omega_{n\ell}}\int_{\Omega_\ell} G(x,y)|k_1-k_2|^2\, dy\,  dx
\leq \lim_{n \to \infty}  \frac{1}{2}\int_{\Omega_{n\ell}}\int_{\Omega_\ell} G(x,y)|u_n(y)-u_n(x)|^2\, dy\,  dx =0,
$$
and hence
$$
k_1 = k_2. 
$$
Now, from \eqref{eq.int-0}, we obtain
$$
k_1=k_2=0. 
$$
Hence we get 
$$
\int_{\Omega}|u_n|^2 (x) \, dx
= \int_{\Omega_{\ell}}|u_n|^2 (x) \, dx+  \int_{\Omega_{n\ell}}|u_n|^2 (x) \, dx
\to 0.
$$
This contradicts that
$$
\int_{\Omega}|u_n|^2 (x) \, dx \to 1,
$$
and ends the proof.
\end{proof}

Now we are ready to proceed with the proof of Theorem \ref{teo.1.intro.sing}.

\begin{proof}[Proof of Theorem \ref{teo.1.intro.sing}] From the previous Poincar\'e-Wirtinger type inequality we obtain 
$$ 
E_{Neu} (u) \geq c \int_{\Omega} u^2 (x) \,dx - \int_\Omega f(x)u(x) \,dx
$$
from where it follows that $E_{Neu} (u)$ is bounded below and coercive in $H$. Hence, existence of a minimizer follows
by the direct method of calculus of variations. Just take a minimizing sequence $u_n$ and extract a subsequence
that converges weakly in $L^2 (\Omega)$. Then, using the lower semicontinuity of the seminorms
we obtain that $u\in H_{Neu}$ and that $u$ is a minimizer,
$$E_{Neu}(u) = \min_{v\in H} E_{Neu}(v).$$ 

Uniqueness of minimizers follows from the strict convexity of the functional $E_{Neu} (u)$. 

Associated with this energy we have an equation in $\Omega$ that can be obtained exactly as before.
\end{proof}

Now let us move to mixed couplings, 
\begin{equation} \label{energy.909.sing.678}
\begin{array}{l}
    F_{Neu} (u)  \displaystyle:=\frac{1}{2}\int_{\Omega_{l}} | \nabla u |^2 dx + 
    \frac{1}{2}\int_{\Omega_{nl}}\int_{\Omega_{nl}} \frac{C}{|x-y|^{N+2s}}
    \left(u(y)-u(x)\right)^2 dydx \\[10pt]
    \displaystyle \quad \qquad \qquad\qquad \qquad + \frac{1}{2}\int_{\Omega_{nl}}\int_{\Gamma}G(x,z)\left(u(x)-u(z)\right)^2 d\sigma(z) dx
    - \int_\Omega f (x) u (x) \,dx.
\end{array}
\end{equation}

The key lemma follows as before. 

\begin{lemma}\label{lema.Poincare.sing2} There exists $c>0$ such that 
$$
\begin{array}{l}
\displaystyle
\int_{\Omega_\ell} |\nabla u|^2 + \frac{1}{2 }\int_{\Omega_{n\ell}}\int_{\Omega_{n\ell}}  \frac{C}{|x-y|^{N+2s}} 
(u(y)-u(x))^2 dy dx
+ \frac{1}{2}\int_{\Omega_{n\ell}}\int_{\Gamma} G(x,y)|u(y)-u(x)|^2 d\sigma(y) dx \\[10pt]
\qquad \qquad \qquad \displaystyle \geq \displaystyle c\int_{\Omega}u^2 (x) \, dx
\end{array}
$$
for every function $u\in H_{Neu}$.
\end{lemma}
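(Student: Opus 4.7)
The plan is to mimic the contradiction argument used for Lemma \ref{lema.Poincare.8888}, adapting it to the surface coupling term. Suppose the inequality fails: there is a sequence $u_n\in H_{Neu}$ with $\int_\Omega u_n=0$, $\int_\Omega u_n^2\,dx=1$ and
$$
\int_{\Omega_\ell}|\nabla u_n|^2\,dx+\frac{1}{2}\int_{\Omega_{n\ell}}\int_{\Omega_{n\ell}}\frac{C}{|x-y|^{N+2s}}(u_n(y)-u_n(x))^2\,dy\,dx+\frac{1}{2}\int_{\Omega_{n\ell}}\int_{\Gamma}G(x,z)|u_n(x)-u_n(z)|^2\,d\sigma(z)\,dx\to 0.
$$
All three terms tend to zero separately, and $(u_n)$ is bounded in $H^1(\Omega_\ell)$ and in $H^s(\Omega_{n\ell})$.

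Using the compact embeddings $H^1(\Omega_\ell)\hookrightarrow L^2(\Omega_\ell)$ and $H^s(\Omega_{n\ell})\hookrightarrow L^2(\Omega_{n\ell})$, along a subsequence I obtain $u_n\to u$ strongly in $L^2(\Omega_\ell)$ and $u_n\to v$ strongly in $L^2(\Omega_{n\ell})$, with weak convergence in the fractional spaces. Since the local Dirichlet energy and the fractional Gagliardo seminorm on $\Omega_{n\ell}$ both vanish in the limit, $u$ must be a constant $k_1$ on $\Omega_\ell$ (because $\nabla u=0$ on the connected set $\Omega_\ell$) and, by the standard Poincaré--Wirtinger inequality associated with $H^s(\Omega_{n\ell})$ on a $\delta$-connected domain, $v$ must be a constant $k_2$ on $\Omega_{n\ell}$.

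To couple the two constants I exploit the surface coupling term together with the trace theorem. The trace operator $H^1(\Omega_\ell)\to L^2(\Gamma)$ is compact, so $u_n|_\Gamma\to k_1$ strongly in $L^2(\Gamma)$. Combined with the strong convergence $u_n\to k_2$ in $L^2(\Omega_{n\ell})$ and the nonnegativity of $G$, Fatou's lemma gives
$$
\int_{\Omega_{n\ell}}\int_{\Gamma}G(x,z)|k_2-k_1|^2\,d\sigma(z)\,dx\le \liminf_{n\to\infty}\int_{\Omega_{n\ell}}\int_{\Gamma}G(x,z)|u_n(x)-u_n(z)|^2\,d\sigma(z)\,dx=0.
$$
By assumptions $(G2)$ and $(P2)$, $G$ is strictly positive on a set of positive $(N{+}(N{-}1))$-dimensional measure, forcing $k_1=k_2$. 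The zero mean constraint then gives $(|\Omega_\ell|+|\Omega_{n\ell}|)k_1=0$, hence $k_1=k_2=0$. But this contradicts $\int_\Omega u_n^2\,dx=1$, since $u_n\to 0$ strongly in $L^2(\Omega_\ell)\cup L^2(\Omega_{n\ell})=L^2(\Omega)$.

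The main technical point is the passage to the limit in the mixed bulk--surface term, which is available precisely because the trace map on $H^1(\Omega_\ell)$ is compact into $L^2(\Gamma)$ and the bulk convergence in $L^2(\Omega_{n\ell})$ is strong (thanks to the compact embedding from $H^s(\Omega_{n\ell})$). Unlike the earlier regular-kernel case, no $\delta$-tree or iterated propagation of strong convergence through $\Omega_{n\ell}$ is needed here, because fractional compactness already delivers strong convergence of $(u_n)$ on all of $\Omega_{n\ell}$ in one step.
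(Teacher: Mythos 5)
Your proof is correct and follows essentially the same contradiction argument as the paper: use compactness of $H^1(\Omega_\ell)\hookrightarrow L^2(\Omega_\ell)$ and $H^s(\Omega_{n\ell})\hookrightarrow L^2(\Omega_{n\ell})$ to extract strong $L^2$-limits, show these limits are constants $k_1,k_2$, pass to the limit in the bulk--surface coupling term (via the trace theorem) to force $k_1=k_2$, then use the zero-mean constraint to get $k_1=k_2=0$, contradicting the $L^2$-normalization. One small inaccuracy worth noting: the fact that the weak limit on $\Omega_{n\ell}$ is a constant follows simply because the Gagliardo seminorm vanishes in the limit and the singular kernel $|x-y|^{-(N+2s)}$ is strictly positive for all pairs $x\neq y$ (so no $\delta$-connectedness argument is actually needed in the fractional case), rather than from a Poincar\'e--Wirtinger inequality on a $\delta$-connected domain.
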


\begin{proof} 
The proof runs as before, arguing by contradiction. 
The only minor point is that, after proving that there is a subsequence $u_n$ such that 
$$
u_n \to k_1 \qquad \mbox{strongly in } L^2 (\Omega_{\ell})
$$
and
$$
u_n \to k_2 \qquad \mbox{strongly in } L^2 (\Omega_{n\ell}) 
$$
 we have to use that 
$$
\frac{1}{2}\int_{\Omega_{n\ell}}\int_{\Gamma} G(x,y)|k_1-k_2|^2\, d\sigma (y) \,  dx
\leq \lim_{n \to \infty}  \frac{1}{2}\int_{\Omega_{n\ell}}\int_{\Gamma} G(x,y)|u_n(y)-u_n(x)|^2\, d\sigma (y) \,  dx =0,
$$
to obtain
$$
k_1 = k_2. 
$$
The rest of the arguments until reaching a contradiction are exactly as before. 
\end{proof}

Now we are ready to proceed with the proof of Theorem \ref{teo.1}.

\begin{proof}[Proof of Theorem \ref{teo.1}] From the previous Poincar\'e-Wirtinger type inequality we obtain 
$$ 
E_{Neu} (u) \geq c \int_{\Omega} u^2 (x) \,dx - \int_\Omega f(x)u(x) \,dx
$$
from where it follows that $E_{Neu} (u)$ is bounded below and coercive in $H$. Hence, existence of a minimizer follows
by the direct method of calculus of variations. Just take a minimizing sequence $u_n$ and extract a subsequence
that converges weakly in $L^2 (\Omega)$. Then, using the lower semicontinuity of the seminorms
we obtain that $u\in H_{Neu}$ and that $u$ is a minimizer,
$$E_{Neu}(u) = \min_{v\in H} E_{Neu}(v).$$ 

Uniqueness of minimizers follows from the strict convexity of the functional $E_{Neu} (u)$. 

Associated with this energy we have an equation in $\Omega$ that can be obtained exactly as before.
\end{proof}

\medskip


\bibliographystyle{plain}

\begin{thebibliography}{99}


\bibitem{ABR} Acosta, G.; Bersetche, F. M.; Rossi, J. D. \emph{Local and nonlocal energy-based coupling models}. Preprint. ArXiv:2107.05083.

\bibitem{Peri1} Azdoud, Y.; Han, F.; Lubineau, G. {\it A morphing framework to couple non-local and local anisotropic
continua}. Inter. J. Solids Structures 50(9),  (2013), 1332--1341.

\bibitem{ElLibro}
   Andreu-Vaillo, F.;  Toledo-Melero, J.; Mazon, J. M.;  
  Rossi, J. D.
\newblock \emph{Nonlocal diffusion problems}.
\newblock Number 165. American Mathematical Soc., 2010.


\bibitem{Peri2} Badia, S.; Bochev, P.; Lehoucq, R.; Parks, M.; Fish, J.; Nuggehally, M.A.; Gunzburger, M. {\it A forcebased
blending model for atomistic-to-continuum coupling}. Inter. J. Multiscale
Comput. Engineering, 5(5), (2007), 387--406.

\bibitem{Peri3} Badia, S.; Parks, M.; Bochev, P.; Gunzburger, M.; Lehoucq, R. {\it On atomistic-to-continuum coupling
by blending}. Multiscale Modeling Simulation, 7(1), (2008), 381--406.


\bibitem{BCh} Bates, P.; Chmaj, A. \emph{An integrodifferential
model for phase transitions: stationary solutions in higher
dimensions}. J. Statist. Phys. 95 (1999), no.\,5--6, 1119--1139.

\bibitem{Bere} Berestycki, H., Coulon, A.-Ch.; Roquejoffre, J-M.; Rossi, L. 
{ \it The effect of a line with nonlocal diffusion on Fisher-KPP propagation.} Math. Models Meth.  Appl. 
Sciences, 25.13, (2015), 2519--2562.


\bibitem{brezis}
Brezis, H. \emph{Functional analysis, Sobolev spaces and partial differential equations}. Springer Science \& Business Media, 2010.



\bibitem{CaRo} Capanna, M.; Rossi, J. D. {\it Mixing local and nonlocal evolution equations.} Preprint.


\bibitem{CF} Carrillo, C.; Fife, P. \emph{Spatial effects in discrete generation population models}. J. Math. Biol. 50 (2005), no.\,2, 161--188.


\bibitem{ChChRo} Chasseigne, E.; Chaves, M.; Rossi, J.\,D. \emph{Asymptotic behavior for nonlocal diffusion equations}. J. Math. Pures Appl. (9) 86 (2006), no.\,3, 271--291.


\bibitem{CERW} Cort\'azar, C.; M. Elgueta, M.; Rossi, J.\,D.; Wolanski, N. \emph{Boundary fluxes for non-local diffusion}.   J. Differential Equations 234 (2007), no.\,2, 360--390.

\bibitem{Cortazar-Elgueta-Rossi-Wolanski}  Cort\'azar, C.; M. Elgueta, M.; Rossi, J.\,D.; Wolanski, N. \emph{How to approximate the heat equation with Neumann boundary conditions by nonlocal diffusion problems}. Arch. Ration. Mech. Anal. 187 (2008), no.\,1, 137--156.


\bibitem{delia2} D'Elia, M.; Perego, M.; Bochev, P.; Littlewood, D. \emph{A coupling strategy for nonlocal and local diffusion models with mixed volume constraints and boundary conditions}. Comput. Math. Appl. 71 (2016), no.\,11, 2218--2230.

\bibitem{delia3} D'Elia, M.; Ridzal, D.; Peterson, K.\,J.; Bochev, P.; Shashkov, M. \emph{Optimization-based mesh correction with volume and convexity constraints}. J. Comput. Phys. 313 (2016), 455--477.


\bibitem{delia}
D'Elia, M.; Bochev, P. \emph{Formulation, analysis and computation of an optimization-based local-to-nonlocal coupling method}. arXiv:1910.11214.

\bibitem{SUR} 
D'Elia, M.; Li, X.; Seleson, P.; Tian, X.; Yu, Y. {\it A review of Local-to-Nonlocal coupling methods in nonlocal diffusion and nonlocal mechanics.}
arXiv:1912.06668.



\bibitem{DiPaola} Di Paola, M.; Giuseppe F.; M. Zingales. { \it Physically-based approach to the mechanics of strong non-local linear elasticity theory.} Journal of Elasticity 97.2 (2009): 103--130.


\bibitem{Du} Du, Q.; Li, X.\,H.; Lu, J.; Tian, X. \emph{A quasi-nonlocal coupling method for nonlocal and local diffusion models}. SIAM J. Numer. Anal. 56 (2018), no.\,3, 1386--1404.

\bibitem{evans}
Evans, L.C. \emph{Partial Differential Equations}. Second edition. Graduate Studies in Mathematics, 19. American Mathematical Society, Providence, RI, 2010.


\bibitem{F} Fife, P. \emph{Some nonclassical trends in parabolic and parabolic-like evolutions}. In \lq\lq Trends in nonlinear analysis'', 153--191, Springer, Berlin, 2003.


\bibitem{Gal} Gal, C. G.; Warma, M. {\it Nonlocal transmission problems with fractional diffusion and boundary conditions on non-smooth interfaces.} Communications in Partial Differential Equations, 42(4) (2017), 579--625.


\bibitem{GQR} G\'{a}rriz, A.; Quir\'os, F.; Rossi, J. D. {\it Coupling local and nonlocal evolution equations.} Calc. Var. PDE, 
59(4), article 117, (2020), 1--25.


\bibitem{Han} Han, F., Gilles L.; { \it Coupling of nonlocal and local continuum models by the Arlequin approach.} Inter. Journal Numerical Meth. 
Engineering 89.6 (2012): 671--685.


\bibitem{Hutson} Hutson, V.; Martinez, S.; Mischaikow, K.; Vickers, G. T. {\it The evolution of dispersal}. J. Math. Biology, 47(6), (2003), 483--517.


\bibitem{Kri} Kriventsov, D. {\it Regularity for a local-nonlocal transmission problem}. Arch. Ration.
Mech. Anal. 217 (2015), 1103--1195.

\bibitem{MenDu} 
Mengesha, T. and Du, Q., {\it The bond-based peridynamic system with Dirichlet-type volume constraint}, Proc. Roy. Soc. Edinburgh Sect. A, (2014), Nro. 1, p.p. 161-186.

\bibitem{PW} Payne, L.E., Weinberger, H.F. {\it An optimal Poincaré inequality for convex domains.} Arch. Rational Mech. Anal. 5, 286–292 (1960).

\bibitem{santos2020} dos Santos, B. C.; Oliva, S. M.; Rossi, J. D.
\newblock \emph{A local/nonlocal diffusion model}. To appear in Applicable Analysis.
\newblock arXiv preprint: 2003.02015, 2020.


\bibitem{Sel} Seleson, P., Samir B., Serge P.; 
{ \it A force-based coupling scheme for peridynamics and classical elasticity.} Computational Materials Science 66 (2013), 34--49.


\bibitem{Sel2} Seleson, P.; Gunzburger, M. {\it Bridging methods for atomistic-to-continuum coupling and their implementation}.
Comm. Comput. Physics, 7(4), (2010), 831. 

\bibitem{Sel3} Seleson, P.; Gunzburger, M.; Parks, M.L. {\it Interface problems in nonlocal diffusion and sharp transitions
between local and nonlocal domains}. Comput. Methods Appl. Mech. Engineering,
266, (2013), 185--204.


\bibitem{Sil} Silling, S. A.; {\it Reformulation of elasticity theory for discontinuities and long-range forces}. Jour. Mech. Physics Solids, 48(1), 
 2000, 175---209.
 

\bibitem{Sil1} Silling, S. A.; Lehoucq, R. B.; { \it Peridynamic theory of solid mechanics}. In Advances in applied mechanics (Vol. 44, pp. 73-168). Elsevier, 2010.


\bibitem{Sil2} S. A. Silling, M. Epton, O. Weckner, J. Xu, 
and E. Askari. {\it Peridynamic
states and constitutive modeling.} Journal of Elasticity, 
88:151-184, 2007.

\bibitem{Strick} Strickland, C.; Gerhard D.; Patrick D. S.; { \it Modeling the presence probability of invasive plant species with nonlocal dispersal.} Jour.
Math. Biology 69.2 (2014), 267--294.


\bibitem{W} Wang, X. \emph{Metastability and stability of patterns in a convolution model for phase transitions}.
J. Differential Equations 183 (2002), no.\,2, 434--461.

\bibitem{Z} Zhang, L. \emph{Existence, uniqueness and exponential stability of traveling wave solutions of some integral differential equations
arising from neuronal networks}. J. Differential Equations 197 (2004), no.\,1, 162--196.


\end{thebibliography}

\end{document}